\newcommand{\eqdef}{\stackrel{\scriptscriptstyle\rm def}{=}}
\def\loc{{\operatorname{loc}}}
\def\dist{{\operatorname{dist}}}
\def\ZZ {{\mathbb Z}}
\def\NN {{\mathbb N}}
\def\RR {{\mathbb R}}
\def\CC {{\mathbb C}}
\def\Si{\Sigma}
\def\La{\Lambda}
\def\De{\Delta}
\def\la{\lambda}
\def\de{\delta}
\def\ve{\varepsilon}
\def\eps{\varepsilon}
\def\vro{\varrho}
\def\cA{{\mathcal A}}  \def\cG{{\mathcal G}}  
\def\cB{{\mathcal B}}   \def\cN{{\mathcal N}} 
\def\cC{{\mathcal C}}    \def\cU{{\mathcal U}}
   \def\cP{{\mathcal P}} \def\cV{{\mathcal V}}
    \def\cW{{\mathcal W}}
   \def\cR{{\mathcal R}} 
\def\fP{{\mathfrak{P}}}
\def\fQ{{\mathfrak{P}}}
\def\fV{{\mathfrak{V}}}
\def\fC{{\mathfrak{U}}}
\def\diff{{\mbox{{\rm Diff}}^1(M)}}
\def\diffr{{\mbox{{\rm Diff}}^r(M)}}
\def\pes{\varepsilon}
\newtheorem{theo}{Theorem}
\newtheorem{coro}[theo]{Corollary}
\newtheorem{theor}{Theorem}[section]
\newtheorem{lemm}[theor]{Lemma}
\newtheorem{clai}[theor]{Claim}
\newtheorem{corol}[theor]{Corollary}
\newtheorem{prop}[theor]{Proposition}
\newtheorem{conj}{Conjecture}
\newtheorem{ques}{Question}
\newtheorem{rema}[theor]{Remark}
\newtheorem{defi}[theor]{Definition}
\begin{document}


\title[Non-domination, cycles, and self-replication]{Internal perturbations of homoclinic classes:\\
non-domination, cycles, and self-replication}

\author{Ch. Bonatti}
\address{Institut de Math\'ematiques de Bourgogne, BP 47 870, 1078 Dijon Cedex, France}
\email{bonatti@u-bourgogne.fr}

\author{S. Crovisier}
\address{Institut Galil\'ee, Universit\'e Paris 13, Avenue J.-B. Cl\'ement, 93430 Villetaneuse, France}
\email{crovisie@math.univ-paris13.fr}

\author{L. J. D\'iaz}
\address{Depto. Matem\'atica, PUC-Rio, Marqu\^es de S. Vicente
225, 22453-900 Rio de Janeiro RJ,  Brazil}
\email{lodiaz@mat.puc-rio.br}

\author{N. Gourmelon}
\address{Institut de Math\'ematiques de Bordeaux, Universit\'e de Bordeaux, 351, cours de la Lib\'eration, F33405 Talance Cedex France}
\email{nicholas.gourmelon@math.u-bordeaux1.fr}

\thanks{This paper was partially supported by CNPq, Faperj, Pronex
(Brazil), the ANR project DynNonHyp BLAN08-2$_-$313375,  and the
Agreement Brazil-France in Mathematics. LJD thanks the kind
hospitality of LAGA at Univ. Paris 13 and the Institut de
Math\'ematiques of Universit\'e de Bourgogne. SC and LJD thanks
the financial support and the warm hospitality of Institute
Mittag-Leffer. We are grateful for comments by K. Shinohara that allowed us to improve the
presentation of Section~\ref{s.viral}.}

\subjclass[2000]{Primary:37C29, 37D20, 37D30} \keywords{chain
recurrence class, dominated splitting, heterodimensional cycle,
homoclinic class, homoclinic tangency, linear cocycle,
$C^{1}$-robustness, wild dynamics}

\date{\today}


\begin{abstract}
Conditions are provided under which lack of domination of a
homoclinic class yields robust heterodimensional cycles. Moreover,
so-called viral homoclinic classes are studied. Viral classes have
the property of generating copies of themselves  producing wild
dynamics (systems with infinitely many homoclinic classes with
some persistence). Such wild dynamics also exhibits uncountably
many aperiodic chain recurrence classes. A scenario (related with
non-dominated dynamics) is presented where viral homoclinic
classes occur.

A key ingredient are adapted perturbations of  a diffeomorphism
along a periodic  orbit. Such perturbations preserve certain
homoclinic relations and prescribed dynamical properties of a
homoclinic class.


\end{abstract}

\maketitle


\section{Introduction}\label{s.introd}

There are two sort of cycles associated to periodic saddles that
are the main mechanism for breaking hyperbolicity of  systems:

\smallskip

\noindent $\bullet$
 {\bf Homoclinic tangencies:} A diffeomorphism $f$ has a
  {\emph{homoclinic tangency}} associated to a
 transitive hyperbolic set $K$ if there are points $X$ and $Y$ in $K$ whose stable and unstable manifolds have some
non-transverse intersection. The homoclinic tangency is
{\emph{$C^r$-robust}} if there is a $C^r$-neighborhood $\cN$ of
$f$ such that the hyperbolic continuation $K_g$ of $K$ has a
homoclinic tangency for every $g\in \cN$.

\smallskip

\noindent  $\bullet$ {\bf{Heterodimensional cycles:}} A
diffeomorphism $f$ has a \emph{heterodimensional cycle} associated
to a pair of transitive hyperbolic sets $K$ and $L$ of $f$ if
their stable bundles have different dimensions and their invariant
manifolds meet cyclically, that is, $W^s(K) \cap W^u (L)\ne
\emptyset$ and $W^u(K) \cap W^s (L)\ne \emptyset$. The
heterodimensional cycle is {\emph{$C^r$-robust}} if there is a
$C^r$-neighborhood $\cV$ of $f$ such that the continuations $K_g$
and $L_g$ of $K$ and $L$ have a heterodimensional cycle for every
$g\in \cV$.

Given a closed manifold $M$ consider the space $\diffr$ of
$C^r$-diffeo\-mor\-phisms defined on $M$ endowed with the usual
$C^r$-topology. There is the following conjecture about
hyperbolicity and cycles:

\begin{conj}[Palis' density conjecture, \cite{P:00}]
Any diffeomorphism $f\in \diffr$, $r\ge 1$, can be
$C^r$-approximated either by a hyperbolic diffeomorphism (i.e.
satisfying the Axiom A and the no-cycles condition) or by a
diffeomorphism that exhibits a homoclinic tangency or a
heterodimensional cycle.
\end{conj}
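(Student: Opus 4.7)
The plan is to approach Palis' density conjecture through a dichotomy driven by the presence of dominated splittings on homoclinic classes, using the perturbation technology developed in this paper as the main engine for producing cycles. Since the conjecture is open in full generality, I treat this as an outline of a reasonable line of attack rather than a finished program. First I would restrict to a $C^r$-residual set in which Kupka--Smale holds and, when $r=1$, chain recurrence classes of periodic points coincide with their homoclinic classes (Bonatti--Crovisier connecting lemma for pseudo-orbits). Assume for contradiction that $f$ lies in this residual set, is not approximable by hyperbolic diffeomorphisms, and cannot be approximated by diffeomorphisms with a homoclinic tangency or a heterodimensional cycle. Then some homoclinic class $H(p,f)$ must violate Axiom A or the no-cycles condition and, in particular, fail to be uniformly hyperbolic.

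Next I would split into two cases according to the presence of a dominated splitting on $H(p,f)$. If there is no dominated splitting, the results advertised in the abstract of this paper apply: by performing adapted perturbations along a suitable periodic orbit inside $H(p,f)$, lack of domination can be converted into a robust heterodimensional cycle, directly contradicting our standing assumption. If instead $H(p,f)$ carries a dominated splitting $E \oplus F$ that fails to be hyperbolic, one must produce a tangency. On surfaces this is Pujals--Sambarino, extended to the $C^1$-generic setting by Crovisier--Pujals. In higher dimensions I would refine the dominated splitting inductively, invoke Ma\~n\'e's ergodic closing lemma to locate periodic orbits whose central Lyapunov exponents accumulate on zero, and then apply Franks' lemma to engineer a homoclinic tangency at such an orbit.

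The main obstacle lies in the higher-dimensional dominated case when the central bundle has dimension at least two: Pujals--Sambarino's one-dimensional arguments do not transfer, and one must control a non-scalar linear cocycle over a non-uniformly hyperbolic base. This is precisely the regime where the viral homoclinic class mechanism of this paper becomes relevant: self-replicating non-dominated classes generate wild dynamics with infinitely many homoclinic classes and uncountably many aperiodic chain recurrence classes, and such a rich scenario seems in tension with density of either hyperbolicity or of simple cycles. A route towards the conjecture in this setting would therefore interleave inductive refinements of dominated splittings with viral-class surgery to force either the appearance of a tangency inside a center-stable/center-unstable block or the creation of a heterodimensional connection between a viral copy and its parent. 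Making this interplay rigorous, and in particular ensuring that the perturbations preserve the dominated structure one wishes to destroy only along a controlled direction, is where the real difficulty concentrates and why the statement remains a conjecture rather than a theorem.
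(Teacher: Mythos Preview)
The paper does not prove this statement: it is explicitly labeled a \emph{Conjecture} (Palis' density conjecture), and the surrounding text notes only that it has been settled for $C^1$ surface diffeomorphisms by Pujals--Sambarino, with partial progress in higher dimensions. There is therefore no paper proof to compare against, and you yourself correctly flag that you are sketching a program rather than giving a proof.

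That said, your outline contains a genuine gap even as a program. You write that when $H(p,f)$ has no dominated splitting, ``the results advertised in the abstract of this paper apply'' and one can convert lack of domination into a robust heterodimensional cycle. This is not what the paper establishes. Theorem~\ref{t.main} requires, beyond non-domination at indices $i$ and $i-1$, the Lyapunov condition $\chi_i(P)+\chi_{i+1}(P)\ge 0$; and Corollary~\ref{c.excnodominated} shows that this is not a technical artifact: a homoclinic class with no dominated splitting at all can still be far from heterodimensional cycles, the obstruction being uniform sectional dissipativity at the period. So the non-dominated branch of your dichotomy does not close, and the paper's own results explain exactly why. The viral mechanism you invoke in the last paragraph also does not bridge this gap: Property~$\fV$ already \emph{presupposes} index variability (saddles of different indices in the class), which is precisely what one is trying to produce from scratch in the conjecture.
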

This conjecture was proved for $C^1$-surface diffeomorphisms  in
\cite{PS:00}. For some partial progress in higher dimensions see
\cite{BD:08,CP:prep}.

Besides this conjecture one also aims to understand the dynamical
phenomena associated to homoclinic tangencies and
heterodimensional cycles and the interplay between them. We
discuss these topics in the next paragraphs.

\medskip

Homoclinic tangencies of $C^2$-diffeomorphisms are the main source
of non-hyperbolic dynamics in dimension two, see
\cite{PT:93,N:04}. Namely, as a key mechanism a homoclinic
tangency of a surface $C^2$-diffeomorphism yields $C^2$-robust
homoclinic tangencies and  generates open sets of diffeomorphisms
where the generic systems display infinitely many sinks or
sources, \cite{N:78,N:79}. This  leads to the first examples of
the so-called {\emph{wild dynamics}} (i.e. systems having
infinitely many elementary pieces of dynamics with some
persistence, see \cite[Chapter 10]{BDV:04} for a discussion and
precise definitions). Moreover, these homoclinic tangencies also
yield infinitely many regions containing robust homoclinic
tangencies associated to other hyperbolic sets (this follows from
 \cite{N:79} and \cite{Co:98}, see also the comments in
\cite[page 33]{BDV:04}). Using the terminology in \cite{B:bible},
this means that, for surface diffeomorphisms, the existence of
$C^2$-robust tangencies  is a \emph{self-replicating} or
\emph{viral} property, for more details see
Section~\ref{ss.viral}.

Comparing with the $C^2$-case, $C^1$-diffeomorphisms of surfaces
do not have hyperbolic sets with robust homoclinic tangencies, see
\cite{M:pre} and also \cite[Corollary 3.5]{B:bible} for a formal
statement. However, in higher dimensions $C^1$-diffeomorphisms can
display robust tangencies, see for instance \cite{S:72,A:08}.

In higher dimensions, the first examples of robustly
non-hyperbolic dynamics were obtained by Abraham and Smale in
\cite{AS:70} by constructing diffeomorphisms with robust
heterodimensional cycles (although this terminology is not used there).
Moreover, the diffeomorphisms with heterodimensional cycles in
\cite{AS:70} also exhibit robust homoclinic tangencies (this
follows from \cite{BD:pre}).

In the $C^1$-setting, the generation of homoclinic tangencies is a
quite well understood phenomenon that is strongly related to the
existence of non-dominated splittings, \cite{W:04,BGV:06,G:10}.
Contrary to the case of tangencies, the generation of
heterodimensional cycles is not well understood and remains the
main difficulty for solving Palis conjecture in the $C^1$-case. In
contrast with the case of $C^1$-homoclinic tangencies,
heterodimensional cycles yield $C^1$-robust cycles after small
$C^1$-perturbations, \cite{BD:08}. However, in dimension $d\ge 3$,
 we do not know ``when and how" homoclinic tangencies
may occur $C^1$-robustly. In fact, all known examples of
$C^1$-robust tangencies also exhibit $C^1$-robust
heterodimensional cycles\footnote{The converse is false: there are
diffeomorphisms (of partially hyperbolic type with one dimensional
central direction) that display robust heterodimensional cycles
but cannot have homoclinic tangencies, see  for instance
\cite{M:78,BD:95}.}. For further discussion see \cite[Conjecture
6]{B:bible}.

 These comments lead to the following
strong version of Palis' conjecture (in fact, this reformulates
\cite[Question 1]{BD:08}):

\begin{conj}[{\cite[Conjecture 7]{B:bible}}]
\label{c.bonatti} The union of the set of hyperbolic
diffeomorphisms (i.e. satisfying the Axiom A and the no-cycle
condition) and  of the set of diffeomorphisms having a robust
heterodimensional cycle is
dense in $\diff$. 
\end{conj}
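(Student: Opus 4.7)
The plan is to reduce Conjecture~\ref{c.bonatti} to (the known cases of) Palis' density conjecture and then convert homoclinic tangencies into $C^1$-robust heterodimensional cycles using the tools of this paper. The key observation motivating the reduction is that a heterodimensional cycle already yields a $C^1$-robust cycle after an arbitrarily small perturbation by \cite{BD:08}, so the delicate case is that of a homoclinic tangency.

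In the first step, I would invoke Palis' density conjecture in the regime where it is known (the $C^1$ surface case \cite{PS:00}, or the partial higher-dimensional results of \cite{BD:08,CP:prep}). Given a non-hyperbolic $f\in\diff$, this produces arbitrarily close to $f$ a diffeomorphism $g$ exhibiting either a homoclinic tangency or a heterodimensional cycle. In the latter case, \cite{BD:08} gives a further small perturbation yielding a $C^1$-robust cycle, and we are done. Assume therefore that $g$ has a homoclinic tangency associated with a hyperbolic transitive set $K$, and write $H(K,g)$ for the ambient homoclinic class.

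The second step exploits the link, made precise in \cite{W:04,BGV:06,G:10}, between homoclinic tangencies and the failure of a dominated splitting of $H(K,g)$ of index equal to $\dim E^s(K)$. I would then apply the announced main conclusion of this paper---that non-domination of a homoclinic class yields a robust heterodimensional cycle, via adapted perturbations along periodic orbits that preserve the relevant homoclinic relations---to produce the desired $C^1$-robust cycle near $g$. When the index of non-domination does not directly supply two hyperbolic classes of different stable indices in cyclic contact, the viral mechanism of Section~\ref{s.viral} would be used to generate, inside a perturbation of $H(K,g)$, further homoclinic subclasses whose indices straddle that of $K$; a pair of these then supplies the sought robust cycle.

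The main obstacle I anticipate is controlling $H(K,g)$ under the perturbation: the non-dominated splitting produced by the tangency could be destroyed by the very perturbation needed to manufacture the cycle, and the viral copies of the class could fail to have indices in the right range. A secondary, more fundamental obstacle is that Palis' density conjecture itself remains open for $d\geq 3$, so the strategy only yields an unconditional statement in low dimensions or within the regimes where density is already established. Outside those regimes, what one realistically obtains is a conditional theorem: modulo Palis' conjecture and the non-domination results developed in this paper, Conjecture~\ref{c.bonatti} follows.
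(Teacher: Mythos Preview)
The statement you are attempting to prove is a \emph{conjecture}, not a theorem: the paper does not prove it. Immediately after stating Conjecture~\ref{c.bonatti}, the paper records only that it is known in two special settings (conservative diffeomorphisms in dimension $d\ge 3$, and tame systems), with references to \cite{C:pre} and \cite[Theorem~2]{BD:08}. There is therefore no ``paper's own proof'' to compare your proposal against.

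Your proposal also does not constitute a proof, and you essentially acknowledge this yourself: your Step~1 assumes Palis' density conjecture, which is open for $d\ge 3$. But there is a further genuine gap in Step~2 that you should be aware of. A homoclinic tangency at a saddle of $s$-index $i$ gives non-domination of the class at index $i$ only. Theorem~\ref{t.main}, however, requires non-domination at \emph{both} indices $i$ and $i-1$, together with the Lyapunov condition $\chi_i+\chi_{i+1}\ge 0$; a single tangency does not supply these. Corollary~\ref{c.excnodominated} shows, conversely, that if the class has no dominated splitting of any index but is far from heterodimensional cycles, then it must be sectionally dissipative---so the obstruction is real, not merely technical. Finally, the viral mechanism of Section~\ref{s.viral} does not manufacture cycles from a tangency; it presupposes Property~$\fV$ (robust non-domination at \emph{every} index plus index variability), which is far stronger than what a tangency alone yields, and its output is self-replication of classes, not the creation of a first cycle. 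In short, the paper's results narrow the gap toward Conjecture~\ref{c.bonatti} but do not close it, and your outline does not bridge the remaining distance.
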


This conjecture holds in two relevant $C^1$-settings: the
conservative diffeomorphisms in dimension $d\ge 3$ and the so
called \emph{tame} systems (diffeomorphisms whose chain recurrence
classes are robustly isolated), see \cite{C:pre} and
 \cite[Theorem 2]{BD:08}. See also
previous results in \cite{A:03,GW:03}.

\subsection{Some informal statements and
questions}\label{ss.informal} In what follows we focus on
$C^1$-diffeomorphisms defined on closed manifolds of dimension
$d\ge 3$. We now briefly and roughly describe some of our results
and the sort of questions we will consider (the precise
definitions and statements will be given throughout the
introduction).

\smallskip

\noindent {\bf {A)}}
 {\em When do homoclinic tangencies yield heterodimensional cycles?}
In terms of {\emph{dominated splittings,}} Theorem~\ref{t.main} and
Corollary~\ref{c.main} give a natural setting where homoclinic
tangencies generate  heterodimensional cycles after arbitrarily
small $C^1$-perturbations.

\smallskip

\noindent {\bf {B)}} {\em What are  obstructions to the occurrence
of heterodimensional cycles?} {\em Sectional dissipativity}
prevents the ``coexistence" of periodic saddles with different
indices and hence the occurrence of heterodimensional cycles. For
homoclinic classes that do not have dominated splittings,  we
wonder  if this is the only possible obstruction for the
generation of heterodimensional cycles.
Corollary~\ref{c.excnodominated} shows that sectional
dissipativity is indeed the only obstruction for the occurrence of
heterodimensional cycles in homoclinic classes without any
dominated splitting.

\smallskip

\noindent {\bf {C)}} {\em Is it possible to turn the lack of
domination into a robust property?} For homoclinic classes,
Theorem~\ref{t.complex} shows that the non-existence of a
{\emph{dominated splitting of index $i$}} can always be made a
robust property when the class contains some saddle of stable
index different from $i$.

\smallskip

\noindent {\bf {D)}} {\em Which are the dynamical features
associated to robust non-do\-mi\-na\-ted dynamics?} In contrast to
the case of surfaces, homoclinic tangencies and ``some" lack of
domination do not always lead to wild dynamics. A homoclinic
tangency corresponds to the lack of domination of some index. For
homoclinic classes containing saddles of several stable indices,
Theorem~\ref{t.bviral} and Corollary~\ref{c.bviral} claim that the
robust lack of any domination leads to wild dynamics. In fact,
Theorem~\ref{t.bviral} asserts that the property of ``total
non-domination plus  coexistence of saddles of several indices"
provides another example of a viral property of a chain recurrence
class. This property leads to the generic coexistence of a
non-countable set of different (aperiodic) classes, extending
previous results in \cite{BD:02}

\smallskip

We next define precisely the main definitions involved in this
paper and state our main results.

\subsection{Basic definitions}\label{ss.basic}

 We will focus on two types of elementary pieces of the dynamics:
   homoclinic classes and chain recurrence classes.

 The {\emph{homoclinic class}} of a hyperbolic periodic point $P$,
denoted by $H(P,f)$, is the closure of the transverse
intersections of the stable and unstable manifolds of the orbit of
$P$. Note that the class $H(P,f)$ coincides with the closure of
the saddles $Q$ {\emph{homoclinically related with $P$}}: the
stable manifold of the orbit of $Q$ transversely meets the
unstable manifold of the orbit of $P$ and vice-versa.

To define a {\emph{chain recurrence class}}  we need some
preparatory definitions. A finite sequence of points
$(X_i)_{i=0}^n$ is an {\emph{$\epsilon$-pseudo-orbit}} of a
diffeomorphism $f$ if $\mbox{dist\,}(f(X_i),X_{i+1})<\epsilon$ for
all $i=0,\dots,n-1$. A point $X$ is {\emph{chain recurrent}} for
$f$ if
 for every $\epsilon>0$ there is an $\epsilon$-pseudo-orbit
 $(X_i)_{i=0}^n$, $n\ge 1$,
starting and ending at $X$ (i.e. $X=X_0=X_n$). The chain recurrent
points form the {\emph{chain recurrent set}} of $f$, denoted by
$R(f)$. This set splits into disjoint {\emph{chain recurrence
classes}} defined as follows. The class of a point $X\in R(f)$,
denoted by $C(X,f)$, is the set of points $Y\in M$ such that for
every $\epsilon>0$ there are $\epsilon$-pseudo-orbits joining $X$
to $Y$ and $Y$ to $X$. A chain recurrence class that does not
contain periodic points is called {\emph{aperiodic.}}

As a remark, in general, for hyperbolic periodic points their
chain recurrence classes contain  their homoclinic ones. However,
for $C^1$-generic diffeomorphisms the equality holds,
\cite[Remarque 1.10]{BC:04}.

A key ingredient in this paper is the notion of {\emph{dominated
splitting}}:

\begin{defi}[Dominated splitting]\label{d.dominated}
Consider a diffeomorphism $f$ and a compact $f$-invariant set
$\La$. A $Df$-invariant splitting $T_{\La}M=E\oplus F$ over $\La$
is {\emph{dominated}} if the fibers $E_x$ and $F_x$ of $E$ and $F$
have constant dimensions and there exists $k\in \NN$ such that
\begin{equation}
\label{e.dominated} \frac{||D_x f^k(u)||}{||D_xf^{k} (w) ||}\le
\frac{1}{2},
\end{equation}
 for
every $x\in \La$ and every pair of unitary vectors $u\in E_x$ and
$w\in F_x$.

The {\emph{index}} of the dominated splitting is the dimension of
$E$.

 When we want to stress on the role  of the constant $k$ we say
that the splitting is \emph{$k$-dominated.}
\end{defi}

Given a periodic point $P$ of $f\in \diff$ denote by $\pi(P)$ its
period. We order the eigenvalues $\la_1(P),\dots, \la_d(P)$ of
$D_P f^{\pi(P)}$ in increasing modulus and counted with
multiplicity, that is, $|\la_i(P)|\le |\la_{i+1}(P)|$. We call
$\la_i(P)$ the {\emph{$i$-th multiplier}} of $P$. The \emph{$i$-th
Lyapunov exponent} of $P$ is $\chi_i(P)=\frac{1}{\pi(P)} \, \log
|\la_i(P)|$. If $\chi_i(P)<\chi_{i+1}(P)<0$ then one can define
the {\emph{strong stable manifold of dimension $i$}} of the orbit
of $P$, denoted by $W^{ss}_i(P,f)$, as the only $f$-invariant
embedded manifold of dimension $i$ tangent to the $i$-dimensional
eigenspace corresponding to the multipliers $\la_1(P),\dots ,
\la_i(P)$. There are similar definitions for strong unstable
manifolds.

Recall that if $\La$ is hyperbolic set of $f$ then every
diffeomorphism $g$ close to $f$ has a hyperbolic set $\La_g$
(called the {\emph{continuation of $\La$}}) that is close and
conjugate to $\La$. If the set $\La$ is transitive the dimension
of its stable bundle is called its {\emph{stable index}} or simply
\emph{$s$-index}.

Throughout this paper we consider diffeomorphisms defined on
closed manifolds of dimension $d\ge 3$. Unless it is explicitly
mentioned, we always consider $C^1$-diffeomorphisms,
$C^1$-neighborhoods, and so on. We repeatedly  consider
perturbations of diffeomorphisms. By a {\emph{perturbation}} of a
diffeomorphism $f$ we mean here a diffeomorphism $g$ that is
arbitrarily $C^1$-close to $f$. To emphasize the size of the
perturbation we say that a diffeomorphism $g$ is a
{\emph{$\pes$-perturbation}} of $f\in \diff$ if the $C^1$-distance
between $f$ and $g$ is less than $\pes$.

\subsection{Heterodimensional cycles generated by homoclinic
tangencies}\label{ss.heterodimtangencies} Recall that the
generation of homoclinic tangencies  is closely related to the
absence of dominated splittings over homoclinic classes. In fact,
in \cite{G:10} it is proved that if the stable/unstable splitting
over the periodic points homoclinically related to a saddle $P$ is
not dominated then there are diffeomorphisms $g$ arbitrarily
$C^1$-close to $f$ with a homoclinic tangency associated to $P_g$.
See also previous results in \cite{W:04}.

Our main result about the interplay between homoclinic tangencies
and heterodimensional cycles is stated in the following theorem.

\begin{theo}\label{t.main} Let $f$ be a diffeomorphism and
$P$ a hyperbolic periodic saddle of $f$ with stable index $i\ge
2$.
Assume that
\begin{enumerate}
\item\label{i1}
there is no dominated splitting over $H(P,f)$ of index $i$,
\item\label{i2}
there is no dominated splitting over $H(P,f)$ of index $i-1$, and
\item\label{i3new} the  Lyapunov exponents of  $P$ satisfy
$\chi_i(P)+\chi_{i+1}(P)\geq 0$.
\end{enumerate}
Then there are diffeomorphisms $g\in \diff$ arbitrarily
$C^1$-close to $f$ with a heterodimensional cycle associated to
$P_g$ and a saddle $R_g\in H(P_g,g)$ of stable index $i-1$.

Moreover, the diffeomorphisms $g$ can be chosen such that there
are hyperbolic transitive sets $L_g$ and $K_g$ containing $P_g$
and $R_g$, respectively, having simultaneously a robust
heterodimensional cycle and
 a robust homoclinic tangency.
\end{theo}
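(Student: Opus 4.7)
The plan proceeds in three phases, chaining the three hypotheses through successive $C^{1}$-perturbations. \emph{Phase 1 (producing the tangency and a saddle of stable index $i-1$).} Hypothesis (\ref{i1}), the absence of a dominated splitting of index $i$ over $H(P,f)$, combined with the results of~\cite{G:10} (building on~\cite{W:04}), yields a $C^{1}$-small perturbation $f_{1}$ for which $P_{f_{1}}$ exhibits a homoclinic tangency whose non-transverse direction lies in the two-plane spanned by the weakest stable and weakest unstable eigenspaces of $P_{f_{1}}$. Condition (\ref{i3new}) says exactly that this two-plane is area-nondecreasing along the $P$-orbit, so unfolding the tangency yields, in an arbitrarily small neighborhood of the tangency orbit, a hyperbolic horseshoe containing a saddle $R$ of stable index $i-1$ (the sectional non-contraction turns the weakest stable direction into an expanding one for the new orbit). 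By construction the orbit of $R$ shadows that of $P_{f_{1}}$ most of the time, so $R\in H(P_{f_{1}},f_{1})$ and one of the two required cycle connections, $W^{u}(R)\pitchfork W^{s}(P_{f_{1}})$, comes for free from the horseshoe geometry (the dimension sum is $d+1$).

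\emph{Phase 2 (closing the cycle).} The remaining intersection $W^{s}(R)\cap W^{u}(P_{f_{1}})\neq\emptyset$ has expected dimension $-1$, so it cannot be obtained transversely and must be created by perturbation. This is where hypothesis (\ref{i2}) enters: since there is no dominated splitting of index $i-1$ over $H(P,f)$, Franks-type adapted perturbations along a saddle homoclinically related to $P$ can rotate the weak stable direction into the unstable bundle; combined with a Hayashi-type connecting lemma (in the spirit of the techniques of~\cite{BD:08}), this produces the missing heteroclinic intersection after a further $C^{1}$-small perturbation $f_{2}$, yielding a heterodimensional cycle between $P_{f_{2}}$ and $R_{f_{2}}\in H(P_{f_{2}},f_{2})$.

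\emph{Phase 3 (simultaneous robustness).} Once the cycle is available between two saddles whose stable indices differ by one, the blender construction of~\cite{BD:08} yields, after a last $C^{1}$-small perturbation, transitive hyperbolic sets $K_{g}\ni R_{g}$ and $L_{g}\ni P_{g}$ whose invariant manifolds cross $C^{1}$-robustly. The tangency produced in Phase~1 is preserved throughout the construction (the blender is inserted in a region compatible with the tangency orbit), so $L_{g}$ has simultaneously a robust homoclinic tangency and a robust heterodimensional cycle with $K_{g}$. I expect the main obstacle to be Phase~2: creating the thin connection $W^{s}(R)\cap W^{u}(P)$ without destroying the tangency or the saddle $R$ built in Phase~1. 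This is precisely where the adapted perturbation technology announced in the abstract, which preserves prescribed homoclinic relations while modifying the derivative along a periodic orbit, must fuse the two non-domination hypotheses (\ref{i1}) and (\ref{i2}) into one coherent construction.
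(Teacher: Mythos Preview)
Your outline misidentifies the role of hypothesis~(\ref{i2}) and, as a result, Phase~2 has a real gap. A Hayashi-type connecting lemma produces intersections of \emph{generic} (zero or positive) codimension; it cannot, by itself, create the codimension-one connection $W^{s}(R)\cap W^{u}(P)$ once $R$ already has index $i-1$. Your sentence ``rotate the weak stable direction into the unstable bundle'' does not name an actual mechanism: $R$ is of index $i-1$, it is not homoclinically related to $P$, and the lack of index-$(i-1)$ domination is a statement about $H(P,f)$, not about $R$. The paper uses hypothesis~(\ref{i2}) in a quite different way (Proposition~\ref{p.strongconnection}): it produces a saddle $Q$ \emph{of index $i$}, homoclinically related to $P$, whose \emph{strong} stable manifold $W^{ss}_{i-1}(Q)$ meets $W^{u}(Q)$ (property $\fP_{ss}$). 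This is the codimension-one object, and it is built not by a connecting argument but by first making $\la_{i-1}(Q),\la_i(Q)$ non-real via Proposition~\ref{p.complex}, linearizing, and then pushing a transverse homoclinic point into the central two-plane.

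The paper then avoids your ``create $R$ first, close the cycle later'' architecture entirely. Hypotheses~(\ref{i1}) and~(\ref{i3new}) are used via Proposition~\ref{p.weak} to get a saddle $Q'$ of index $i$ with $\chi_i(Q')\in(-\delta,0)$ (property $\fP_{i,\delta}$); the two adapted perturbations are performed on disjoint supports; and then a single saddle $S$ of index $i$ is found carrying \emph{both} $\fP_{ss}$ and $\fP_{i,\delta}$ (Claims~\ref{cl.weakandss} and~\ref{cl.getacycle}). Bifurcating $S$ across index $i$ splits it into $\bar S$ (index $i$) and $\bar R$ (index $i-1$); the strong homoclinic intersection of $S$ becomes the delicate connection $W^{s}(\bar R)\cap W^{u}(\bar S)$ for free. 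So the cycle is born already closed, rather than being closed after the fact.

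Finally, Phase~3 is not right either: a single tangency is never $C^{1}$-robust, so ``the tangency produced in Phase~1 is preserved throughout'' is false. The paper obtains the robust tangency part separately: after the robust cycle is in place (Lemma~\ref{l.bodiki}), it uses Proposition~\ref{p.complex} again to force non-real multipliers of indices $i$ and $i-1$ into the classes of $P_g$ and $R_g$, giving \emph{robust} non-domination, and then invokes~\cite{BD:pre} to conclude robust homoclinic tangencies for the hyperbolic sets $L_g$ and $K_g$.
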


\begin{rema}\label{r.mainb} $\,$

\noindent (i) In fact, we prove  Theorem~\ref{t.main} under the
following slightly weaker hypothesis  replacing condition
(\ref{i3new}).
\begin{enumerate}
\item[(3')]
For every $\de>0$ there exists a periodic point $Q_\de$
homoclinically related to $P$ whose Lyapunov exponents satisfy
$\chi_i(Q_\de)+\chi_{i+1}(Q_\de)\geq -\de$.
\end{enumerate}
\smallskip

\noindent
(ii) Hypothesis (\ref{i2}) can be replaced by the
following condition (see Proposition~\ref{p.excfinal}).
\begin{enumerate}
\item[(2')]  There is a diffeomorphism $g$ arbitrarily $C^1$-close to $f$  having a periodic point $R_g$ that is homoclinically related to $P_g$
and that has a strong stable manifold of dimension $i-1$
intersecting the unstable manifold of the orbit of $R_g$.
\end{enumerate}
\end{rema}

Theorem~\ref{t.main} will be proved in
Section~\ref{ss.proofoftmain}. Let us observe that
for three dimensional
diffeomorphisms
 a version of
this theorem was proved in \cite{S:pre}  replacing condition
(\ref{i3new}) by a stronger one requiring  existence of a saddle
$Q$ homoclinically related to $P$ such that $\chi_1(Q)+\chi_{2}(Q)
+\chi_3(Q)>0$. Note that conditions (\ref{i3new}) and (3') are
related to the notion of a sectionally dissipative bundle that is
also considered in \cite{PV:94,R:95}, see
Section~\ref{ss.farfromheterodimnsional}.

Condition~(\ref{i1}) is used to get homoclinic tangencies
associated to $P$. Conditions~(\ref{i2}) and (\ref{i2}') assure
that the homoclinic class is not contained in a normally
hyperbolic surface (this would be an obstruction for the
generation of heterodimensional cycles). Finally,
condition~(\ref{i3new}) implies that these tangencies generate
saddles of index $i-1$.

We would like to replace condition (\ref{i3new}) (or (3')) by a
weaker one about Lyapunov exponents of measures supported over the
class, namely requiring the existence of an ergodic measure $\mu$
whose $i$-th and $(i+1)$-th Lyapunov exponents satisfy
$\chi_i(\mu)+\chi_{i+1}(\mu)\geq 0$. This potential extension is
related to the still open problem of approximation of ergodic
measures supported on a homoclinic class by measures supported on
periodic points of the class, see \cite[Conjecture 2]{B:bible} and
\cite{ABC:}.

There is also the following ``somewhat symmetric" version of
Theorem~\ref{t.main} that is an immediate consequence of it.

\begin{coro}\label{c.main}
Consider  a hyperbolic  saddle $P$ of stable index $i$, $2\leq
i\leq d-2$, of a diffeomorphism $f$. Assume that there are no
dominated splittings over $H(P,f)$ of indices  $i-1$, $i$, and
$i+1$. Then there is a diffeomorphism $g$ arbitrarily $C^1$-close
to $f$ with a heterodimensional cycle associated to $P_g$ and a
saddle $R_g\in H(P_g,g)$ of stable index $i-1$ or $i+1$.

Moreover, the diffeomorphism $g$ can be chosen such that there are
hyperbolic transitive sets $L_g$ and $K_g$ containing $P_g$ and
$R_g$, respectively, having simultaneously a robust
heterodimensional cycle and a robust homoclinic tangency.
\end{coro}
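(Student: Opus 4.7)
\smallskip
\noindent\textbf{Proof plan.} The plan is to deduce the corollary directly from Theorem~\ref{t.main} applied either to $f$ or to $f^{-1}$, exploiting the $f\leftrightarrow f^{-1}$ symmetry to compensate for the asymmetric roles of $i-1$ and $i+1$ in the theorem. The case split is governed by the sign of $\chi_i(P)+\chi_{i+1}(P)$; there is no substantive obstacle beyond carefully tracking indices and Lyapunov exponents under time reversal.

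If $\chi_i(P) + \chi_{i+1}(P) \geq 0$, then hypotheses (\ref{i1}), (\ref{i2}), and (\ref{i3new}) of Theorem~\ref{t.main} all hold for $f$ at $P$: the first two come directly from the non-existence of dominated splittings of indices $i$ and $i-1$ on $H(P,f)$, and the third is immediate. The theorem then produces a diffeomorphism $g$ arbitrarily $C^1$-close to $f$ together with $R_g\in H(P_g,g)$ of stable index $i-1$, and the required robust heterodimensional cycle and robust homoclinic tangency.

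If instead $\chi_i(P) + \chi_{i+1}(P) < 0$, I would apply Theorem~\ref{t.main} to $f^{-1}$ at $P$. The $f^{-1}$-stable index of $P$ is $i':=d-i$, which lies in $[2,d-1]$ since $2\leq i\leq d-2$. A splitting $E\oplus F$ is dominated of index $\dim E$ for $f$ if and only if $F\oplus E$ is dominated of index $d-\dim E$ for $f^{-1}$; hence the assumed absence of dominated splittings of indices $i-1, i, i+1$ over $H(P,f)=H(P,f^{-1})$ for $f$ is equivalent to the absence of dominated splittings of indices $i'-1, i', i'+1$ for $f^{-1}$, supplying hypotheses (\ref{i1}) and (\ref{i2}). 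Since the $j$-th Lyapunov exponent of $P$ for $f^{-1}$ equals $-\chi_{d-j+1}(P)$, a direct computation gives
\[
\chi_{i'}(P,f^{-1})+\chi_{i'+1}(P,f^{-1}) \;=\; -\chi_{i+1}(P)-\chi_i(P) \;>\; 0,
\]
which is hypothesis (\ref{i3new}). Theorem~\ref{t.main} then produces $\tilde g$ arbitrarily $C^1$-close to $f^{-1}$ and a saddle $R_{\tilde g}\in H(P_{\tilde g},\tilde g)$ of $\tilde g$-stable index $i'-1$. Setting $g:=\tilde g^{-1}$ yields a diffeomorphism arbitrarily $C^1$-close to $f$ for which $R_g\in H(P_g,g)$ has $g$-stable index $d-(i'-1)=i+1$, and the hyperbolic transitive sets witnessing the robust heterodimensional cycle and the robust homoclinic tangency for $\tilde g$ remain such for $g$ since both properties are invariant under time reversal.
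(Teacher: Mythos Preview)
Your argument is correct and is precisely the ``somewhat symmetric'' reduction the paper has in mind: apply Theorem~\ref{t.main} to $f$ when $\chi_i(P)+\chi_{i+1}(P)\geq 0$, and to $f^{-1}$ otherwise, using the standard correspondences for indices of dominated splittings and Lyapunov exponents under time reversal. The paper does not spell out a proof beyond calling the corollary an immediate consequence of the theorem, so your write-up is in fact more detailed than the original.
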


 Theorem~\ref{t.main} has the following consequence
 for
$C^1$-generic diffeomorphisms of three dimensional manifolds that
slightly  generalizes the dichotomy
 ``domination versus infinitely many sources/sinks" in \cite{BDP:03}.

\begin{coro}\label{c.main2}
Let $M$ be a closed manifold of dimension three. There is a
residual subset $\cR$ of $\diff$ such that for every
diffeomorphism $f$ and every saddle $P$ of stable index $2$ of $f$
(at least) one of the following three possibilities holds:
\begin{itemize}
\item
$H(P,f)$ has a dominated splitting;
\item
$H(P,f)$ is the Hausdorff limit of  periodic sinks;
\item $f$ has a robust heterodimensional  cycle associated to $P$ and
$H(P,f)$ is the Hausdorff limit of  periodic sources.
\end{itemize}
\label{t.trichotomy}
\end{coro}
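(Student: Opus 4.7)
Fix a residual set $\cR \subset \diff$ on which the standard generic properties hold: chain recurrence classes of hyperbolic periodic orbits coincide with their homoclinic classes, the homoclinic class $H(P,f)$ depends lower semi-continuously on $f$, and any $C^1$-open property that $C^1$-accumulates $f$ is realized at $f$. Let $f \in \cR$ and $P$ be a saddle of stable index $2$. If $H(P,f)$ carries a dominated splitting (of index $1$ or $2$), alternative (a) of the trichotomy holds, so assume that $H(P,f)$ has no dominated splitting of any index; then hypotheses~(\ref{i1}) and~(\ref{i2}) of Theorem~\ref{t.main} for $i=2$ hold automatically, and the whole argument reduces to a dichotomy on the sign behavior of $\chi_2+\chi_3$ along the periodic orbits homoclinically related to $P$.

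\textbf{Case A: condition (3') holds.} Suppose that for every $\de>0$ there exists a saddle $Q_\de$ homoclinically related to $P$ with $\chi_2(Q_\de)+\chi_3(Q_\de)\ge -\de$. By Remark~\ref{r.mainb}(i), Theorem~\ref{t.main} produces arbitrarily $C^1$-close diffeomorphisms $g$ carrying a robust heterodimensional cycle between $P_g$ and a saddle $R_g\in H(P_g,g)$ of stable index~$1$. Having a robust heterodimensional cycle associated to the continuation of $P$ together with some saddle of stable index~$1$ in the class is an open property that is $C^1$-accumulated by $f$, so genericity of $f$ transfers it to $f$ itself: there exists $R\in H(P,f)$ of stable index~$1$ involved in a robust cycle with $P$. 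Since $H(P,f)$ has no dominated splitting of index~$1$, a cocycle perturbation along the orbit of $R$ drives the weakest multiplier $|\la_1(R)|$ across~$1$ after an arbitrarily small $C^1$-perturbation, converting $R$ into a periodic source whose orbit stays Hausdorff-close to the orbit of $R$ in $H(P,f)$. Iterating this construction with shrinking perturbations over a countable dense family of such saddles and invoking genericity yields case~(c): $H(P,f)$ is the Hausdorff limit of periodic sources.

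\textbf{Case B: condition (3') fails.} Then there exists $\de_0>0$ such that every saddle $Q$ homoclinically related to $P$ satisfies $\chi_2(Q)+\chi_3(Q)<-\de_0$. Absence of a dominated splitting of index~$2$ allows, via a cocycle perturbation along a periodic orbit of the class, to produce arbitrarily $C^1$-close diffeomorphisms $g$ carrying a saddle $Q\in H(P_g,g)$ homoclinically related to $P_g$ with $0<\chi_3(Q)<\de_0/4$. The strict inequality $\chi_2(Q)+\chi_3(Q)<-\de_0/2$ (which persists under small perturbations of $g$) then forces $\chi_2(Q)<-\de_0/4$, and a further arbitrarily small $C^1$-perturbation pushes $\chi_3(Q)$ strictly below zero; all three Lyapunov exponents of $Q$ are then negative, so $Q$ has become a periodic sink. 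Its orbit lies in a small Hausdorff neighborhood of $H(P,f)$, and iterating over a dense family of periodic orbits of the class and invoking genericity of $f$ gives case~(b): $H(P,f)$ is the Hausdorff limit of periodic sinks.

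\textbf{Main obstacle.} The delicate point is to transfer the perturbative conclusions (existence of the robust cycle and of the saddle $R$ of index~$1$ inside the class in Case~A, and Hausdorff approximation by sinks/sources in both cases) into genuine statements about $f$ itself, and in particular to guarantee that the newly created index-$1$ saddle actually lies in $H(P,f)$ rather than in some nearby but distinct class. This is handled by tailoring $\cR$ so that every $C^1$-open property accumulating $f$ is realized at $f$, together with the (generic) upper semi-continuity of the chain recurrence class of $P$ and its coincidence with $H(P,f)$, which ensure that both the new saddle and the periodic sinks/sources produced after perturbation are inherited by, or Hausdorff-accumulate, the class of the original generic diffeomorphism.
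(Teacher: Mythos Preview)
Your Case~B argument is essentially correct and recovers one half of the sink/source dichotomy of \cite{BDP:03}. The genuine gap is in Case~A, where you claim that the index-$1$ saddle $R$ can be turned into a source by an arbitrarily small perturbation. Two things fail. First, the absence of an index-$1$ dominated splitting on $H(P,f)$ says nothing about the individual orbit of $R$: that orbit may well carry a strong index-$1$ domination, so no exponent-mixing is available along it. Second, even if mixing were available, driving $\chi_1(R)$ above zero while preserving $\chi_1(R)+\chi_2(R)$ requires $\chi_1(R)+\chi_2(R)>0$, which is nowhere guaranteed by Theorem~\ref{t.main}. So a single source near the orbit of $R$ is not produced, let alone a family of sources whose orbits Hausdorff-accumulate on $H(P,f)$. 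In fact your Case~A can occur while every saddle homoclinically related to $P$ has Jacobian strictly less than one (take $\chi_2+\chi_3$ close to $0$ and $\chi_1$ very negative); in that situation \cite{BDP:03} forces the class to be a limit of sinks, not sources, so bullet~(c) is simply false and the conclusion must come through bullet~(b) instead.

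The paper sidesteps this by choosing a different dichotomy, on the Jacobian rather than on $\chi_2+\chi_3$. If some saddle homoclinically related to $P$ has Jacobian at most one, \cite[Proposition~2.6]{BDP:03} yields sinks directly (bullet~(b)). Otherwise every such saddle has Jacobian larger than one; then \cite{BDP:03} gives sources, and since $\chi_1<0$ forces $\chi_2+\chi_3>-\chi_1>0$, hypothesis~(\ref{i3new}) of Theorem~\ref{t.main} holds and provides the robust cycle (bullet~(c)). The point is that the Jacobian split is precisely what aligns the ``sources'' conclusion with the ``cycle'' conclusion; your $\chi_2+\chi_3$ split does not.
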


\subsection{Non-domination far from heterodimensional cycles implies sectional dissipativity.}\label{ss.farfromheterodimnsional}
One approach for settling Palis conjecture is to study dynamics
{\emph{far from homoclinic tangencies.}} In this case the
diffeomorphisms necessarily have nice dominated splittings that
are adapted to their index structure, see for instance
\cite{W:04}. In contrast, dynamics {\emph{far from
heterodimensional cycles}} is yet little understood. To address
this point we will make the following ``local version" of
Conjecture~\ref{c.bonatti} where a given homoclinic class is
specified.
\smallskip

\noindent {\bf Conjecture \ref{c.bonatti}'.}
 \emph{Let  $P$ be a hyperbolic saddle of a
diffeomorphism $f$ such that for  every diffeomorphism $g$ that is
$C^1$-close to $f$ there is no heterodimensional cycle associated
to the continuation $P_g$ of $P$. Then  there exists a
diffeomorphism $g$ arbitrarily $C^1$-close to $f$ such that the
homoclinic class $H(P_g,g)$ is hyperbolic.}

\smallskip

To discuss Conjecture~\ref{c.bonatti}' let us first consider a
simple illustrating case involving the notion of {\emph{sectional
dissipativity}}. Let $P$ be a hyperbolic saddle of a
diffeomorphism $f$ of stable index $1$ whose homoclinic class
$H(P,f)$ satisfies the following two properties:
\begin{itemize}
\item
 $H(P,f)$ has no dominated splitting of index $1$ and
\item $H(P,f)$ is
uniformly sectionally dissipative for $f^{-1}$, that is, there is
$n>0$ such that the Jacobian of $f$ in restriction to any
$2$-plane is strictly larger than $1$.
\end{itemize}
Under these hypotheses, the lack of domination of $H(P,f)$
corresponding to the index of $P$ enables a homoclinic tangency
associated to $P$ after a perturbation. However, the sectional
dissipativity prevents the existence of saddle points of stable
index larger than $1$ in a small neighborhood of the homoclinic
class of $P$. Thus any diffeomorphism $g$ that is $C^1$-close to
$f$ cannot have a heterodimensional cycle associated to $P_g$.

We wonder if the case above is the only possible setting where
homoclinic tangencies far from heterodimensional cycles can occur.
We provide a partial result to this question by considering
homoclinic classes without any dominated splitting and a weaker
notion of sectional dissipativity.

Consider a set of periodic points $\cP$ of a diffeomorphism $f$
and a $Df$-invariant subbundle $E$ defined over the set $\cP$. The
bundle $E$ is said to be {\emph{sectionally dissipative at the
period}} if for any point $R\in \cP$ there is a constant $0<
\alpha_R<1$ such that $|\la_k\,\la_{k+1}|<\alpha_R^{\pi(R)}$ for
every pair of multipliers $\la_k$ and $\la_{k+1}$ of $R$ whose
eigendirections are contained in $E$. When $E=T_\cP M$  then we
call the set of periodic points $\cP$ {\emph{sectionally
dissipative at the period.}} In the case that the constant
$\alpha_R$ can be chosen independently of $R$ we call the bundle
$E$ (or the set $\cP$) {\emph{uniformly sectionally dissipative at
the period.}}

\begin{coro} \label{c.excnodominated} Let $M$ be a closed manifold $M$ with $dim(M)\geq 3$
and $f\colon M\to M$ a diffeomorphism. Consider a homoclinic class
$H(P,f)$  without any dominated splitting that is far from
heterodimensional cycles. Then the set of periodic points of $f$
homoclinically related to $P$ is uniformly sectionally dissipative
at the period either for $f$ or for $f^{-1}$.
\end{coro}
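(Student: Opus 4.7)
\emph{Proof plan.} Let $i$ denote the stable index of $P$.

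\textbf{Step 1 (reduction to $i\in\{1,d-1\}$).} Since $H(P,f)$ carries no dominated splitting, hypotheses~(\ref{i1}) and~(\ref{i2}) of Theorem~\ref{t.main} hold at every index, for both $f$ and $f^{-1}$. For the Lyapunov condition applied at $P$: either $\chi_i(P)+\chi_{i+1}(P)\ge 0$, so~(\ref{i3new}) holds for $f$; or $\chi_i(P)+\chi_{i+1}(P)\le 0$, whence $\chi_{d-i}^{f^{-1}}(P)+\chi_{d-i+1}^{f^{-1}}(P)\ge 0$, so~(\ref{i3new}) holds for $f^{-1}$ viewing $P$ as a saddle of stable index $d-i$. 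Provided $2\le i\le d-2$, one of these applications of Theorem~\ref{t.main} produces diffeomorphisms arbitrarily $C^1$-close to $f$ with a heterodimensional cycle associated to $P$, contradicting the far-from-cycles hypothesis. Hence $i=1$ or $i=d-1$; replacing $f$ by $f^{-1}$ if necessary, we assume $i=1$.

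\textbf{Step 2 (setup and failure assumption).} Every $Q$ homoclinically related to $P$ shares its stable index, so $\chi_1(Q)<0<\chi_2(Q)\le\cdots\le\chi_d(Q)$. Using $\chi_k^{f^{-1}}(Q)=-\chi_{d+1-k}^f(Q)$, uniform sectional dissipativity at the period for $f^{-1}$ is equivalent to the existence of $\beta>0$ such that $\chi_k(Q)+\chi_{k+1}(Q)\ge\beta$ for every such $Q$ and every consecutive pair $(k,k+1)$ of $f$-multipliers. Arguing by contradiction and extracting a subsequence, we obtain saddles $Q_n$ homoclinically related to $P$ and a fixed index $k\in\{1,\ldots,d-1\}$ with $\chi_k(Q_n)+\chi_{k+1}(Q_n)\to 0$.

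\textbf{Step 3 (case $k=1$).} The condition $\chi_1(Q_n)+\chi_2(Q_n)\to 0$ translates to $\chi_{d-1}^{f^{-1}}(Q_n)+\chi_d^{f^{-1}}(Q_n)\to 0$, so for every $\delta>0$ the point $Q_n$ (for $n$ large) is a witness of condition~(3') of Remark~\ref{r.mainb} applied to $f^{-1}$ at stable index $d-1$ of $P$. Non-domination hypotheses for $f^{-1}$ at indices $d-1$ and $d-2$ are automatic. Theorem~\ref{t.main}, in the form of Remark~\ref{r.mainb}(i), applied to $f^{-1}$ then produces a heterodimensional cycle associated to $P$, contradicting the hypothesis.

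\textbf{Step 4 (case $k\ge 2$).} Since both $\chi_k(Q_n)$ and $\chi_{k+1}(Q_n)$ are positive with sum tending to zero, each tends to zero individually. Exploiting the lack of a dominated splitting at index $k$ over $H(P,f)$, we invoke the adapted perturbation lemmas along the orbit of $Q_n$ (the key ingredient announced in the abstract) to perform a $C^1$-perturbation $g_n$ of $f$ with $\|g_n-f\|_{C^1}\to 0$, after which the continuation of $Q_n$ becomes a saddle $\tilde Q_n\in H(P_{g_n},g_n)$ of stable index at least $k\ge 2$. The homoclinic class of $P_{g_n}$ thus contains saddles of distinct stable indices; by \cite[Theorem~2]{BD:08}, combined with the $C^1$-generic equality $H(\cdot,\cdot)=C(\cdot,\cdot)$ of \cite[Remarque 1.10]{BC:04}, a further arbitrarily small $C^1$-perturbation creates a heterodimensional cycle associated to $P_{g_n}$, again contradicting the hypothesis.

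\textbf{Main obstacle.} The delicate point is Step 4. One must engineer a perturbation that simultaneously (i) flips the sign of one exponent in the pair $(k,k+1)$ lying strictly inside the unstable bundle, (ii) is $C^1$-small (indeed with $C^1$-size tending to $0$, exploiting that $\chi_k(Q_n)+\chi_{k+1}(Q_n)\to 0$ allows a Franks-type rotation of the corresponding eigendirections by vanishingly small amount), and (iii) keeps $\tilde Q_n$ in the homoclinic class of $P_{g_n}$. Since $\tilde Q_n$ and $P_{g_n}$ have different stable indices they cannot be homoclinically related; the third condition must instead assert that $\tilde Q_n$ is a limit of transverse homoclinic points of $P_{g_n}$, and it is exactly this delicate form of adapted perturbation along a periodic orbit whose development occupies much of the paper.
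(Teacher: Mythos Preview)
Your Steps 1--3 are correct and match the paper's argument (the paper phrases Step~1 via Corollary~\ref{c.main} rather than Theorem~\ref{t.main} directly, but this is the same). The problem lies entirely in Step~4, which is both unnecessary and genuinely flawed.

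\textbf{Step~4 is unnecessary.} Since the Lyapunov exponents are ordered, the consecutive sums $\chi_k(Q)+\chi_{k+1}(Q)$ are non-decreasing in $k$: for $k<k'$ one has $\chi_k\le\chi_{k'}$ and $\chi_{k+1}\le\chi_{k'+1}$. Hence the infimum over all pairs is always realized at $k=1$, and the failure of uniform sectional dissipativity for $f^{-1}$ (with $i=1$) is \emph{exactly} the statement that $\chi_1(Q)+\chi_2(Q)$ is not uniformly bounded below by a positive constant. This is precisely condition~(3') of Remark~\ref{r.mainb} for $f^{-1}$, so your Step~3 already covers every case. The paper's proof implicitly uses this monotonicity: it only invokes Theorem~\ref{t.main} at the single index $i$ and deduces sectional dissipativity of the whole bundle from that.

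\textbf{Step~4 is also flawed.} The adapted perturbations developed in the paper (Definition~\ref{d.adapted} and Lemma~\ref{l.gourmelon}) \emph{require by definition} that $Q_g$ remain homoclinically related to $P_g$; in particular they never change the stable index of $Q$. Your ``main obstacle'' paragraph correctly identifies that after flipping the sign of $\chi_k$ the point $\tilde Q_n$ cannot be homoclinically related to $P_{g_n}$, but then asserts that the paper develops perturbations keeping $\tilde Q_n$ in $H(P_{g_n},g_n)$ as a limit of homoclinic points. It does not: no such mechanism appears anywhere in the paper. The creation of saddles of different index inside the class is obtained by quite different means (strong homoclinic intersections and the unfolding in Section~\ref{s.proofofmain}), not by adapted Franks-type perturbations.

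In short: delete Step~4 and the ``main obstacle'' paragraph, and observe in Step~2 that the minimum of $\chi_k+\chi_{k+1}$ over $k$ is $\chi_1+\chi_2$; then your proof is complete and coincides with the paper's.
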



\subsection{Robust non-domination}\label{ss.robust}
We first recall that the existence of a dominated splitting is (in
some sense) an open property. More precisely, if $\La$ is an
$f$-invariant compact set with a dominated splitting $T_\La M
=E\oplus F$, then there are neighborhoods $U$ of $\La$ in $M$ and
$\cU$ of $f$ in $\diff$ such that for every $g\in \cU$ and every
$g$-invariant set $\Sigma$ contained in $U$ there is a dominated
splitting for $\Sigma$ of the same index as $E\oplus F$, see for
instance \cite[Chapter B.1]{BDV:04}. Observe that the next theorem
implies that, in some cases, the absence of domination of a
homoclinic class can, after a perturbation, be turned into a
robust property.

\begin{theo}
\label{t.complex} Let $H(P,f)$ be a non trivial homoclinic class
of a periodic point $P$ of stable index $i$. Assume that for some
$j\ne i$ there is no dominated splitting of index $j$. Then there
exists a diffeomorphism $g$ arbitrarily $C^1$-close to $f$ having
a periodic point $Q$ that is homoclinically related to $P_g$ and such that
$\la_j(Q)$ and $\la_{j+1} (Q)$ are non-real, have the same
modulus, and any $k$-th multiplier of $Q$ has modulus different
from $|\la_j(Q)|$, ($k\ne j,j+1$).
\end{theo}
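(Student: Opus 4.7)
The plan is to locate a periodic orbit $Q$ homoclinically related to $P$ and a small perturbation of the derivative cocycle $Df$ along the orbit of $Q$ that produces the prescribed spectrum at positions $j$ and $j+1$, then to realize this perturbation as a $C^1$-small perturbation of $f$ by means of the version of Franks' lemma that preserves prescribed transverse homoclinic intersections (due to Gourmelon). The latter will guarantee that $Q$ remains homoclinically related to $P_g$. Note that since $j\neq i$ and every periodic point homoclinically related to $P$ has stable index $i$, the multipliers $\lambda_j$ and $\lambda_{j+1}$ of such a point are either both stable (when $j<i$) or both unstable (when $j>i$); in particular, the perturbations we perform cannot alter the stable index of $Q$ nor its homoclinic class.

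The first and main step is to produce, for every $\varepsilon>0$, a periodic point $Q$ homoclinically related to $P$ whose cocycle admits an $\varepsilon$-small perturbation along the orbit (in the sense of Franks) after which $\lambda_j(Q)$ and $\lambda_{j+1}(Q)$ have the same modulus. For this I would invoke the now standard characterization of dominated splittings by periodic orbits developed in \cite{BDP:03,BGV:06}: on the closed $f$-invariant set formed by the saddles homoclinically related to $P$ (which is dense in $H(P,f)$ and inherits from it the absence of any $N$-dominated splitting of index $j$), there must exist arbitrarily long periodic orbits whose cocycle fails to be $N$-dominated at index $j$. On such orbits the size of perturbation needed to merge the $j$-th and $(j+1)$-th moduli can be bounded by a quantity going to zero as $N\to\infty$, yielding the desired $Q$.

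Once the two middle moduli have been made to coincide, a further $C^0$-small perturbation of the cocycle, supported in the two-dimensional invariant plane spanned by the corresponding (generalized) eigenspaces and consisting of composition of one linear map of the orbit with a small rotation in that plane, turns the real pair into a complex conjugate pair of equal modulus. A final arbitrarily small rescaling, if necessary, separates $|\lambda_j(Q)|$ from the moduli of the remaining multipliers. Applying Gourmelon's Franks-type lemma to the resulting linear data produces a $C^1$-small perturbation $g$ of $f$ realizing the prescribed derivatives along the orbit of $Q$ while preserving a transverse intersection of $W^s(P_g,g)$ with $W^u(Q,g)$ and of $W^u(P_g,g)$ with $W^s(Q,g)$; hence $Q$ is homoclinically related to $P_g$ with the required spectral configuration. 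The main technical obstacle I expect is in the first step: extracting the "almost non-dominated" periodic orbit inside the set of points homoclinically related to $P$ rather than merely inside $H(P,f)$, and controlling the homoclinic relation through the perturbation that equalizes the two moduli.
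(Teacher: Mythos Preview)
Your outline follows the paper's strategy up to a point --- find a long periodic orbit $Q$ homoclinically related to $P$ on which the index-$j$ domination fails, use the Bochi--Bonatti machinery to equalize $|\lambda_j(Q)|$ and $|\lambda_{j+1}(Q)|$, then realize everything via Gourmelon's Franks lemma. The genuine gap is in your third step, where you claim that once $|\lambda_j|=|\lambda_{j+1}|$, ``composition of one linear map of the orbit with a small rotation in that plane turns the real pair into a complex conjugate pair''. This is false when $\lambda_j$ and $\lambda_{j+1}$ have \emph{opposite signs}: the restriction of the first-return map to the two-dimensional invariant plane then has negative determinant, and any $2\times 2$ real matrix with negative determinant has real eigenvalues (its discriminant $\mathrm{tr}^2-4\det$ is positive). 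Composing with a rotation, or indeed with any orientation-preserving perturbation, keeps the determinant negative; no small perturbation of the cocycle along that orbit will produce non-real multipliers there. The paper flags exactly this obstruction in the proof of Lemma~\ref{l.pathpcomplex}: ``However they might have opposite signs, which is why Lemma~\ref{l.pcomplex} is not obvious.''

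The paper's fix is not local to the orbit of $Q$ but passes through a horseshoe. After obtaining $\bar Q$ with $\chi_j=\chi_{j+1}$ and real multipliers, they extend the perturbed cocycle to a cocycle $\cA_1$ on a horseshoe $K\ni\bar Q$ carrying a dominated splitting $E\oplus E^{j,j+1}\oplus F$ with $\dim E^{j,j+1}=2$ and no further domination on $E^{j,j+1}$ along the orbit of $\bar Q$. They then choose periodic points $Q_n\in K$ whose orbits make \emph{two} excursions along a transverse homoclinic loop of $\bar Q$ (the ``two-loop'' construction of Section~\ref{s.proofof}). Claim~\ref{c.orientation} shows that on such double-loop orbits the restriction of $\cA_1$ to $E^{j,j+1}$ is orientation-preserving --- essentially because any sign reversal occurs twice --- so the two central multipliers of $Q_n$ have the same sign. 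At that point the rotation argument (via \cite[Proposition~3.1]{BDP:03}) works, and Gourmelon's lemma finishes as you describe. Your proposal is missing precisely this orientation/two-loop step.
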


An immediate consequence of this theorem is that for every
diffeomorphism $h$ close to $g$ the homoclinic class $H(P_h,h)$
does not have a dominated splitting of index $j$.

A more detailed version of this theorem is given in
Proposition~\ref{p.complex}. Unfortunately, it still remains to
settle the hardest case in which the lack of domination of the
class $H(P,f)$ corresponds to the stable index of $P$.

Observe that, under the hypotheses of Theorem~\ref{t.complex}, the
constructions in \cite{BDP:03} imply that there are points $Q$
homoclinically related to $P$ whose multipliers $\la_j(Q)$ and
$\la_{j+1}(Q)$ can be made non-real by small perturbations. The
difficulty in the theorem is to preserve the homoclinic relation
between $P$ and $Q$ throughout the perturbation process.

The following result is a consequence of Theorem~\ref{t.complex}
and the fact that for $C^1$-generic diffeomorphisms two saddles in
the same chain recurrence class robustly belong to the same chain
recurrence class
 (see Section~\ref{s.robustizing} for the proof).

\begin{coro} \label{c.complexb}
There is a residual set $\cG$ of $\diff$ such that for every $f\in
\cG$ and every homoclinic class $H(P,f)$ of $f$ having periodic
points of different stable indices the following holds:

\noindent if the class $H(P,f)$ has no dominated splitting of
index $j$ then for any diffeomorphism $g$ in a neighborhood of $f$
the chain recurrence class of $P_g$ has no dominated splitting of
index $j$.
\end{coro}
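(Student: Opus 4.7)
The plan is to reduce the conclusion to a single robust open property produced by Theorem~\ref{t.complex} and then use a Baire-category argument to transfer it from ``perturbatively realizable'' to ``realized at $f$ itself''. The key elementary observation underpinning everything: if a hyperbolic periodic orbit $Q$ carries non-real multipliers $\la_j(Q), \la_{j+1}(Q)$ of common modulus with all other multipliers of strictly different moduli, then no $Df^{\pi(Q)}$-invariant real splitting of index $j$ at $Q$ satisfies the strict modulus separation required by a dominated splitting; hence any invariant set containing such a $Q$ admits no dominated splitting of index $j$, and this obstruction is $C^1$-open.

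I would take $\cG$ to be the intersection of three residual subsets of $\diff$: (i)~the set of $f$ for which every homoclinic class coincides with its chain recurrence class, as in \cite{BC:04}; (ii)~the set of $f$ for which any two hyperbolic saddles lying in a common chain recurrence class do so on a whole $C^1$-neighborhood of $f$; and (iii)~the Baire set $\bigcap_n (\diff \setminus \partial \cO_n)$ associated to a countable family of open sets $\cO_n \subset \diff$. The family $\{\cO_n\}$ is indexed by pairs of open sets drawn from a countable basis of $M$, a period bound $\pi \in \NN$, and an index $j$; each $\cO_n$ consists of diffeomorphisms admitting two hyperbolic periodic points $R, Q$ of period at most $\pi$ located in the prescribed open sets, homoclinically related, with $Q$ carrying the complex-pair multiplier structure above at index $j$. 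Each $\cO_n$ is open by continuation of hyperbolic orbits, continuity of multipliers, and openness of transverse intersections, so (iii) is residual.

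Now fix $f \in \cG$ and $H(P, f)$ as in the hypothesis. I would select a saddle $R \in H(P, f)$ of stable index different from $j$: either $R = P$ if $\mathrm{index}(P) \neq j$, or else some other saddle supplied by the different-indices hypothesis. Properties (i) and (ii) yield $H(R, f) = H(P, f)$ and a neighborhood $\cW_1$ of $f$ on which $R_g$ and $P_g$ belong to a common chain recurrence class. Applying Theorem~\ref{t.complex} to $R$ in the class $H(R, f) = H(P, f)$ (which has no dominated splitting of index $j$, and in which $R$ has stable index different from $j$) produces diffeomorphisms arbitrarily close to $f$ that belong to some $\cO_n$ in the family; hence $f \in \overline{\cO_n}$, and by (iii), $f \in \cO_n$ itself. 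A neighborhood $\cW_2 \subset \cO_n \cap \cW_1$ of $f$ then has the desired property: for each $h \in \cW_2$, the continuation $Q_h$ is homoclinically related to $R_h$, therefore lies in $H(R_h, h) \subset C(R_h, h) = C(P_h, h)$, and its complex pair at index $j$ forbids any dominated splitting of index $j$ on $C(P_h, h)$.

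The main obstacle is packaging the conclusions of Theorem~\ref{t.complex} --- whose output saddle $Q$ has a priori unbounded period and unspecified location --- into a single countable family of \emph{open} subsets of $\diff$ amenable to a Baire argument. Quantifying over open sets of $M$ from a countable basis and over period bounds $\pi \in \NN$ resolves this: every perturbation produced by the theorem must witness some concrete choice of these parameters and hence belongs to some $\cO_n$ in the countable family. The remainder of the argument is then routine continuation bookkeeping, together with the elementary eigenvalue computation flagged at the outset.
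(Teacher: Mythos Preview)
Your approach matches the paper's: it packages your Baire argument as Corollary~\ref{c.noname} (derived from Theorem~\ref{t.complex} by ``standard genericity arguments'') and then combines it with exactly the two properties from \cite{BC:04} that you list as (i) and (ii); your case split on whether $j$ equals the index of $P$ is the paper's as well.

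One imprecision in your Baire step: you index each $\cO_n$ by location and period-bound constraints on \emph{both} $R$ and $Q$. But the saddle $Q$ produced by Theorem~\ref{t.complex} for smaller and smaller perturbations may have unbounded period and varying location, so the resulting diffeomorphisms need not all lie in a single $\cO_n$, and the inference ``hence $f \in \overline{\cO_n}$'' for one fixed $n$ does not follow as written. The fix is immediate: index the $\cO_n$ only by a neighborhood and period for $R$ together with $j$, leaving $Q$ entirely unconstrained. The resulting set is still open (continue the witnessing $Q$), and now every sufficiently small perturbation from the theorem lands in the single $\cO_n$ determined by your fixed $R$ and $j$, so that $f \in \overline{\cO_n}$ and hence $f \in \cO_n$ by (iii).
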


\subsection{Robust non-domination and self-replication}\label{ss.viral}
In \cite[Definition 1.1]{BD:02}, for diffeomorphisms defined on
three-dimensional manifolds, we consider the following open
property for chain recurrence classes that we call \emph{Property
$\fC$}.

\begin{itemize}
\item[(i)] The class contains two transitive hyperbolic
sets $L$ and $K$ of different stable indices related by a robust
heterodimensional cycle.
\item[(ii)] Each of these sets $K$, $L$
contains a saddle with non-real multipliers.
\item[(iii)] Each of these sets contains a saddle whose
Jacobian is greater than one and a saddle whose Jacobian is less
than one.
\end{itemize}

A key ingredient in \cite{BD:02} is the notion of {\emph{universal
dynamics:}} Given a diffeomorphism $f$ with Property $\fC$ by
perturbation we can produce ``any type" of dynamics in a ball
isotopic to the identity (for large iterations of the
diffeomorphisms). In particular, after perturbations one can
re-obtain properties of any orientation preserving diffeomorphism
of a closed ball, see \cite[Definition 1.3]{BD:02}. As a
consequence, chain recurrence classes satisfying Property $\fC$
generate new different classes satisfying also this property. Thus
Property $\fC$ is a ``self-replicant" or ``viral" property. This
is the main motivation behind the definition of a viral property
in \cite[Sections 7.3-7.5]{B:bible}.

\begin{defi}[Viral property]
A property $\fQ$ of chain recurrence classes of saddles is said to
be {\emph{$C^k$-viral}} if for every diffeomorphism $f$ and every
saddle $P$ of $f$ whose chain recurrence class $C(P,f)$ satisfies
$\fQ$ the following conditions hold:

\smallskip

\noindent{\em{Robustness.}}
 There is a $C^k$-neighborhood $\cU$ of $f$ such that $C(P_g,g)$ also satisfies $\fQ$ for all $g\in \cU$.

\smallskip

\noindent{\em{Self-replication.}} For every $C^k$-neighborhood
$\cV$ of $f$ and for every neighborhood $V$ of $C(P,f)$ there are
a diffeomorphism $g\in \cV$ and a hyperbolic periodic point
$Q_g\in V$ of $g$ such that $C(Q_g,g)$ is different from
$C(P_g,g)$ and satisfies property $\fQ$. \label{d.bviral}
\end{defi}

As observed above, the existence of a robust homoclinic tangency
(associated to a transitive hyperbolic set in the class) is an
example of a $C^2$-viral property for chain recurrence classes in
dimension two.

As a consequence of the above results we now confirm
\cite[Conjecture 14]{B:bible} claiming that the property of robust
non-existence of any dominated splitting over a chain recurrence
class of a saddle is viral in the case that the class contains
saddles whose stable indices are different from $1$ and
$\dim(M)-1$. We formulate the following generalization of
Property~$\fC$.

\begin{defi}[Property $\fV$] \label{d.propertyS}
Given a saddle $P$ of a diffeomorphism $f$, the chain recurrence
class
 $C(P,f)$ of $P$ satisfies
\emph{Property $\fV$} if there is a $C^1$-neighborhood $\cU$ of
$f$ such that for all $g\in \cU$ the chain recurrence class
$C(P_g,g)$ of $P_g$ satisfies the following two conditions:
\begin{itemize}
\item {\rm (non-domination)}
$C(P_g,g)$
 does not have any dominated splitting,
\item {\rm (index variability)}
$C(P_g,g)$ contains a saddle $Q_g$ whose stable index is different
from the one of $P_g$.
\end{itemize}
\end{defi}

Observe  that the set of $C^1$-diffeomorphisms satisfying
Property~$\fV$ is indeed non-empty, see Section~\ref{ss.examples}.

\begin{theo}\label{t.bviral}
Property $\fV$ is $C^1$-viral  for chain recurrence classes.
\end{theo}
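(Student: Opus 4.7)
The \emph{robustness} clause of virality is essentially built into Property~$\fV$: the definition itself requires a $C^1$-neighborhood $\cU$ of $f$ on which both non-domination of $C(P_h,h)$ and existence of a saddle of differing stable index in $C(P_h,h)$ hold for \emph{every} $h\in\cU$. The same $\cU$ then witnesses $\fV$ for every $g\in\cU$. Hence the actual content of the theorem is the \emph{self-replication} clause.

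For self-replication, fix $f$ with $C(P,f)$ satisfying $\fV$, set $i=\inds(P)$, and using index variability pick a saddle $Q\in C(P,f)$ of stable index $j\ne i$. The plan is to manufacture, by $C^1$-perturbation followed by a blender/universal-dynamics insertion, a hyperbolic set $\Lambda\subset V$ whose internal dynamics is a rescaled copy of a class already satisfying $\fV$, chain-isolated from $C(P_g,g)$ by a trapping region. First, invoking the $C^1$-generic coincidence of homoclinic and chain recurrence classes for hyperbolic periodic points \cite{BC:04}, I would reduce to the case $H(P,f)=C(P,f)$ with $P$ and $Q$ homoclinically related. Then, since by hypothesis $H(P,f)$ admits no dominated splitting \emph{at any index}, I would apply Theorem~\ref{t.complex} successively for each index $k\in\{1,\ldots,d-1\}$ to produce, homoclinically related to $P_g$, saddles carrying non-real pairs of multipliers of equal modulus at each prescribed $k$. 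Each such saddle robustly obstructs a $k$-dominated splitting on nearby $H(P_h,h)$; iterating, the total lack of domination becomes $C^1$-robust and Property~$\fV$ is preserved throughout.

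Next, using index variability together with the robust absence of dominated splittings at indices around both $i$ and $j$, I would apply Theorem~\ref{t.main} at $P_g$ and its dual (at $Q_g$, working with $f^{-1}$) to produce, after a further perturbation, transitive hyperbolic sets $K,L\subset H(P_g,g)$ of different stable indices sustaining simultaneously a robust heterodimensional cycle and a robust homoclinic tangency. The resulting configuration is a higher-dimensional analog of Property~$\fC$ of \cite{BD:02}; the blenders produced by the cycle, combined with the complex multipliers constructed above, give access to the universal-dynamics mechanism of \cite{BD:02}: for a suitable cube $B\subset V$ and a large $N$, the return map of $g^N$ to $B$ realizes, after rescaling and a $C^1$-perturbation, any prescribed orientation-preserving diffeomorphism of the closed unit ball. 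I would then insert in $B$ a rescaled model of $(f,C(P,f))$ itself, together with a forward trapping region for $g^N$ around $B$, thereby producing a new saddle $R\in V$ whose chain recurrence class is chain-disjoint from $C(P_g,g)$ and still satisfies~$\fV$.

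The main obstacle lies in this final chain-isolation step: the universal-dynamics construction only controls finitely many returns of $g^N$ to $B$, and a priori pseudo-orbits of $g$ could escape the inserted copy and chain-reconnect it to $C(P_g,g)$ through other branches of the dynamics. The remedy is to build the inserted model with its own internal forward trapping region and to tune the $cs$-blender coming from the cycle so that it absorbs precisely the ``escaping'' orbits from $B$; the combination delivers a genuine forward filtration for $g$ around $\Lambda$, hence a bona fide new chain recurrence class, separate from $C(P_g,g)$ and carrying Property~$\fV$, as required.
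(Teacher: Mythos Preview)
Your proposal has a genuine gap at the self-replication step. You invoke the universal-dynamics mechanism of \cite{BD:02} to insert a rescaled model and to manufacture a trapping region, but that mechanism is established only in dimension three and, crucially, it requires saddles in the class with Jacobian both larger and smaller than one (item~(iii) of Property~$\fC$). Nothing in Property~$\fV$ guarantees such Jacobian conditions, and indeed Proposition~\ref{p.nonempty} exhibits classes satisfying~$\fV$ on which the Jacobian is uniformly strictly less than one. So the appeal to universal dynamics is not available, and your ``remedy'' for chain-isolation (tuning a $cs$-blender to absorb escaping orbits) is not a reference to any established result but a wish. The paper itself flags this issue explicitly in Section~\ref{ss.viral}, noting that the proof of Theorem~\ref{t.bviral} is ``quite different from the approach in \cite{BD:02}, where universal dynamics is the key ingredient.''

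The paper's actual mechanism is much more direct and sidesteps universal dynamics entirely. One first passes (Lemmas~\ref{l.VimpliesS} and~\ref{l.SimpliesV}) to an auxiliary open Property~$\fV''$, equivalent to~$\fV$ on an open and dense set, which packages the complex-multiplier saddles and the robust cycle. Then, for self-replication (Proposition~\ref{p.viral}), one picks a periodic point $Q$ with long period and no $k_0$-dominated splitting along its orbit, and applies Lemma~\ref{l.bgv} to turn $Q$ into a \emph{sink or a source}. This immediately produces a trapping open set $U$ with $g_1(\overline U)\subset U$ (or the reverse), giving the chain-isolation for free---no blender tuning required. One then restores the original cocycle on the orbit of $Q$, recreates a non-trivial homoclinic class via Lemma~\ref{l.gdcds}, and rebuilds all of~$\fV''$ inside $U$ using Corollary~\ref{c.1}, Theorem~\ref{t.main}/Corollary~\ref{c.main}, and Proposition~\ref{p.complex}. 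The separation survives all subsequent perturbations because the trapping condition is $C^0$-open. Your robustification of non-domination via Theorem~\ref{t.complex} is correct in spirit and matches part of Lemma~\ref{l.SimpliesV}, but the heart of the argument---how to detach the new class---is the sink/source trick, not an insertion of a rescaled model.
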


The following result is a consequence of Theorem~\ref{t.bviral}
and the properties of $C^1$-generic diffeomorphisms extending
\cite{BD:02}. In fact, the corollary holds for any viral property
of a chain recurrence class containing a saddle.

\begin{coro}
\label{c.bviral} Let $C(P,f)$ be a chain recurrence class
satisfying Property~$\fV$. Then there are a neighborhood $\cU$ of
$f$ and a residual subset $\cG_\cU$ of $\cU$ such that for every
$g\in \cG_{\cU}$
\begin{itemize}
\item
there are infinitely (countably) many pairwise disjoint homoclinic
classes, and
\item
there are uncountably many aperiodic chain recurrence classes.
\end{itemize}
\end{coro}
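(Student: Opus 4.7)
The plan is a Baire-category argument iterating the self-replication clause of Theorem~\ref{t.bviral}. Shrinking $\cU$ using the robustness clause of Definition~\ref{d.bviral}, I would first assume that $C(P_g,g)$ satisfies Property~$\fV$ for every $g\in\cU$. For each $n\geq 1$, I introduce $\cB_n\subset\cU$ as the set of diffeomorphisms $g$ admitting pairwise disjoint filtrating neighborhoods $U_1,\dots,U_n$, each containing a hyperbolic saddle $Q_i$ whose chain recurrence class $C(Q_i,g)\subset U_i$ satisfies Property~$\fV$. Since filtrating neighborhoods and the openness of $\fV$ both persist under small $C^1$-perturbations, $\cB_n$ is open in $\cU$.

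I would prove density of each $\cB_n$ by induction. Clearly $\cB_1=\cU$. Given $g\in\cU$ and a $C^1$-neighborhood $\cV$ of $g$, pick $g'\in\cV\cap\cB_n$ with witnesses $(Q_i,U_i)_{i=1}^n$ and apply the self-replication clause to the class $C(Q_1,g')$, choosing the target neighborhood $V$ inside $U_1$. This yields $g''\in\cV$ arbitrarily close to $g'$, so that $U_2,\dots,U_n$ remain filtrating, together with a saddle $R\in V\subset U_1$ such that $C(R,g'')\neq C(Q_1,g'')$ and $C(R,g'')$ still satisfies~$\fV$. Since the two distinct chain recurrence classes both lie inside the trapping region $U_1$, Conley's theorem --- applied after, if necessary, an additional arbitrarily small generic perturbation --- produces disjoint filtrating sub-neighborhoods of $U_1$ isolating them, which certifies $g''\in\cB_{n+1}$.

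Finally I would set $\cG_\cU=\cR\cap\bigcap_n\cB_n$, where $\cR\subset\diff$ is the standard $C^1$-residual set on which chain recurrence classes of hyperbolic periodic points coincide with their homoclinic classes (\cite[Remarque 1.10]{BC:04}) and on which the multifunction sending $g$ to its family of chain recurrence classes enjoys the upper-semicontinuity properties used in~\cite{BD:02}. Every $g\in\cG_\cU$ then admits infinitely many pairwise disjoint homoclinic classes, giving the first item. For the second, fix such a $g$ and a countable family $H_n=C(Q_n,g)$ of the disjoint classes provided by the $\cB_n$'s; any Hausdorff accumulation point of the $H_n$'s is contained in a chain recurrence class meeting no $H_n$ for $n$ large, hence aperiodic, and the genericity machinery of~\cite{BD:02} upgrades this to a continuum of distinct aperiodic classes (obtained from different converging subfamilies). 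The hard part will be the separation step inside $U_1$ at each stage of the induction: to certify that two chain recurrence classes freshly living inside a common filtrating region can be genuinely isolated by disjoint sub-filtrations, one needs Conley's theory combined with an arbitrarily small $C^1$-generic perturbation ruling out pseudo-orbit bridges between them.
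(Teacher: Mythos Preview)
Your argument for the first item is correct and essentially matches the paper's: both build open dense sets $\cB_n$ (the paper's $\cU_n$) of diffeomorphisms with at least $n$ pairwise separated classes carrying the viral property, then intersect with a residual set where chain recurrence classes of saddles equal homoclinic classes. One minor difference: the paper works with the auxiliary Property~$\fV''$ and invokes Proposition~\ref{p.viral}, whose item~(\ref{i.pviral3bis}) directly hands you a trapping (hence filtrating) neighborhood separating the new class from the old one; you instead appeal to Conley theory to separate two distinct chain recurrence classes after the fact. Both are fine, though your parenthetical ``additional arbitrarily small generic perturbation'' is unnecessary --- distinct chain recurrence classes are always separated by filtrating neighborhoods for the given map.

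The second item, however, has a genuine gap. Taking Hausdorff accumulation points of a single countable sequence $(H_n)$ does not produce uncountably many distinct chain recurrence classes: a Hausdorff limit need not lie in a single class, a class containing such a limit need not be aperiodic (it could contain periodic points outside every $H_n$), and different subsequences can converge to the same limit set. The ``genericity machinery of~\cite{BD:02}'' you invoke is not a limit argument of this kind; it is precisely the binary-tree construction the paper carries out here. Namely, at each stage one splits \emph{every} existing filtrating neighborhood into two nested disjoint filtrating sub-neighborhoods $V_{i_1,\dots,i_n,0}$ and $V_{i_1,\dots,i_n,1}$, each containing a class with Property~$\fV''$. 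Then to every infinite binary sequence $\iota$ one associates $K_\iota=\bigcap_k \overline{V_{i_1,\dots,i_k}}$, which contains a chain recurrent point, and distinct sequences give points in disjoint filtrating regions, hence in distinct chain recurrence classes. Since the diffeomorphism (taken in $\cG'$) has only countably many periodic points, uncountably many of these classes are aperiodic. Your linear family $(\cB_n)$ does not support this: you must organize the induction as a dyadic tree, replicating inside \emph{each} of the $2^n$ regions at step $n$, not just inside one of them.
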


Indeed the homoclinic classes obtained in the corollary can be
chosen to also satisfy Property~$\fV$.
 The
proofs of Theorem~\ref{t.bviral} and Corollary~\ref{c.bviral} are
in Section~\ref{s.viral}.

Let us observe that nature of the proof of Theorem~\ref{t.bviral}
is quite different from the approach in \cite{BD:02}, where
universal dynamics is the key ingredient. In \cite{BD:02} this
universal dynamics is obtained by considering saddles in the chain
recurrence class whose Jacobians are larger and smaller than one,
respectively. A restriction of this construction is that all
Lyapunov exponents of the aperiodic classes obtained in
\cite{BD:02} are zero. This follows from the fact that one
considers maps whose ``returns" are close to the identity. Here we
use directly the self-replication property.  This allows us to
obtain aperiodic classes with regular points having Lyapunov
exponents uniformly bounded away from zero. See \cite[Section
7.4]{B:bible}, specially Problem 6, for further discussion.

 Finally,
bearing in mind the results in \cite{S:pre} and
Corollary~\ref{c.main}, we introduce the following variation of
Property~$\fV$ for
 diffeomorphisms defined on manifolds of dimension $d\ge 4$.

\begin{defi}[Property $\fV'$] \label{d.propertySprime}
Given a saddle $P$ of a diffeomorphism $f$  the chain recurrence
class
 $C(P,f)$ of $P$ has
\emph{Property $\fV'$} if there is a $C^1$-neighborhood $\cU$ of
$f$ such that for all $g\in \cU$ the chain recurrence class
$C(P_g,g)$ of $P_g$ satisfies the following two conditions:
\begin{itemize}
\item
$C(P_g,g)$
 does not have any dominated splitting and
\item
$C(P_g,g)$ contains a saddle with  stable index $i\not\in\{1,
\dim (M)-1\}$.
\end{itemize}
\end{defi}

Corollary~\ref{c.main} implies that in this case, after a
perturbation, the chain recurrence class $C(P_g,g)$ robustly
satisfies the index variability condition. Thus, after a
perturbation, Property~$\fV'$ implies Property~$\fV$. In fact, we
will see that these two properties are ``essentially equivalent",
see Lemmas~\ref{l.VimpliesS} and~\ref{l.SimpliesV}. Finally, we
have the following:

\begin{rema}\label{r.ss'}
Theorem~\ref{t.bviral} and Corollary~\ref{c.bviral} hold for
Property~$\fV'$.
\end{rema}

\subsection*{Organization of the paper}\label{ss.organization}
We first observe that we will use systematically several
$C^1$-perturbation results imported from \cite{G:10,G:pre,BB:pre}.
These results allow us to realize dynamically perturbations of
cocycles associated to the derivatives of diffeomorphisms along
periodic orbits (see specially Section~\ref{ss.gfranks} and
\ref{ss.dominatedperiodic}).

\smallskip

\noindent $\bullet$
In Section~\ref{s.homohetero}  we recall results about the
generation of homoclinic tangencies and heterodimensional cycles
associated to homoclinic classes.

\smallskip

\noindent $\bullet$ An ingredient of our paper is the notion of an
{\emph{adapted perturbation}} of a diffeomorphism, that is, a
small perturbation of a
 diffeomorphism throughout the orbit of  a periodic point
that preserves some  homoclinic relations and some prescribed
dynamical properties of a given homoclinic class, (see
Definition~\ref{d.adapted}). An essential feature of adapted
perturbations is that one can perform simultaneously finitely many
of them preserving some prescribed properties of the homoclinic
class. These perturbations are introduced in
Section~\ref{s.adaptedfranks}.

\smallskip

\noindent $\bullet$
Using adapted perturbations we prove in
Section~\ref{s.lyapunovperiodic} two important technical results
(Propositions~\ref{p.gbobo} and \ref{p.weak}) claiming that the
 lack of domination of a homoclinic class yields periodic orbits having
multiple  Lyapunov exponents and weak hyperbolicity.

\smallskip

\noindent $\bullet$ In Sections~\ref{s.robustizing} and
\ref{s.proofof}, in the non-dominated setting we get periodic
orbits inside a homoclinic class having non-real multipliers and
prove Theorem~\ref{t.complex}. This proof is based on
Proposition~\ref{p.complex} whose proof is the most difficult step
of the paper.

\smallskip

\noindent $\bullet$ In Section~\ref{s.formation} we obtain
homoclinic intersections associated to strong invariant manifolds
of periodic points that will allow us to get heterodimensional
cycles  and finally prove Theorem~\ref{t.main} in
Section~\ref{s.proofofmain}.

\smallskip

\noindent $\bullet$
Finally, we study viral properties of chain recurrence classes and
prove Theorem~\ref{t.bviral} and Corollary~\ref{c.bviral} in
Section~\ref{s.viral} .


\section{Homoclinic tangencies and heterodimensional cycles}
\label{s.homohetero}

In this section we recall some results about generation of
homoclinic tangencies and robust heterodimensional cycles
associated to homoclinic classes.

\subsection{Homoclinic tangencies}\label{ss.homoclinictangencies}
Next lemma states the relation between the lack of domination over
a periodic orbit and the generation of homoclinic tangencies.

\begin{lemm}[{\cite[Theorem 3.1]{G:10}}]\label{l.gdcds}
For any $K>1$, $\pes>0$, and  $d\in \NN$, there are
constants $k_0$ and $\ell_0$ with the following property.
\begin{itemize}
\item
For every $f\in \diff$ with $\dim (M)=d$ such that the norms of $Df$ and $Df^{-1}$
are both bounded by $K$, and
\item
for every  periodic point $P$ of $f$ of saddle-type such that
\begin{itemize}
\item the period of $P$ is larger than $\ell_{0}$ and
\item
the stable/unstable splitting $E^s(f^i(P))\oplus E^u(f^i(P))$ over
the orbit of $P$ is not $k_{0}$-do\-mi\-na\-ted,
\end{itemize}
\end{itemize}
there is an $\pes$-perturbation $g$ of $f$ whose support is
contained in an arbitrarily small neighborhood of the orbit of $P$
and such that the stable and unstable manifolds $W^s(P,g)$ and
$W^u(P,g)$ of $P$ have a homoclinic tangency.

Moreover, if $Q$ is homoclinically related to $P$ for $f$ then the
perturbation $g$ can be chosen such that $Q_g$ and $P$ are
homoclinically related (for $g$).
\end{lemm}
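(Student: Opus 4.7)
The plan is to realize the lemma as a consequence of a Franks-type perturbation result applied to the linear cocycle $(Df_{f^i(P)})_{i}$ along the orbit of $P$, where the non-domination hypothesis provides the geometric room needed to produce a tangency. First, I would translate the hypothesis into cocycle language: the stable/unstable splitting $E^s\oplus E^u$ being not $k_0$-dominated means that there exist an index $i$ along the orbit and unit vectors $u\in E^s_{f^i(P)}$, $w\in E^u_{f^i(P)}$ such that $\|D_{f^i(P)}f^{k_0}u\|/\|D_{f^i(P)}f^{k_0}w\|>\tfrac{1}{2}$. This bounded ratio means that, at cocycle level, the two one-dimensional subcocycles obtained by restricting $Df$ to $\mathrm{span}(u)$ and $\mathrm{span}(w)$ are nearly conformal on some window of length $k_0$.

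Next I would use a suitably refined Franks lemma (a $C^1$ perturbation of $f$ supported in an arbitrarily small neighborhood of the orbit of $P$ realizes any prescribed small perturbation of the cocycle $Df$ along this orbit) to compose $Df$ along the orbit with small rotations $R_i\in SO(T_{f^i(P)}M)$ of total angular cost at most $\varepsilon$. Because of the near-conformality provided by non-domination, one can spread a sequence of tiny rotations over many iterates of the orbit — here is where a minimal period $\ell_0$, large relative to $k_0$ and to $\log(\varepsilon^{-1})$, is needed — so that the product cocycle over one full period maps a ray of $E^s_P$ into $E^u_P$ (or more precisely, so that some iterate of $E^s_P$ becomes tangent to some past iterate of $E^u_P$ at a well chosen point near the orbit). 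Choosing linearizing charts in a small tubular neighborhood of the orbit and working with the local stable and unstable manifolds inside these charts, the tangency between the perturbed local iterates of $E^s$ and $E^u$ gives a homoclinic tangency of $W^s(P,g)$ and $W^u(P,g)$.

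The remaining issue is preserving a pre-existing homoclinic relation with a saddle $Q$. For this I would shrink the support of the perturbation to a neighborhood $U$ of the orbit of $P$ so small that a given finite collection of transverse intersection points realizing the homoclinic relation $Q\pitchfork P$ lies outside $U$ and outside all their immediate forward/backward iterates used in the argument; transverse intersections are stable under $C^1$-small perturbations, so they persist for $g$. A care is needed because the local stable/unstable laminations at $P$ are modified by the perturbation, but since we only need \emph{one} transverse intersection of each type to certify the homoclinic relation, and we may select such intersections in the part of $W^{s/u}(P,f)$ obtained by iterating local manifolds through regions disjoint from $U$, these intersections survive. The main obstacle is the central cocycle-perturbation step — quantifying precisely how a bounded non-domination ratio over one window of length $k_0$, amplified by a long period, can be converted into a rotation realizing a tangency while keeping each individual Franks perturbation below $\varepsilon$; this is exactly the content of the Gourmelon cocycle perturbation machinery, and I would invoke it rather than rebuild it.
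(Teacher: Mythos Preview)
This lemma is not proved in the paper: it is simply quoted from \cite{G:10} (Theorem~3.1 there), so there is no proof in the paper to compare against. Your sketch of the tangency-creation step is along the right lines and is indeed the philosophy of \cite{G:10}: non-domination over a window of length $k_0$ lets one spread many tiny rotations over a long orbit so that, cumulatively, a stable direction is rotated onto an unstable one, and a Franks-type realization turns this cocycle picture into an actual tangency. Note however that at the end you say you would ``invoke the Gourmelon cocycle perturbation machinery rather than rebuild it'' --- but that machinery \emph{is} the content of the lemma you are trying to prove, so this is circular unless you really carry out the quantitative rotation argument.

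There is a genuine gap in your treatment of the ``Moreover'' clause. You argue that a transverse heteroclinic point between $P$ and $Q$ survives provided it lies outside the support $U$ of the perturbation and ``in the part of $W^{s/u}(P,f)$ obtained by iterating local manifolds through regions disjoint from $U$''. This does not work: every point of $W^s(P,f)$ has forward orbit converging to the orbit of $P$, hence enters $U$; and the perturbation, being supported exactly near the orbit of $P$, changes the local stable and unstable manifolds of $P$ themselves. So the global manifolds $W^{s/u}(P,g)$ can move substantially even at points far from $U$, and there is no reason the old heteroclinic point remains on them. Transversality alone (stability under $C^1$-small perturbations of $f$) does not help here, because what matters is not that $g$ is $C^1$-close to $f$ globally, but that the \emph{manifolds} $W^{s/u}(P,g)$ stay close to $W^{s/u}(P,f)$ --- and a Franks perturbation near $P$ can in principle swing these manifolds wildly. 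The actual mechanism in \cite{G:10} (and isolated as the Generalized Franks Lemma in \cite{G:pre}, quoted in the paper as Lemma~\ref{l.gourmelon}) is to reach the target cocycle through a \emph{path of hyperbolic cocycles} starting at $Df$; along such a path the stable and unstable bundles vary continuously, and one can then control the local invariant manifolds so that prescribed compact pieces $K^s\subset W^s(P,f)$, $K^u\subset W^u(P,f)$ remain inside $W^s(P,g)$, $W^u(P,g)$. This path-of-cocycles idea is the missing ingredient in your argument, and it is not a detail --- it is precisely the technical novelty that distinguishes \cite{G:10,G:pre} from the classical Franks lemma.
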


\begin{rema}
\label{r.gdcds} Lemma~\ref{l.gdcds} implies that the perturbation
$g$ of $f$ can be chosen such that the saddle $P$ has a homoclinic
tangency and its homoclinic class $H(P,g)$ is non-trivial.
Moreover, the orbit of  tangency can be chosen inside the
homoclinic class $H(P,g)$.
\end{rema}

\subsection{Robust heterodimensional cycles}\label{ss.robustcycles}

Let us observe that a homoclinic class $H(P,f)$ may contain
saddles of different indices. But, in principle, it is not
guaranteed that such a  property still holds for perturbations
of $f$. We next collect some results from \cite{BDK:pre} that will
allow us to get such a  property in a robust way.

We say that a heterodimensional cycle associated to a pair of
transitive hyperbolic sets has {\emph{coindex one}} if the
$s$-indices of theses sets differ by one.

\begin{lemm}[\cite{BDK:pre}]
\label{l.bodiki} Let $f\in \diff$ be a diffeomorphism having
a coindex one
heterodimensional cycle
associated to a
pair of hyperbolic periodic points $P$ and $Q$ such that the homoclinic class
$H(P,f)$ is non trivial. Then there is a diffeomorphism $g$ arbitrarily $C^1$-close
to $f$ with a pair of hyperbolic transitive sets $L_g$ and $K_g$
having a robust heterodimensional cycle and containing the
continuations $P_g$ and $Q_g$ of $P$ and $Q$, respectively.
\end{lemm}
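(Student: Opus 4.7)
The plan is to reduce to a standard blender-horseshoe construction. Write $i = \mathrm{ind}(P)$ and, after possibly replacing $f$ by $f^{-1}$, assume $\mathrm{ind}(Q) = i+1$. The coindex-one cycle supplies two heteroclinic points: a point $A \in W^u(P) \cap W^s(Q)$, for which the dimensions sum to $d+1$, and a point $B \in W^s(P) \cap W^u(Q)$, for which the dimensions sum only to $d-1$. The intersection at $A$ may be assumed transverse (after a small perturbation if necessary), so it persists robustly; the real difficulty is to keep the intersection at $B$ robust, since there the dimensions are deficient by one. The natural tool is a blender associated to $P$ whose stable set \emph{behaves as if} it were $(i+1)$-dimensional.

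Since $H(P,f)$ is non-trivial, $P$ admits a transverse homoclinic loop. Using Franks-type perturbations supported in arbitrarily small neighborhoods of the orbit of $P$ (as in \cite{G:10,G:pre,BB:pre}), I would first arrange that the weakest contracting multiplier $\la_i(P)$ is real, simple, and arbitrarily close to $1$ in modulus, while the remaining contracting multipliers stay strictly smaller; the adapted-perturbation tools developed in Section~\ref{s.adaptedfranks} allow such a modification without destroying the homoclinic loop of $P$ or either of the two heteroclinic connections $A$ and $B$. Next, using the two branches of the homoclinic loop together with the weak central contraction just created, I would perform the classical blender-horseshoe construction: two suitably chosen return maps act on a central cube as affine contractions with controlled overlap in the central direction, producing a transitive hyperbolic horseshoe $L_g \ni P_g$ of stable index $i$ enjoying the characteristic $C^1$-open covering property --- every $C^1$-disk of dimension $d-i-1$ that crosses the activation region transversely to the stable cone field meets $W^s(L_g)$, and continues to do so under small $C^1$-perturbations.

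It remains to exploit the blender and close the cycle. By the geometry near $B$, a small piece of $W^u(Q_g)$ crosses the activation region of $L_g$ in the required central direction, so the blender property yields $W^u(Q_g) \cap W^s(L_g) \neq \emptyset$ robustly. The opposite intersection $W^u(L_g) \cap W^s(Q_g) \neq \emptyset$ is robust from $A$ by the excess-dimension transversality. Taking $K_g$ to be the periodic orbit of $Q_g$ (itself a transitive hyperbolic set), the pair $(L_g, K_g)$ then realizes a robust heterodimensional cycle, as required.

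The main obstacle is the blender construction itself: one must simultaneously (i) weaken the central contraction at $P$ without breaking either the homoclinic loop of $P$ or the two heteroclinic connections with $Q$, and (ii) verify that the return maps along the two branches of the homoclinic loop actually organize themselves into a genuine blender-horseshoe IFS in suitable Markov coordinates. Step (i) is precisely of the adapted-perturbation type introduced in Section~\ref{s.adaptedfranks}, while step (ii) is a delicate but by now standard iterated-function-system analysis that forms the technical heart of \cite{BDK:pre}.
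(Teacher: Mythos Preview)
First, the paper does not prove this lemma: it is imported from \cite{BDK:pre} and used as a black box in Section~\ref{ss.robustcycles}, so there is no in-paper proof to compare against. Your blender outline is the right general mechanism and is indeed what underlies \cite{BD:08} and \cite{BDK:pre}, but the way you manufacture the blender has a genuine gap.

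You cannot ``arrange that $\la_i(P)$ is arbitrarily close to $1$'' by an arbitrarily small $C^1$-perturbation. A Franks-type perturbation of size $\varepsilon$ moves each Lyapunov exponent by at most $O(\varepsilon)$; if $P$ is, say, a fixed point with $|\la_i(P)|=1/2$, this step simply fails. The tools of Section~\ref{s.adaptedfranks} and \cite{G:10,G:pre,BB:pre} are about preserving homoclinic relations under \emph{small} cocycle perturbations and do not let you cross a fixed gap in the exponents. Nor does the non-triviality of $H(P,f)$ rescue the argument: nothing in the hypotheses prevents $H(P,f)$ from being a uniformly hyperbolic horseshoe in which every saddle homoclinically related to $P$ has exponents uniformly bounded away from zero. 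The weak central eigenvalue needed for the blender has to come from the \emph{cycle}: orbits that shadow $P$ and then $Q$ have $(i+1)$-th exponent interpolating between $\chi_{i+1}(P)>0$ and $\chi_{i+1}(Q)<0$, so unfolding the cycle produces periodic saddles with central exponent as close to zero as one likes, and the blender is built around those (this is the content of \cite{BD:08}). The hypothesis that $H(P,f)$ is non-trivial then enters only at the end, to homoclinically link the resulting blender back to $P$ so that both sit in a common transitive hyperbolic set $L_g$. A secondary issue worth flagging: for $W^s(L_g)$ to robustly meet $(d{-}i{-}1)$-disks you want the blender's one-dimensional center to be the weakest \emph{expanding} direction ($\la_{i+1}$ close to $1$), not the weakest contracting one; weakening $\la_i$ produces a blender whose distinctive open property lies on the $W^u$ side and is of no use for the quasi-transverse connection at $B$.
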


There is the following
 consequence of this lemma for $C^1$-generic systems:

\begin{corol}[\cite{BDK:pre}]
\label{c.bdk} There is a residual subset $\cG$ of $\diff$ such
that for every  diffeomorphism $f\in \cG$ and every pair of
periodic points $P$ and $Q$ of stable indices $i<j$ in the same
homoclinic class there is a (finite) sequence of transitive
hyperbolic sets $K_i,K_{i+1}\dots, K_j$ 
such that
\begin{itemize}
\item
$P\in K_i$, $Q\in K_j$, 
 \item 
the  stable index of  $K_n$ is  $n$, $n=i,i+1,\dots, j$,
and
\item
the sets
$K_k$ and $K_{k+1}$ have a robust heterodimensional cycle
for all $k=i,\dots, j-1$.
\end{itemize}
\end{corol}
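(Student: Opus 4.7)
The plan is to obtain the residual set $\cG$ as a countable intersection of open-dense subsets of $\diff$ encoding three kinds of information: (a) the generic identification of chain recurrence and homoclinic classes, (b) an index-filling property for homoclinic classes, and (c) a perturbative existence statement for robust coindex-one cycles between consecutive-index saddles in a common homoclinic class. For (a) I would use the standard residual set of \cite{BC:04} on which $C(P,f)=H(P,f)$ for every hyperbolic saddle $P$. For (b) I would invoke an Abdenur--Bonatti--Crovisier-type residual set on which every homoclinic class that contains saddles of two distinct indices $i<j$ contains, for each intermediate $i\le k\le j$, a saddle of stable index $k$ homoclinically related to the original ones. Items (a) and (b) together reduce the corollary to the consecutive-index case $j=i+1$: they produce a chain $R_i=P,\,R_{i+1},\dots,R_j=Q$ of saddles of consecutive indices, pairwise homoclinically related.

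For (c) the standard device is an enumeration. Choose a countable basis $\{V_m\}$ of $\diff$, and for each $m$ choose a pair $P_m,Q_m$ of hyperbolic periodic orbits of consecutive stable indices whose continuations are defined throughout $V_m$. Define
\[
\cO_m=(\diff\setminus\overline{V_m})\,\cup\,\{f\in V_m:\exists\text{ transitive hyperbolic sets }K\ni(P_m)_f,\,L\ni(Q_m)_f\text{ with a robust coindex-one cycle}\}.
\]
Openness is immediate since robust cycles form a $C^1$-open condition. For density inside $V_m$, if $(P_m)_f$ and $(Q_m)_f$ belong to a common homoclinic class of $f$, then Hayashi's connecting lemma (in the version of \cite{BC:04}) produces an arbitrarily small $C^1$-perturbation having a coindex-one heterodimensional cycle between the continuations of $P_m$ and $Q_m$; Lemma~\ref{l.bodiki} then turns this cycle into a robust one between transitive hyperbolic sets containing those continuations, landing in the second alternative of $\cO_m$. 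If instead the two continuations already lie in distinct homoclinic classes of $f$, then arbitrarily small perturbations that separate their invariant manifolds still belong to $\cO_m$. Hence each $\cO_m$ is open-dense, and $\cG$ is the intersection of the three residual sets obtained from (a), (b), and $\bigcap_m \cO_m$.

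The conclusion for $f\in\cG$ is then immediate: apply (a)+(b) to obtain the chain $R_i,\dots,R_j$, and for each consecutive pair $(R_k,R_{k+1})$ use the $\cO_m$ property with $V_m$ chosen as a small neighborhood of $f$ carrying their continuations to obtain the transitive hyperbolic sets $K_k,K_{k+1}$ realizing the robust coindex-one cycle between $R_k$ and $R_{k+1}$. The main obstacle I expect will be Step~(c): the condition ``two hyperbolic continuations lie in the same homoclinic class'' is not $C^1$-open, so care is needed in the enumeration so that the open-dense property of $\cO_m$ really holds. The usual remedy, already implicit in the argument behind Lemma~\ref{l.bodiki}, is to detect the common homoclinic class through a prescribed transverse heteroclinic combinatorics between the two orbits, a condition which is itself $C^1$-open and which the connecting lemma renders dense inside the locus where the two continuations share a homoclinic class.
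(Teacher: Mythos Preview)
The paper does not actually prove Corollary~\ref{c.bdk}; it is imported from \cite{BDK:pre} and stated only as a ``consequence'' of Lemma~\ref{l.bodiki}. So there is no proof in the paper to compare against, and your outline is in fact the standard route one would take: index filling via \cite{ABCDW:07}, the connecting lemma of \cite{BC:04} to manufacture coindex-one cycles between consecutive-index saddles in the class, and Lemma~\ref{l.bodiki} to make each such cycle robust. That overall architecture is sound.

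There is, however, a genuine gap in your Step~(c). As you define it, $\cO_m$ is the union of $\diff\setminus\overline{V_m}$ with the set of $f\in V_m$ admitting a robust cycle between hyperbolic sets containing $(P_m)_f$ and $(Q_m)_f$. You yourself note that ``same homoclinic class'' is not open and propose to replace it by an open heteroclinic combinatorial condition. But that only addresses openness, not density: if $P_m$ and $Q_m$ lie in \emph{different} chain recurrence classes throughout an open subset of $V_m$ (e.g.\ they are separated by a filtrating neighborhood), then no perturbation will ever create a cycle between them, and your $\cO_m$ misses that whole open set. Your sentence ``arbitrarily small perturbations that separate their invariant manifolds still belong to $\cO_m$'' is simply false for $\cO_m$ as written. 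The standard repair is to add a third, open alternative to $\cO_m$: the pair $(P_m)_f,(Q_m)_f$ is separated by a filtrating neighborhood. Density then follows from the dichotomy of \cite{BC:04} (on a residual set, two hyperbolic saddles either robustly share a chain recurrence class or are separated by a filtration), and in the final application you only ever invoke the ``robust cycle'' branch, since you start from saddles known to share a homoclinic class.

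Two smaller points. First, your chain $R_i=P,\dots,R_j=Q$ cannot be ``pairwise homoclinically related'': the homoclinic relation only makes sense between same-index saddles. What \cite{ABCDW:07} gives is that the $R_k$ all lie in the common homoclinic class $H(P,f)=H(Q,f)$, which is what you actually use. Second, applying your $\cO_m$'s to consecutive pairs $(R_k,R_{k+1})$ yields, for each $k$, transitive hyperbolic sets $L_k\ni R_k$ and $M_k\ni R_{k+1}$ with a robust cycle; to get a \emph{single} sequence $K_i,\dots,K_j$ as in the statement you must merge $M_{k-1}$ and $L_k$ (both of index $k$, both containing $R_k$) into one transitive hyperbolic set $K_k\subset H(R_k,f)$. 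This is routine---enlarge to a horseshoe containing both, and robust cycles persist under enlargement of the hyperbolic sets---but it should be said.
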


\section{Adapted perturbations and generalized Franks' lemma} \label{s.adaptedfranks}

In this section, we collect some results about $C^1$-perturbations
of diffeomorphisms. Observe that if $g_1,\dots,g_n$ are
$\pes$-perturbations of $f$ with disjoint supports $V_1,\dots,V_n$
then there is an $\eps$-perturbation $g$ of $f$ supported in the
union of the sets $V_i$ such that $g$ coincides with $g_i$
over the set $V_i$.

\subsection{Adapted perturbations}\label{ss.adapted}

We next  introduce a kind of perturbation of 
diffeomorphisms along periodic orbits that preserves 
homoclinic relations. Moreover, these perturbations can be
performed simultaneously and independently along different
periodic orbits.

In what follows, given $\vro>0$, we denote by $W^{s,u}_\vro(P,f)$
the stable/unstable manifolds of size $\vro$ of the orbit of $P$.

\begin{defi}[Adapted perturbations]
Consider a property $\fP$ about periodic points. Given $f\in
\diff$, a pair of hyperbolic periodic points $P$ and $Q$ of $f$
that are homoclinically related,
and a neighborhood $\cU\subset \diff$ of $f$ 
 we say that there is a
\emph{perturbation of $f$ in $\cU$ along the orbit of $Q$ that is
adapted to $H(P,f)$ and property $\fP$} if
\begin{itemize}
\item
for  every neighborhood $V$ of the orbit of $Q$ and
\item
for every $\vro>0$ and every pair of compact sets $K^s\subset W^s_\vro(Q,f)$ and
$K^u\subset W^u_\vro(Q,f)$ disjoint from $V$
\end{itemize}
 there is a diffeomorphism $g\in \cU$ such that:
\begin{itemize}
\item $g$ coincides with $f$ outside $V$ and along the $f$-orbit of
$Q$,
\item the points $P_g$ and $Q_g$ are homoclinically related for $g$,
\item the sets $K^s,K^u$ are contained in $W^s_\vro(Q,g)$ and
$W^u_\vro(Q,g)$, respectively, and
\item the saddle $Q$ satisfies property $\fP$.
\end{itemize}

When the neighborhood $\cU$ of $f$ is the set of diffeomorphisms
that are $\pes$-$C^1$-close to $f$ we say that $g$ is an
\emph{$\pes$-perturbation of $f$ along the orbit of $Q$ that is
adapted to $H(P,f)$ and property $\fP$}.
 \label{d.adapted}
\end{defi}

Examples of property $\fP$ for  periodic points are the existence
of non-real multipliers and negative Lyapunov exponents.

\subsection{Generalized Franks' lemma}
\label{ss.gfranks}

We need the following extension of the so-called Franks Lemma
\cite{F:71} about dynamical realizations of
perturbations of cocycles along periodic orbits. The  novelty of
this extension is that besides the dynamical realization of the cocycle throughout a periodic orbit
the perturbations also preserve some homoclinic/heteroclinic intersections. Next lemma is a particular case of \cite[Theorem~1]{G:pre} and
is a key tool for constructing adapted perturbations. Recall that
a linear map $B\in GL(d,\RR)$ is {\emph{hyperbolic}} if every  eigenvalue $\la$ of $B$ satisfies $|\la|\ne 1$.

\begin{lemm}[Generalized Franks' Lemma, \cite{G:pre}]\label{l.gourmelon}
Consider $\pes>0$, a diffeomorphism $f\in \diff$ and a hyperbolic periodic point $Q$ of period $\ell=\pi(Q)$
of $f$. Then
\begin{itemize}
\item for any
one-parameter  family of linear maps
$(A_{n,t})_{n=0,\dots,\ell-1, \, t\in [0,1],}$, $A_{n,t}\in
\mathrm{GL}(d, \RR)$, $d=\dim (M)$, such that
\begin{enumerate}
\item \label{ifg1}
$A_{n,0}=Df (f^n(Q))$,
\item \label{ifg2}
for all $n=0,\dots,\ell-1$ and all $t\in [0,1]$ it holds
$$
\max \, \left\{ \| Df(f^n(Q)) -A_{n,t} \|, \, \| Df^{-1}(f^n(Q))
-A^{-1}_{n,t} \| \right\}<\varepsilon,
$$
\item \label{ifg3}
$B_t=A_{\ell-1,t}\circ\cdots\circ A_{0,t}$ is hyperbolic for all
$t\in [0,1]$, 
\end{enumerate}
\item
for  every neighborhood $V$ of the orbit of $Q$, every $\vro>0$,
and every pair of compact sets $K^s\subset W^s_\vro(Q,f)$ and $K^u
\subset W^u_\vro (Q,f)$ disjoint from $V$,
\end{itemize}
there is an $\pes$-perturbation $g$ of $f$ such that
\begin{enumerate}
\item[(a)] $g$ and $f$ coincide throughout the orbit of $Q$ and outside
$V$,
\item[(b)]
$K^s\subset W^s_\vro (Q,g)$ and $K^u \subset W^u_\vro(Q,g)$, and
\item[(c)]
$Dg(g^n(Q))=Dg(f^n(Q))=A_{n,1}$ for all $n=0,\dots,\ell-1$.
\end{enumerate}
\end{lemm}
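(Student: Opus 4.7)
The plan is to combine the classical Franks lemma, which realizes the cocycle perturbation $A_{n,1}$ along the orbit of $Q$ but gives no direct control on the position of the local invariant manifolds of $Q$, with a correction isotopy supported slightly away from the orbit that aligns those local manifolds with the prescribed compact pieces $K^s$ and $K^u$. The key hypothesis that makes the correction small enough is the uniform hyperbolicity along the entire path $(B_t)_{t\in[0,1]}$ given by~(\ref{ifg3}).

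First I would run a one-parameter version of the classical Franks argument: for each $t\in[0,1]$, the cocycle $A_{n,t}$ is realized by a diffeomorphism $g_t$ that coincides with $f$ outside an arbitrarily thin tubular neighborhood $V'$ of the orbit of $Q$ and satisfies $Dg_t(f^n(Q))=A_{n,t}$; the map $t\mapsto g_t$ is continuous in the $C^1$-topology, $g_0=f$, and hypothesis~(\ref{ifg2}) ensures $d_{C^1}(f,g_t)<\varepsilon$ uniformly. Since $B_t$ is hyperbolic for every $t$, the orbit of $Q$ remains hyperbolic for every $g_t$, and the stable manifold theorem provides local stable and unstable manifolds $W^{s,u}_{\loc}(Q,g_t)$ of a uniform size $\vro'\leq\vro$ depending continuously on $t$ in the $C^1$ topology, with $W^{s,u}_\loc(Q,g_0)\subset W^{s,u}_\vro(Q,f)$.

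Next I would shrink $V'$ until it contains these local invariant manifolds of size $\vro'$ and choose a slightly larger tubular neighborhood $V$ of the orbit that is still disjoint from $K^s\cup K^u$. On the overlap $V'$, the manifold $W^s_{\loc}(Q,g_1)$ is $C^1$-close to $W^s_{\loc}(Q,f)$. I would then construct a $C^1$-small isotopy $\Psi^s$ supported in $V\setminus V'$ and equal to the identity near the orbit, carrying a fundamental domain of $W^s_{\loc}(Q,g_1)$ just inside $V'$ to a fundamental domain of $W^s(Q,f)$ that continues outside $V'$ and leads to $K^s$; an analogous isotopy $\Psi^u$ would be built for the unstable side. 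The transversality of $W^s$ and $W^u$ at $Q$, together with their transversality to $\partial V'$ away from $Q$, allows one to choose $\Psi^s$ and $\Psi^u$ with disjoint supports. The diffeomorphism $g=\Psi^u\circ\Psi^s\circ g_1$ then fulfils (a), (b) and (c) by construction.

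The hard part will be to make $\Psi^s$ and $\Psi^u$ simultaneously $C^1$-small, so that the total perturbation stays within the $\varepsilon$-neighborhood of $f$, and equal to the identity near the orbit, so that the Franks data $A_{n,1}$ from~(c) is preserved. This is exactly what hypothesis~(\ref{ifg3}) delivers: the hyperbolicity of $B_t$ along the entire path forces the local invariant manifolds to vary continuously with $t$, so that by choosing the initial Franks perturbation small enough, the two local stable (respectively unstable) manifolds one needs to match become arbitrarily $C^1$-close; once this closeness is available, the correction isotopy can be built by an elementary bump-function construction in a tubular chart, and its smallness is automatic.
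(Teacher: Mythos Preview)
The paper does not prove this lemma; it is quoted as a particular case of \cite[Theorem~1]{G:pre}, so there is no ``paper's own proof'' to compare against. Your sketch captures the basic two-step picture (classical Franks perturbation followed by a correction isotopy in an annular shell) and correctly identifies hypothesis~(\ref{ifg3}) as the crucial ingredient. However, the way you invoke~(\ref{ifg3}) does not close the argument.

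The gap is in the last paragraph. You claim that continuity of the local invariant manifolds along the path $t\mapsto g_t$, together with ``choosing the initial Franks perturbation small enough'', forces $W^s_{\loc}(Q,g_1)$ and $W^s_{\loc}(Q,f)$ to be arbitrarily $C^1$-close. Neither clause does what you want. First, the $C^1$-size of the Franks perturbation is \emph{not} a free parameter: it is at least $\|Df(f^n(Q))-A_{n,1}\|$, which is fixed by the target cocycle. You may shrink the support $V'$, making the perturbation $C^0$-small, but its $C^1$-size stays of order~$\varepsilon$. Second, continuity of $t\mapsto W^s_{\loc}(Q,g_t)$ says nothing about the distance between the endpoints $t=0$ and $t=1$; the stable direction of $B_1$ can make a fixed nonzero angle $\theta$ with that of $B_0$, and this angle is governed by the hyperbolicity constants of the $B_t$, not bounded a priori by~$\varepsilon$. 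Your correction isotopy $\Psi^s$ must therefore rotate tangent planes by $\theta$ somewhere in the shell $V\setminus V'$, which forces $\|\Psi^s-\mathrm{id}\|_{C^1}\gtrsim\theta$ regardless of how thin you make $V'$. There is no mechanism in your sketch guaranteeing $\theta<\varepsilon$, so the composite $g=\Psi^u\circ\Psi^s\circ g_1$ need not be an $\varepsilon$-perturbation of $f$.

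The actual proof in \cite{G:pre} uses the path $(A_{n,t})_{t\in[0,1]}$ in a more essential, incremental way: one moves along the path in many small steps, performing at each step a tiny Franks perturbation (rotating the invariant directions only slightly) together with a tiny correction, and one arranges these successive modifications on nested scales so that they do not accumulate in $C^1$ at any point. Hypothesis~(\ref{ifg3}) is what guarantees that the invariant splitting persists and varies continuously throughout this process, allowing each individual correction to be small; it is not used merely to compare the two endpoints.
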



\section{Lyapunov exponents of periodic orbits}
\label{s.lyapunovperiodic}

In this section we see that the lack of domination of a homoclinic
class yields perturbations such that there are periodic points of
the class whose Lyapunov exponents are multiple or close to zero,
see Propositions~\ref{p.gbobo} and \ref{p.weak}. We first state
some preparatory results and prove these propositions in
Section~\ref{ss.multiple}.

\subsection{Lyapunov exponents and homoclinic relations} \label{ss.homoclinic}
We will use repeatedly throughout  the paper the following result.

\begin{lemm}
\label{l.homocliniclyapunov} There is a residual subset $\cG$ of
$\diff$ such that for every $f\in \cG$, every saddle $P$ of $f$,
every non-trivial and locally maximal transitive hyperbolic set
$\La$ of $f$ containing $P$, and  every $\varepsilon>0$ there is a
saddle $Q\in \La$ such that
\begin{itemize}
\item
$|\chi_j(Q)-\chi_j(P)|<\varepsilon$ for all $j\in \{1,\dots,d\}$, and
\item
the orbit of $Q$ is $\varepsilon$-dense in $\La$. \end{itemize}
 In
particular, the saddle $Q$ can be chosen with arbitrarily large
period.
\end{lemm}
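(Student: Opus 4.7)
The plan is to construct $Q$ by applying the shadowing lemma to a carefully designed pseudo-orbit inside $\La$. Since $\La$ is a locally maximal transitive hyperbolic set, it enjoys the shadowing property and is topologically transitive; in particular, for every small $\delta > 0$ its $\delta$-pseudo-orbits are $\delta'$-shadowed by true orbits (with $\delta' \to 0$ as $\delta \to 0$), and any two non-empty open subsets of $\La$ can be joined by an orbit segment. Given the saddle $P$ and $\eps > 0$, I first fix an $\eps/3$-dense finite subset $\{z_1,\ldots,z_m\} \subset \La$ and, by transitivity, orbit segments in $\La$ of total combinatorial length $T = T(\eps)$ connecting $P$ to itself and passing within $\eps/3$ of every $z_i$.

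For each large integer $N$ I then form the pseudo-orbit $\gamma_N$ obtained by concatenating $N$ turns around the orbit of $P$ with this tour, so $\gamma_N$ has combinatorial length $N\pi(P) + T$ and is periodic. The shadowing lemma produces a periodic point $Q = Q_N \in \La$ whose orbit $\eps/3$-shadows $\gamma_N$ and whose period equals $\pi(Q_N) = N\pi(P) + T$. In particular $\pi(Q_N) \to \infty$ with $N$, and because $Q_N$ $\eps/3$-shadows a pseudo-orbit that passes within $\eps/3$ of each $z_i$, the orbit of $Q_N$ is $\eps$-dense in $\La$.

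The core estimate is on the Lyapunov exponents of $Q_N$. By uniform continuity of $Df$ on a neighbourhood of $\La$ and by the shadowing, the cocycle $Df^{\pi(Q_N)}|_{Q_N}$ is, in a suitable continuous frame adapted to the hyperbolic splitting $E^s\oplus E^u$, $o(1)$-close to the formal product $A^N \cdot B_T$, where $A = Df^{\pi(P)}|_P$ and $B_T$ is the fixed matrix coming from the tour. A standard singular-value computation shows that the sorted log-singular-values of $A^N \cdot B_T$ grow as $N\pi(P)\,\chi_j(P) + O(1)$ with $N$ for each $j\in\{1,\dots,d\}$; dividing by $\pi(Q_N) = N\pi(P) + T$ gives $\chi_j(Q_N) = \chi_j(P) + O(1/N)$. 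Hence, for $N$ large enough, $|\chi_j(Q_N) - \chi_j(P)| < \eps$ for every $j$, yielding the required saddle $Q := Q_N$ together with the statement on arbitrarily large period.

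The main obstacle lies in the singular-value estimate when $A$ has several eigenvalues of equal modulus, complex pairs, or non-trivial Jordan blocks: one must argue with sorted singular values (which are continuous under matrix perturbation) rather than with eigenvalues directly, and use that the $\chi_j(P)$ are by definition the log-moduli of the eigenvalues of $A$ divided by $\pi(P)$, which coincide with the asymptotic sorted log-singular-value growth rates of $A^N$. The residual set $\cG$ in the statement is introduced so that this lemma can be combined with other generic properties used throughout the paper (such as the equality between homoclinic and chain recurrence classes of \cite{BC:04}); the approximation construction itself goes through for any diffeomorphism admitting such a locally maximal transitive hyperbolic set.
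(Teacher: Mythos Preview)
Your strategy—shadow a periodic pseudo-orbit that winds $N$ times around $P$ and then takes an $\eps$-dense tour of $\La$—is precisely the mechanism behind the references \cite{ABCDW:07,ABC:} that the paper invokes (the paper itself gives no independent argument, only the citation). Your observation that the residual hypothesis is not actually needed for this particular statement is also correct; the genericity in the cited references is used for the harder problem of approximating the Lyapunov spectrum of an \emph{arbitrary} ergodic measure, not of a periodic one.

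There is, however, a real gap in the exponent estimate. You assert that $Df^{\pi(Q_N)}|_{Q_N}$ is ``$o(1)$-close'' to the formal product $A^N B_T$ and then analyse the singular values of the latter. But shadowing only makes each one-step factor $Df(f^k(Q_N))$ $\eta$-close (with $\eta=\eta(\delta')$) to the corresponding factor along the orbit of $P$; a product of $N\pi(P)$ factor-wise $\eta$-close matrices is \emph{not} close to $A^N$ in any matrix norm, since the errors compound exponentially, so your computation for $A^N B_T$ does not transfer for free to the actual return map. The ``main obstacle'' you flag (equal-modulus eigenvalues, Jordan blocks of $A$) is a difficulty in analysing $A^N$ itself, not in this transfer. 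One correct route is: group the one-step factors into blocks of length $\pi(P)$ so each block product is $\eta'$-close to $A$; at every strict gap in the modulus spectrum of $A$ a cone argument gives a dominated splitting for the perturbed block sequence, reducing to the case where all eigenvalues of $A$ have a common modulus $r$; there, choose $L$ so that $\|A^{\pm L}\|^{1/L}$ is close to $r^{\pm 1}$, take $\eta'$ small enough that $L$-fold block products remain close to $A^L$, and bound all exponents of the full product by $\log r \pm O(\eps)$ via crude norm estimates. Note this forces a two-parameter choice—first the shadowing scale $\delta'$ (hence $\eta$) small, then $N$ large—rather than the single limit $N\to\infty$ with fixed shadowing constants that your write-up suggests.
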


This results follows from the arguments in the proofs of
\cite[Corollary 2]{ABCDW:07} and \cite[Theorem 3.10]{ABC:} using
standard constructions that allow us to distribute these orbits
throughout the ``whole" transitive hyperbolic set while keeping
the control of the exponents.

\subsection{Dominated splittings and cocycles over periodic
orbits}\label{ss.dominatedperiodic}

We next  study the lack of domination of homoclinic classes. For
that we consider periodic orbits (of large period) in the class
and their associated cocycles. Next result is a
standard fact about dominated splittings (see for instance
\cite[Appendix B]{BDV:04}).

\begin{lemm}[Extension of a dominated splitting to a closure]
\label{l.dominatedclosure} Consider an $f$-invariant set $\La$
having a $k$-dominated splitting of index $i$. Then the
closure of $\La$ also has a $k$-dominated splitting of index $i$
 that coincides with the one over $\La$.
\end{lemm}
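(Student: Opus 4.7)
The plan is to extend the splitting $T_\Lambda M = E \oplus F$ to the closure $\overline{\Lambda}$ by a standard compactness-plus-continuity argument in the Grassmannian, and then check that the inequality~\eqref{e.dominated}, together with $Df$-invariance, passes to the limit. First I note that $\overline{\Lambda}$ is automatically $f$-invariant by continuity of $f$, so it is a compact $f$-invariant set to which it makes sense to extend the splitting.

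For the construction, fix $x \in \overline{\Lambda}$ and a sequence $(x_n) \subset \Lambda$ with $x_n \to x$. Working in a local trivialization of $TM$ near $x$, the subspaces $E_{x_n}$ and $F_{x_n}$ are points in the Grassmannians of $i$- and $(d-i)$-planes, respectively, which are compact. Extracting a subsequence, I obtain limits $\widetilde E_x$ and $\widetilde F_x$ of the appropriate dimensions. Choosing unit vectors $u_n \in E_{x_n}$, $w_n \in F_{x_n}$ that converge to given unit vectors $\widetilde u \in \widetilde E_x$ and $\widetilde w \in \widetilde F_x$, continuity of $Df^k$ and of the norm yields
\[
\frac{\|D_x f^k(\widetilde u)\|}{\|D_x f^k(\widetilde w)\|} \;=\; \lim_{n\to\infty}\frac{\|D_{x_n} f^k(u_n)\|}{\|D_{x_n} f^k(w_n)\|} \;\le\; \frac{1}{2}.
\]
This gives the $k$-domination inequality at $x$. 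Applied to any nonzero $v \in \widetilde E_x \cap \widetilde F_x$ it forces $\|D_xf^k v\| \le \tfrac12 \|D_xf^k v\|$, a contradiction, so $T_xM = \widetilde E_x \oplus \widetilde F_x$. For $Df$-invariance, the relation $Df_{x_n}(E_{x_n}) = E_{f(x_n)}$ passes to the limit by continuity of $Df$ and of $f$, giving $Df_x(\widetilde E_x) = \widetilde E_{f(x)}$, with the analogous statement for $\widetilde F$. Finally, when $x \in \Lambda$ we can take the constant sequence $x_n \equiv x$, so $\widetilde E_x = E_x$ and $\widetilde F_x = F_x$; thus the extension coincides with the original splitting on $\Lambda$.

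The main obstacle, and the only delicate point, is showing that $\widetilde E_x$ and $\widetilde F_x$ are independent of the choice of approximating sequence (and of subsequence), i.e.\ that the extension is well defined and continuous. The standard way to handle this is to invoke the uniqueness of a $k$-dominated splitting of given index on a compact invariant set: if $\widetilde E_x, \widetilde F_x$ and $\widetilde E'_x, \widetilde F'_x$ are two candidate limits (both produced by the procedure above, hence both yielding invariant bundles satisfying \eqref{e.dominated} along the full orbit of $x$), then decomposing any $v \in \widetilde E'_x$ as $v = u + w$ with $u \in \widetilde E_x$ and $w \in \widetilde F_x$, iterating $Df^{nk}$ and using domination shows that the $\widetilde F_x$-component cannot dominate unless $w = 0$, so $\widetilde E'_x \subset \widetilde E_x$; by dimension, equality. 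Hence the extension is unique, and continuity at $x$ follows because every subsequential limit coincides with $\widetilde E_x \oplus \widetilde F_x$.
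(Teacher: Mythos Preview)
Your argument is essentially correct and is the standard compactness/Grassmannian argument. Note that the paper does not prove this lemma at all: it is stated as ``a standard fact about dominated splittings (see for instance \cite[Appendix B]{BDV:04})'' and left without proof, so there is no paper proof to compare against beyond that reference. Your approach is exactly the classical one found there.

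One small comment on organization: in the invariance step you write $Df_x(\widetilde E_x)=\widetilde E_{f(x)}$ before you have established that $\widetilde E_{f(x)}$ is well defined. The clean order is the one you implicitly use in the last paragraph: from a \emph{single} approximating sequence $(x_n)$ and a diagonal extraction you get candidate subspaces along the entire orbit $\{f^m(x)\}_{m\in\ZZ}$, and these are automatically $Df$-invariant and $k$-dominated by passing to the limit; only then do you invoke uniqueness of a dominated splitting of a given index along an orbit to conclude that different sequences give the same answer. Your uniqueness sketch (``the $\widetilde F_x$-component cannot dominate unless $w=0$'') is the right idea but a bit terse: to close it, compare the growth of $v\in\widetilde E'_x$ with that of any unit vector in $\widetilde F'_x$, and combine with the fact that the $\widetilde F_x$-component of $v$ forces $\|Df^{nk}v\|$ to behave like a vector in $\widetilde F_x$; running the symmetric argument yields a contradiction. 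This is routine and your outline is adequate for a lemma the paper treats as folklore.
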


As in the case of periodic points of diffeomorphisms, given a
family of linear maps $A_1,\dots ,A_\ell\in GL(d,\RR)$ we consider the
product $B=A_\ell\circ\cdots \circ A_1$ and the eigenvalues
$\la_1(B),\dots,\la_d(B)$ of $B$ ordered in increasing modulus and
counted with multiplicity. We define the {\emph{$i$-th Lyapunov
exponent}} of $B$ by
$$
\chi_i(B)= \frac 1 \ell \log |\la_i(B)|.
$$

The family of linear maps above is \emph{bounded by $K$} if
$\|A_n\|$ and $\|A_n^{-1}\|$ are both less than or equal to $K$ for all 
$n=1, \dots, \ell$.

Note that Definition~\ref{d.dominated} of a dominated splitting
over an invariant set of a diffeomorphism can be restated for
sequences of linear maps.

\medskip

Next lemma  relates the lack of domination of a
cocycle and the generation of sinks or sources.

\begin{lemm}[{\cite[Corollary 2.19 and Remark 2.20]{BGV:06}}]
\label{l.bgv} For every $K>1$, $\pes>0$, and  $d\in \NN$,
there are constants $k_0$ and $\ell_0$ with the following property.
\begin{itemize}
\item
For every $f\in \diff$ with $\dim (M)=d$ such that the norms of $Df$ and $Df^{-1}$
are both bounded by $K$, and
\item
for every  periodic point $P$ of $f$ of period larger than $\ell_0$
such that there is no any $k_0$-dominated splitting over the orbit
of $P$,
\end{itemize}
 there is an $\pes$-perturbation $g$ of $f$ whose support  is contained in an arbitrarily small
neighborhood of the orbit of $P$ and  such that $P$ is either a
sink or a source of $g$.
\end{lemm}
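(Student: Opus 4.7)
The plan is to pass from the diffeomorphism $f$ to its derivative cocycle $(A_n = Df(f^n(P)))_{n=0}^{\ell-1}$ along the orbit of $P$, perturb this cocycle so that the return map becomes a linear contraction or expansion, and then realize the cocycle perturbation dynamically via the Generalized Franks' Lemma~\ref{l.gourmelon}. The real content is thus a purely linear statement about bounded periodic cocycles: if the period $\ell$ of $(A_n)$ is at least $\ell_0$ (with $\ell_0$ depending on $K,\pes,d$), and the cocycle admits no $k_0$-dominated splitting of any index, then there is an $\pes/2$-perturbation $(A'_n)$ whose product $B' = A'_{\ell-1}\circ\cdots\circ A'_0$ has all eigenvalues of equal modulus.

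To produce such $(A'_n)$ one collapses pairs of consecutive Lyapunov exponents by induction on the index. The failure of $k_0$-domination of index $i$ means, by the quantitative reading of Definition~\ref{d.dominated}, that along the orbit there are many iterates at which the $i$-th contracting direction is not preferred over the $(i+1)$-th one by a factor $2$ within $k_0$ steps. At such iterates one can insert an $\pes/2$-small rotation-type perturbation which rotates one Oseledets direction into the other, and by arranging such rotations at a positive-density set of ``non-dominated times'' one forces $\chi_i$ and $\chi_{i+1}$ to coalesce; this is the content of \cite[Corollary 2.19]{BGV:06}. Since by hypothesis no dominated splitting of any index $i\in\{1,\dots,d-1\}$ exists, these local perturbations can be combined (they can be supported at disjoint portions of the orbit), and a standard argument yields a cocycle all of whose $d$ Lyapunov exponents equal the mean $\chi = \tfrac{1}{d\ell}\log|\det B'|$, so every eigenvalue of $B'$ has modulus $e^{\chi\ell}$.

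A final $\pes/2$-uniform rescaling $A'_n\mapsto e^{\pm\delta}A'_n$ with $\delta>|\chi|$ small then pushes all these moduli strictly below $1$ (sink) or strictly above $1$ (source); this rescaling is absorbed into the total perturbation size, and along the way one may slightly perturb again so that every matrix $B_t$ in the interpolating family $(A_{n,t})_{t\in[0,1]}$ remains hyperbolic, as required by Lemma~\ref{l.gourmelon}. Applying that lemma to this isotopy (with empty sets $K^s,K^u$ and any small tubular neighborhood $V$ of the orbit of $P$) gives an $\pes$-perturbation $g$ of $f$ that coincides with $f$ off $V$ and realizes the cocycle $(A'_n)$ along the orbit of $P$; hence $P$ is a sink or a source of $g$, as required.

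The main obstacle is the cocycle-theoretic step that equalizes two consecutive Lyapunov exponents starting from the failure of $k_0$-domination. This is where the uniform bound $K$ on the cocycle, the quantitative interpretation of non-$k_0$-domination as a positive-density set of ``bad'' times, and the lower bound $\ell_0$ on the period all enter: $\ell_0$ must be chosen large enough (in terms of $K$, $k_0$ and $\pes$) so that sufficiently many such rotation perturbations fit inside a single period to actually collapse the exponents.
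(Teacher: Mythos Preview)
The paper does not prove this lemma; it is imported verbatim from \cite[Corollary 2.19 and Remark 2.20]{BGV:06}. Your linear-algebra sketch (collapse consecutive exponents using the lack of $k_0$-domination at each index, then push the common exponent off zero by a tiny homothety) is essentially the argument of \cite{BGV:06}, so in that respect there is nothing to add.

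There is, however, a genuine error in your realization step. You claim that ``one may slightly perturb again so that every matrix $B_t$ in the interpolating family $(A_{n,t})_{t\in[0,1]}$ remains hyperbolic, as required by Lemma~\ref{l.gourmelon}.'' This is impossible. The stable index of a hyperbolic linear map is locally constant, hence constant along any continuous path of hyperbolic maps. Your path starts at $B_0$, which has stable index $i$ with $0<i<d$ (since $P$ is a saddle), and ends at $B_1$, which has stable index $d$ (sink) or $0$ (source). No small perturbation of the path can avoid the non-hyperbolic locus; some $B_t$ must have an eigenvalue on the unit circle. Consequently Lemma~\ref{l.gourmelon} (the Generalized Franks' Lemma), whose hypothesis~(\ref{ifg3}) demands hyperbolicity of $B_t$ for all $t\in[0,1]$, does not apply here, even with $K^s=K^u=\emptyset$.

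The fix is simply to invoke the classical Franks' Lemma \cite{F:71} instead: it realizes any single nearby cocycle $(A'_n)$ by an $\pes$-perturbation supported in an arbitrarily small neighborhood of the orbit of $P$, with no path or hyperbolicity requirement. Since in this lemma one does not care about preserving any piece of invariant manifold, the extra strength of Lemma~\ref{l.gourmelon} is irrelevant, and its hyperbolicity hypothesis is an obstruction rather than a help. With this correction your outline is sound and matches what \cite{BGV:06} actually does.
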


Next result is a finer version of the previous lemma that allows us
to modify only two consecutive Lyapunov exponents of a cocycle.

\begin{lemm}[{\cite[Theorem~4.1 and Proposition 3.1]{BB:pre}}]\label{l.bobo}
For every $K>1$, $\varepsilon>0$ , and $d\geq 2$,
there are constants $k_0$ and $\ell_0$ with the following property.

Consider  $\ell\ge \ell_0$ and linear maps $A_1,\dots, A_{\ell}$ in $GL(d,\RR)$, such that:
\begin{itemize}
\item every $A_n$ is bounded by $K$,
\item for any $i\in \{1,\dots, d-1\}$,
the linear map $B=A_\ell\circ \cdots \circ A_1$ has no any $k_0$-dominated splitting of index $i$.
\end{itemize}
Then for every $j\in \{1,\dots,d-1\}$,
there exist one parameter families of linear maps $
(A_{n,t})_{t\in [0,1]}$ in $GL(d,\RR)$, $n=1,\dots,\ell$, such that
\begin{enumerate}
\item
$A_{n,0}=A_n$ for all $n=1,\dots,\ell$, and
\item
$A_{n,t}-A_n$ and $A_{n,t}^{-1}-A_n^{-1}$ are bounded by $\varepsilon$ for all $t\in [0,1]$ and all $n=1,\dots,\ell$.
\end{enumerate}
Consider the linear map
$$
B_t=A_{\ell,t}\circ \cdots \circ A_{1,t}.
$$
Then, for any $t\in [0,1]$, the Lyapunov exponents of the map $B_t$ satisfies
\begin{enumerate}
\item[(3)]
 $\chi_m(B_t)=\chi_m(B)$ if $m\ne j,j+1$,
\item[(4)]
$\chi_{j}(B_t)+\chi_{j+1}(B_t)= \chi_{j}(B)+\chi_{j+1}(B)$,
\item[(5)]
$\chi_{j}(B_{t^\prime})$ is non-decreasing and
$\chi_{j+1}(B_{t^\prime})$ is non-increasing, that is
$$
\chi_{j}(B_{t^\prime}) \le \chi_{j}(B_t) \le \chi_{j+1}(B_t) \le
\chi_{j+1}(B_{t^\prime}), \quad \mbox{for all $t'<t$},
$$
\item[(6)] 
$\chi_{j+1} (B_1)=\chi_j(B_1)$, and
\item[(7)] the eigenvalues of $B_1$ are all real.
\end{enumerate}
\end{lemm}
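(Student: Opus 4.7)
The plan is to follow the Bochi--Bonatti--Viana philosophy: absence of a dominated splitting at the gap $(j,j+1)$ forces many sites along the orbit at which a controlled small rotation can mix the $j$-th and $(j+1)$-th Oseledets directions of $B$, and the accumulation of such mixings collapses the gap between $\chi_j(B)$ and $\chi_{j+1}(B)$. Since the hypothesis is stronger (no $k_0$-dominated splitting of any index), we have in particular no $k_0$-dominated splitting of index $j$, which is what we shall exploit.

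First I would fix a basis adapted to the Oseledets flag $E^{\leq 1}\subset E^{\leq 2}\subset \dots \subset E^{\leq d}$ of $B$ at the base point and transport it along the orbit by the cocycle. The perturbation $A_{n,t}$ at each site will be required to act only within the $2$-plane spanned by the $j$-th and $(j+1)$-th Oseledets directions at that site and to leave the complementary $d-2$ directions invariant. This automatically gives $\chi_m(B_t)=\chi_m(B)$ for $m\neq j,j+1$, hence condition~(3). Choosing each perturbation inside $SL$ of this $2$-plane (e.g.\ a rotation composed with $A_n$) forces preservation of the determinant of the induced $2$-dimensional sub-cocycle, which yields the sum identity $\chi_j(B_t)+\chi_{j+1}(B_t)=\chi_j(B)+\chi_{j+1}(B)$ in~(4). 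Thus the whole problem reduces to a $2$-dimensional one: collapsing the gap of a single $2$-dimensional cocycle.

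Second I would use the lack of $k_0$-dominated splitting of index $j$ quantitatively. For $k_0$ large enough (depending on $K,\pes,d$) there exist sites $n_1<n_2<\cdots$ on the orbit at which the angle between the propagated subspaces playing the role of ``contracted'' and ``expanded'' for the index-$j$ gap is arbitrarily small (smaller than any prescribed function of $\pes$). At each such $n_i$ an $\pes$-small rotation $R_{\theta_i}$ inside the $2$-plane suffices to effectively interchange a vector of the contracted direction with one of the expanded direction. Define $A_{n_i,t}=R_{t\theta_i}\circ A_{n_i}$ at the selected sites and $A_{n,t}=A_n$ elsewhere; this yields a continuous path satisfying (1) and (2). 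Choosing all $\theta_i$ of the same sign and tuning them so that at $t=1$ the Oseledets directions of the perturbed $B_1$ inside the $2$-plane fully merge gives condition~(6). Monotonicity (5) then comes from the fact that, in this $2$-dimensional reduction, the top Lyapunov exponent is a convex function of the compound rotation parameter and every additional rotation is performed in the direction that reduces the gap. Condition~(7) is arranged by an arbitrarily small final adjustment at a single site that makes the restriction of $B_1$ to the $2$-plane have real (equal) eigenvalues, while the complementary block is already diagonal with real entries in the adapted basis.

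The main obstacle is the quantitative bookkeeping. One must prove that the sites produced by non-domination are sufficiently many and carry angles sufficiently small so that a collection of $\pes$-small rotations can actually collapse the full spectral gap at $t=1$, uniformly in $\ell\geq \ell_0$. This is where the constants $k_0=k_0(\pes,K,d)$ and $\ell_0=\ell_0(\pes,K,d)$ are fixed: $k_0$ is chosen large to ensure that each mixing angle is so small that only a tiny rotation is needed, and $\ell_0$ is chosen large so that enough mixing sites are present to accumulate the entire collapse. The monotonicity along $t$ is delicate, since a local rotation modifies $B_t$ globally; here one invokes the key estimate of Bochi--Bonatti relating small-angle mixings of an invariant splitting to the reduction of the gap in the Lyapunov spectrum, and verifies that this reduction is continuous and monotone in $t$ on the chosen path.
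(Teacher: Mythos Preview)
The paper does not prove this lemma: it is imported wholesale from Bochi--Bonatti (the citation in the statement is to Theorem~4.1 and Proposition~3.1 of \cite{BB:pre}), so there is no in-paper argument to compare against. Your sketch is in the right spirit for the mixing part of the Bochi--Bonatti machinery, but it has a genuine gap and a couple of soft spots.

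The genuine gap is your treatment of item~(7). You claim that after your perturbation ``the complementary block is already diagonal with real entries in the adapted basis.'' This is false in general: nothing in the hypotheses forces the eigenvalues $\la_m(B)$ for $m\neq j,j+1$ to be real, and since your path acts only inside the $(j,j+1)$ $2$-plane it cannot alter those eigenvalues at all. Obtaining real spectrum throughout is a separate step (this is precisely why the lemma cites \emph{two} results from \cite{BB:pre}; compare also Proposition~\ref{p.bobo} below, which is the real-spectrum statement). A correct argument first passes, via a small path of cocycles that preserves every $\chi_m$, to a cocycle whose composition has all real eigenvalues, and only then performs the $2$-plane mixing. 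Your outline conflates these two steps.

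Two softer issues. First, the reduction to a $2$-dimensional sub-cocycle is more delicate than you indicate: transporting the Oseledets flag of $B$ along the orbit gives invariant flags for the full cocycle, but an arbitrary $\pes$-rotation at site $n$ ``inside the $2$-plane'' need not commute with the flag structure well enough to leave the complementary $\chi_m$ exactly unchanged; one must work with the induced cocycle on the quotient $E^{\leq j+1}/E^{\leq j-1}$ and lift the perturbation back, checking that the lift is still $\pes$-small. Second, your monotonicity argument for~(5) (``the top Lyapunov exponent is a convex function of the compound rotation parameter'') is an assertion, not a proof; in \cite{BB:pre} the path is constructed so that $\chi_{j+1}(B_t)$ is explicitly non-increasing, and this requires choosing the interpolation carefully rather than simply scaling all rotation angles by $t$.
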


\begin{rema} \label{r.bobo}
Note that if $A\in GL(d,\RR)$ has real eigenvalues and if its
Lyapunov exponents $\chi_{j}(A)$ and $\chi_{j+1}(A)$ are equal
then there is $\bar A\in GL(d,\RR)$ arbitrarily close to $A$ whose
eigenvalues are real and whose Lyapunov exponents satisfy
$\chi_{m}(\bar A)\ne \chi_{j}(\bar A)=\chi_{j+1}(\bar A)$ for all
$m\ne j,j+1$. Moreover, there is a ``small path of cocycles"
joining $A$ and $\bar A$ that preserves the $j$ and $j+1$ Lyapunov
exponents.
Thus in the conclusions of Lemma~\ref{l.bobo} we can replace
item (3) by
\begin{enumerate}
\item[(3')]
$\chi_m(B_t)$ is close to $\chi_m(B)$ for all $m\ne j,j+1$ and all
$t\in [0,1]$ and $\chi_{m}(B_1)\ne \chi_{j}(B_1)=\chi_{j+1}(B_1)$.
\end{enumerate}
\end{rema}

In order to get cocycles with real eigenvalues we also use the following result
(see also previous results in \cite[Lemme 6.6]{BC:04}
and \cite[Lemma 3.8]{BGV:06}).

\begin{prop}[{\cite[Proposition 4.1]{BB:pre}}]\label{p.bobo}
For every $K>1$, $\varepsilon>0$ , and $d\geq 2$,
there is a constant $\ell_0$ with the following property.

Consider  $\ell\ge \ell_0$ and linear maps $A_1,\dots, A_{\ell}$ in $GL(d,\RR)$, such that:

For every family of linear maps $(A_n)_{n=1}^\ell$ in $GL(d,\RR)$ such that $\ell\ge
\ell_0$ and $A_n$ and $A_n^{-1}$ are bounded by $K$ for every $n$,
there are one parameter families of linear maps $(A_{n,t})_{n=1, t\in [0,1]}^\ell$,
in $GL(d,\RR)$, such that
\begin{itemize}
\item
 $A_{n,0}=A_n$,
\item
$A_{n,t}-A_n$
and $A_{n,t}^{-1}-A_n^{-1}$ are bounded by $\varepsilon$ for every $n$,
\item let $B_t=A_{\ell,t}\circ \cdots \circ A_{1,t}$, then for every $j\in
\{1,\dots,d\}$ the  Lyapunov exponent  $\chi_{j}(B_t)$ is constant for $t\in
[0,1]$, and
\item
all the multipliers of $B_1$ are  real.
\end{itemize}
\end{prop}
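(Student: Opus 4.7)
The plan is to exploit that complex conjugate eigenvalues of $B=A_\ell\circ\cdots\circ A_1$ necessarily share the same modulus, hence the same Lyapunov exponent. One can then continuously deform each complex pair into a pair of real eigenvalues with the same common modulus without disturbing any Lyapunov exponent. The difficulty is to realize this deformation of the product $B$ as $\varepsilon$-perturbations of the individual factors $A_n$; this is possible precisely because $\ell$ is large.

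First I would construct the target family at the level of $B$. Writing $B$ in real Jordan form as $B=PJP^{-1}$, each complex conjugate pair of eigenvalues appears as a $2\times 2$ rotation block $rR_\theta$ (with nilpotent contributions when the pair has multiplicity $>1$). Define $J_t$ by simultaneously replacing every such block by $rR_{(1-t)\theta}$, leaving real Jordan blocks unchanged, and set $B_t:=PJ_tP^{-1}$. Then $B_0=B$; the multiset of moduli of the eigenvalues is preserved along $t$, so every Lyapunov exponent $\chi_j(B_t)$ is constant in $t$; and at $t=1$ every $2\times 2$ rotation block has become $rR_0=r\,\mathrm{Id}_2$, so the eigenvalues of $B_1$ are all real.

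The second step is to realize $B_t$ as $A_{\ell,t}\circ\cdots\circ A_{1,t}$ through the telescoping ansatz $A_{n,t}=V_n(t)\circ A_n\circ V_{n-1}(t)^{-1}$, with boundary conditions $V_0(t)=\mathrm{Id}$ and $V_\ell(t)=B_tB^{-1}$, so that the product collapses to $V_\ell(t)\circ B=B_t$. Setting $U_n:=A_n\circ\cdots\circ A_1$ and $\tilde V_n(t):=U_n^{-1}V_n(t)U_n$, a direct computation gives $A_{n,t}-A_n=U_n(\tilde V_n\tilde V_{n-1}^{-1}-\mathrm{Id})U_{n-1}^{-1}$, and the boundary conditions become $\tilde V_0=\mathrm{Id}$ and $\tilde V_\ell=B^{-1}B_t$. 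The remaining freedom is in the choice of interpolation $n\mapsto\tilde V_n$ between these two endpoints.

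The main obstacle is controlling $\|A_{n,t}-A_n\|$ uniformly in $n$: a naive interpolation is amplified by $\|U_n\|\cdot\|U_{n-1}^{-1}\|$, which can be of order $K^{2\ell}$. The resolution, parallel to the argument underlying Lemma~\ref{l.bobo} in \cite{BB:pre}, is to choose $\tilde V_n$ in a moving frame adapted to the invariant block structure of the cocycle $(A_n)$. In such a frame the deformation $B^{-1}B_t$ acts as a bounded ``block rotation'' on the appropriate invariant subspaces: the operators $U_n$ have controlled norm on its effective directions, and by distributing the interpolation equally over the $\ell$ steps one obtains $\|\tilde V_n\tilde V_{n-1}^{-1}-\mathrm{Id}\|=O(1/\ell)$. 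This yields $\|A_{n,t}-A_n\|,\|A_{n,t}^{-1}-A_n^{-1}\|\le\varepsilon$ once $\ell\ge\ell_0(K,\varepsilon,d)$. Since $\chi_j(B_t)$ is constant in $t$ by the spectral preservation of $B_t$ and the eigenvalues of $B_1$ are all real by construction, the proposition follows.
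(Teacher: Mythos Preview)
The paper does not prove this proposition; it is imported verbatim from \cite[Proposition~4.1]{BB:pre} and used as a black box. So there is no ``paper's own proof'' to compare against, and your proposal must stand on its own.

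Your outline has the right architecture (deform the rotation angles of each complex block of $B$ to zero, then realize the resulting path $t\mapsto B_t$ via a telescoping conjugation of the factors), but the last paragraph is where the actual content lies, and there it asserts precisely what must be proved. The obstacle you correctly identify---that $A_{n,t}-A_n = U_n(\tilde V_n\tilde V_{n-1}^{-1}-\mathrm{Id})U_{n-1}^{-1}$ is amplified by the conjugation---is not resolved by invoking a ``moving frame adapted to the invariant block structure.'' Concretely: the change-of-basis matrix $P$ in $B=PJP^{-1}$ has condition number that depends on the angles between the generalized eigenspaces of $B$, and these angles are \emph{not} controlled by $K,d,\varepsilon$ alone (they can be arbitrarily bad even for fixed $K$ and $\ell$). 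Likewise the images $U_nE$ of an invariant plane $E$ of $B$ may become nearly degenerate along the orbit, so that ``small rotation in the block'' does not translate into a small perturbation of $A_n$ in operator norm. Your sentence ``the operators $U_n$ have controlled norm on its effective directions'' is exactly the hard estimate, and nothing in the proposal supplies it.

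The argument in \cite{BB:pre} does not fix the path $B_t$ first and then try to lift it. Instead it works directly on the cocycle: one first arranges (by a preliminary perturbation, itself requiring large $\ell$) that along each invariant $2$-plane the induced $2$-dimensional cocycle is approximately conformal at every step, and only then composes with tiny rotations in those planes. The conformality is what makes conjugation by $U_n$ harmless on the relevant directions, and obtaining it is where the hypothesis $\ell\ge\ell_0$ is genuinely consumed. Without that preparatory step (or an equivalent mechanism), your $O(1/\ell)$ claim does not follow.
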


\subsection{Multiple Lyapunov exponents and weak hyperbolicity}
\label{ss.multiple}
In Pro\-po\-sitions~\ref{p.gbobo} and \ref{p.weak} 
we combine Lemmas~\ref{l.gourmelon} and \ref{l.bobo} to prove that the
lack of domination of a homoclinic class yields periodic orbits
whose Lyapunov exponents are multiple or close to zero.

\begin{prop} \label{p.gbobo}
For every $K>1$, $\pes>0$, and  $d\in \NN$,
there is a constant $k_0$ with the following property.

Consider a diffeomorphism $f\in \diff$, $\dim (M)=d$, such that the norms of
$Df$ and $Df^{-1}$ are bounded by $K$,
a hyperbolic periodic point $P$ of $s$-index $i$ 
whose homoclinic class $H(P,f)$ is non-trivial,
and an integer $j\in \{1,\dots,d\}$ with
$j\ne i$ such that the homoclinic class  $H(P,f)$ has
no any $k_0$-dominated splitting of index $j$.

Then there is a periodic point
$Q\in H(P,f)$ homoclinically related with $P$ and an $\varepsilon$-perturbation
$g$ of $f$ along the orbit of $Q$ that is adapted to
$H(P,f)$ and to the following property $\fP_{j,j+1}$:
$$
\fP_{j,j+1}\eqdef \begin{cases}
&\text{$\chi_j(Q_g)=\chi_{j+1}(Q_g)$,}\\
& \text{$\chi_m(Q_g)\neq \chi_j(Q_g)$ for all  $m\neq j,j+1$,}\\
& \text{$\la_m(Q_g)\in \RR$ for all $m$.}
\end{cases}
$$
\end{prop}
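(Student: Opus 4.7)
The plan is to find a periodic saddle $Q \in H(P,f)$ homoclinically related to $P$ whose derivative cocycle along its orbit lacks a $k_0$-dominated splitting of index $j$, and then to realize the cocycle perturbation supplied by Lemma~\ref{l.bobo} as a $C^1$-perturbation of $f$ via the Generalized Franks' Lemma (Lemma~\ref{l.gourmelon}). I would take $k_0$, $\ell_0$ to be the constants provided by Lemma~\ref{l.bobo} for the data $K$, $\varepsilon/2$, $d$; this $k_0$ is the one claimed by the proposition.

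First I would find such a $Q$ by contradiction. Periodic saddles homoclinically related to $P$ are dense in $H(P,f)$, and via iteration of the Poincar\'e return map attached to a transverse homoclinic intersection they exist with arbitrarily large period, in particular with period $\geq \ell_0$. If every one of them carried a $k_0$-dominated splitting of index $j$ on its orbit, these pointwise splittings would combine into a $k_0$-dominated splitting of index $j$ on the union of the orbits, which is an $f$-invariant subset dense in $H(P,f)$. Lemma~\ref{l.dominatedclosure} would then extend this splitting to $H(P,f)$, contradicting the hypothesis.

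Next I would apply Lemma~\ref{l.bobo} together with Remark~\ref{r.bobo} to the cocycle $A_n = Df(f^n(Q))$, $n=0,\dots,\pi(Q)-1$, at the chosen index $j$. This yields a one-parameter family $A_{n,t}$ of linear maps $\varepsilon/2$-close to $A_n$, whose product $B_t$ keeps $\chi_j + \chi_{j+1}$ constant, keeps every other exponent close to its initial value (and, at $t=1$, distinct from the common central value), and at $t=1$ has entirely real spectrum with $\chi_j(B_1) = \chi_{j+1}(B_1)$. The crucial step is to verify the hyperbolicity of $B_t$ along the whole path, required by condition~(\ref{ifg3}) of Lemma~\ref{l.gourmelon}. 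Since $Q$ is homoclinically related to $P$ it has stable index $i$, so $\chi_1(B_0),\dots,\chi_i(B_0) < 0 < \chi_{i+1}(B_0),\dots,\chi_d(B_0)$. The assumption $j\neq i$ forces either $j+1\leq i$ or $j\geq i+1$, so $\chi_j(B_0)$ and $\chi_{j+1}(B_0)$ share a strict sign; the monotonicity property of item~(5) of Lemma~\ref{l.bobo} keeps $\chi_j(B_t),\chi_{j+1}(B_t)$ on that side of zero, and the near-constancy of the other exponents then gives hyperbolicity of $B_t$ with stable index $i$ throughout the homotopy.

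Finally, to verify the adapted perturbation property, given a neighborhood $V$ of the orbit of $Q$, $\rho>0$, and compact sets $K^s\subset W^s_\rho(Q,f)$, $K^u\subset W^u_\rho(Q,f)$ disjoint from $V$, I would pick transverse heteroclinic points $X\in W^s(Q,f)\cap W^u(P,f)$ and $Y\in W^u(Q,f)\cap W^s(P,f)$, shrink $V$ so that it is also disjoint from the orbit of $P$, and enlarge $K^s, K^u$ to contain suitable iterates of $X$ and $Y$ lying in $W^{s,u}_\rho(Q,f)\setminus V$. Applying Lemma~\ref{l.gourmelon} to the family $(A_{n,t})$ with these data yields $g\in\diff$, $\varepsilon$-close to $f$, that coincides with $f$ on the orbit of $Q$ and outside $V$, satisfies $K^{s,u}\subset W^{s,u}_\rho(Q,g)$, and has $Dg(g^n(Q))=A_{n,1}$. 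The first two properties ensure that the chosen transverse heteroclinic intersections persist (since near $P$ nothing has changed), so $Q$ and $P$ stay homoclinically related for $g$; the last property says exactly that $Q_g$ realizes $\mathfrak{P}_{j,j+1}$. The main obstacle throughout is the hyperbolicity check along the path $B_t$: if $j$ equalled the stable index $i$, averaging two central exponents of opposite signs could produce vanishing exponents and spoil the application of Franks' lemma, which is precisely why the assumption $j\neq i$ is needed.
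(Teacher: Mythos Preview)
Your proof is correct and follows essentially the same route as the paper: locate a saddle $Q$ of large period homoclinically related to $P$ whose orbit lacks $k_0$-domination of index $j$ (via density plus Lemma~\ref{l.dominatedclosure}), apply Lemma~\ref{l.bobo} and Remark~\ref{r.bobo} to its derivative cocycle, and realize the endpoint by the Generalized Franks' Lemma after enlarging $K^s,K^u$ with heteroclinic disks to preserve the homoclinic relation with $P$. Your explicit verification that the path $B_t$ stays hyperbolic (using $j\neq i$ to keep $\chi_j,\chi_{j+1}$ on one side of zero via the monotonicity in item~(5)) is a point the paper leaves implicit, so in that respect your write-up is slightly more complete; the $\varepsilon/2$ in your choice of constants is harmlessly cautious, since Lemma~\ref{l.gourmelon} already returns an $\varepsilon$-perturbation from an $\varepsilon$-small cocycle path.
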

\begin{proof}
Consider 
the constants $d\in \NN$, $K>1$,  and $\varepsilon>0$. 
Applying Lemma~\ref{l.bobo} to these constants
we obtain the constants
$k_0$ and $\ell_0$.

Since the homoclinic class $H(P,f)$ is non-trivial, the set
$\Si_{\ell_0}$ of all saddles $Q$ of period larger than $\ell_0$
that are homoclinically related to $P$ is dense in $H(P,f)$.
Observe that there is a saddle $Q\in \Si_{\ell_0}$ such that there
is no $k_0$-dominated splitting of index $j$ over the orbit of
$Q$. Otherwise, by Lemma~\ref{l.dominatedclosure}, the closure of
the set $\Si_{\ell_0}$ (that is the whole class $H(P,f)$) would
have a $k_0$-dominated splitting of index $j$, which is a
contradiction.

Thus we can  apply Lemma~\ref{l.bobo} to the linear maps
$Df(Q),\dots, Df^{\ell-1}(Q)$, $\ell=\pi(Q)\ge \ell_0$, obtaining
one-parameter families of linear maps $(A_{i,t})_{t\in [0,1]}$,
$i=0,\dots,\ell-1$, satisfying the conclusions of
Lemma~\ref{l.bobo}.

We now fix a neighborhood $V$ of the orbit $Q$ and compact sets
$K^s \subset W^s(Q,f)$ and
$K^u\subset W^u(Q,f)$ disjoint from $V$
as in Definition~\ref{d.adapted}. Since  $Q$ is
homoclinically related to $P$ there are transverse intersections
$Y^s\in W^s(Q,f)\pitchfork W^u(P,f)$ and $Y^u\in
W^u(Q,f)\pitchfork W^s(P,f)$ and (small) compact disks
$\De^s\subset W^s(Q,f)$ and $\De^u\subset W^u(Q,f)$ (of the same
dimension as $W^s(Q,f)$ and $W^u(Q,f)$) containing  the points
$Y^s$ and $Y^u$. We consider the compact sets
 $$
 \tilde K^s=K^s\cup \De^s
\quad \mbox{and} \quad
 \tilde K^u=K^u\cup \De^u.
$$
We now apply Lemma~\ref{l.gourmelon} to $\varepsilon$,
$f$, the small path of
cocycles $(A_{n,t})$ above, and   the compact sets $\tilde
K^s$ and $\tilde K^u$ to get an $\varepsilon$-perturbation $g$ of
$f$ along the orbit of $Q$ adapted to $H(P,f)$ and Property $\fP_{j,j+1}$:
\begin{itemize}
\item
adapted to $H(P,f)$: By the choice of $\De^s$ and $\De^u$ the saddle $Q_g$ is
homoclinically related to $P_g$.
\item
adapted to
 Property $\fP_{j,j+1}$: By item (6) in
 Lemma~\ref{l.bobo} it holds
 $\chi_j(B_1)=\chi_{j+1}(B_1)$,
by Remark~\ref{r.bobo} we have $\chi_m(B_1)\ne \chi_j(B_i)$ if $m\ne
j,j+1$,
  and by item (7) all the eigenvalues of $B_1$ all are real.
\end{itemize}
  This concludes the proof of the
 proposition.
\end{proof}

\begin{prop}\label{p.weak}
For every $K>1$, $\pes>0$, and  $d\in \NN$,
there is a constant $k_0$ with the following property.

Consider $\delta>0$, a diffeomorphism $f\in \diff$, $\dim (M)=d$,  and a
hyperbolic periodic point $P$ of $f$ of $s$-index $i$ such that:
\begin{itemize}
\item
the norms of
$Df$ and $Df^{-1}$ are bounded by $K$,
\item
$\chi_i(P)+\chi_{i+1}(P)>-\delta$,
\item
the homoclinic class  $H(P,f)$ is non-trivial and has no $k_0$-dominated
splitting of index $i$.
\end{itemize}
Then there is a periodic point
$Q\in H(P,f)$ homoclinically related with $P$ and an $\varepsilon$-perturbation
$g$ of $f$ along the orbit of $Q$ that is adapted to
$H(P,f)$ and to the following property 
\begin{equation}
\label{e.pid}
 \fP_{i,\de}\eqdef
\text{The $i$-th Lyapunov exponent of $Q$ satisfies $\chi_i(Q)\in
(-\de,0)$}.
\end{equation}
\end{prop}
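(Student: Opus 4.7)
The plan is to follow the pattern of Proposition~\ref{p.gbobo}: find a suitable periodic saddle $Q$ homoclinically related to $P$, apply Lemma~\ref{l.bobo} to move the Lyapunov exponents of its cocycle, and then realize the perturbation via the generalized Franks' lemma (Lemma~\ref{l.gourmelon}). The new difficulty is that we must guide the $i$-th Lyapunov exponent into the open interval $(-\delta,0)$ rather than simply merging $\chi_i$ and $\chi_{i+1}$; this forces us to stop Lemma~\ref{l.bobo}'s path $(B_t)_{t\in[0,1]}$ at some intermediate time $t^\star$ and to ensure that the whole sub-path $(B_t)_{t\in[0,t^\star]}$ remains hyperbolic, which is the main obstacle of the proof.

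Let $k_0,\ell_0$ be the constants furnished by Lemma~\ref{l.bobo} for $(K,\varepsilon/2,d)$. I want to produce $Q$ satisfying simultaneously (i) $\pi(Q)\geq\ell_0$, (ii) no $k_0$-dominated splitting of index $i$ on its orbit, and (iii) $\chi_i(Q)+\chi_{i+1}(Q)>-\delta$. A preliminary $(\varepsilon/2)$-perturbation makes $f$ belong to the residual set of Lemma~\ref{l.homocliniclyapunov}, and preserves all hypotheses: the exponent sum is continuous in $f$, the homoclinic class remains non-trivial, and the lack of $k_0$-dominated splitting of index $i$ persists because it is an open condition on the cocycle along the continuation of a hyperbolic subset of $H(P,f)$ which itself lacks the splitting (such a subset exists by Lemma~\ref{l.dominatedclosure} applied to the hypothesis on $H(P,f)$). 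Fixing a transitive locally maximal hyperbolic set $\Lambda\subseteq H(P,f)$ containing $P$ and without $k_0$-dominated splitting of index $i$, Lemma~\ref{l.homocliniclyapunov} supplies saddles $Q\in\Lambda$ homoclinically related to $P$, of period $\geq\ell_0$, with Lyapunov exponents arbitrarily close to those of $P$ (so (iii) holds) and with orbits arbitrarily dense in $\Lambda$. If every such $Q$ carried a $k_0$-dominated splitting of index $i$ on its orbit, Lemma~\ref{l.dominatedclosure} would give one on $\Lambda$, a contradiction; hence some $Q$ satisfies (i)--(iii) simultaneously.

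Applying Lemma~\ref{l.bobo} with $j=i$ to the cocycle along the orbit of $Q$ yields the family $(B_t)_{t\in[0,1]}$ of $(\varepsilon/2)$-perturbations with $\chi_i(B_t)+\chi_{i+1}(B_t)\equiv\sigma:=\chi_i(Q)+\chi_{i+1}(Q)>-\delta$, $\chi_i(B_t)$ non-decreasing from $\chi_i(Q)$ up to $\sigma/2$, and $\chi_{i+1}(B_t)$ non-increasing from $\chi_{i+1}(Q)$ down to $\sigma/2$. I would select a truncation time $t^\star\in[0,1]$ so that $\chi_i(B_{t^\star})\in(-\delta,0)$ and the sub-path is hyperbolic, via case analysis. (a) If $\chi_i(Q)\in(-\delta,0)$, set $t^\star=0$. (b) If $\sigma\geq 0$ and $\chi_i(Q)\leq-\delta$, then $\chi_i(B_t)$ rises continuously from $\leq-\delta$ up to $\sigma/2\geq 0$, so pick $t^\star$ with $\chi_i(B_{t^\star})\in(-\delta,0)$; then $\chi_{i+1}(B_t)=\sigma-\chi_i(B_t)>0$ on $[0,t^\star]$. (c) If $\sigma\in(-\delta,0)$ and $\chi_i(Q)\leq-\delta$, then $\chi_i(B_t)$ reaches only $\sigma/2\in(\sigma,0)$, so pick $t^\star$ with $\chi_i(B_{t^\star})\in(-\delta,\sigma)$; this occurs strictly before the instant $t_0$ at which $\chi_{i+1}(B_{t_0})=0$, so that $\chi_{i+1}(B_t)>0$ on $[0,t^\star]$. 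In each case the truncated path consists of hyperbolic matrices only, which resolves the main obstacle.

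Finally, as in the proof of Proposition~\ref{p.gbobo}, given any neighborhood $V$ of the orbit of $Q$ and compact sets $K^s,K^u\subset W^{s,u}_\vro(Q,f)$ disjoint from $V$ as in Definition~\ref{d.adapted}, I enlarge them to $\tilde K^s=K^s\cup\Delta^s$ and $\tilde K^u=K^u\cup\Delta^u$ by adjoining small disks $\Delta^s\subset W^s(Q,f)$, $\Delta^u\subset W^u(Q,f)$ containing prescribed transverse intersections with $W^u(P,f)$ and $W^s(P,f)$, and apply Lemma~\ref{l.gourmelon} to the truncated path $(B_t)_{t\in[0,t^\star]}$ together with the enlarged compact sets. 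The result is an $(\varepsilon/2)$-perturbation $g$ that coincides with $f$ outside $V$ and along the orbit of $Q$, preserves the homoclinic relation between $P_g$ and $Q_g$, and realizes $Dg^{\pi(Q)}(Q)=B_{t^\star}$; composed with the preliminary perturbation this gives the desired $\varepsilon$-perturbation of $f$, and by construction $\chi_i(Q_g)\in(-\delta,0)$, which proves the adaptedness to $H(P,f)$ and Property~$\fP_{i,\delta}$.
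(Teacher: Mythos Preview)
Your overall strategy---find a suitable $Q$, apply Lemma~\ref{l.bobo}, truncate the path at a well-chosen time $t^\star$, and realize via Lemma~\ref{l.gourmelon}---matches the paper's, and your case analysis for choosing $t^\star$ is correct and arguably more transparent than the paper's single formula $\chi_i(\tilde B_{t_0})=\min\bigl(\tfrac{\tau-\delta}{2},\tfrac{-\delta}{2}\bigr)$.

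There is, however, a genuine gap in how you handle the preliminary step. You replace $f$ by a \emph{global} $(\varepsilon/2)$-perturbation $f''$ lying in the residual set of Lemma~\ref{l.homocliniclyapunov}, find $Q$ for $f''$, and then perform the local Franks-type perturbation along the $f''$-orbit of $Q$. The resulting $g$ coincides with $f''$ outside $V$, not with the original $f$; hence $g$ is not an adapted perturbation of $f$ in the sense of Definition~\ref{d.adapted}, which explicitly requires $g=f$ outside $V$. Moreover, $Q$ as you construct it is a periodic point of $f''$ homoclinically related to $P_{f''}$, not a point of $H(P,f)$ as the statement demands. Saying that the composition is an $\varepsilon$-perturbation of $f$ addresses only the size of the perturbation, not its support; and it is precisely the local-support condition that is exploited later (e.g.\ in Corollary~\ref{c.1} and Proposition~\ref{p.cycle}) to combine several adapted perturbations with disjoint supports.

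The paper circumvents this by using the nearby generic $f'$ only to \emph{locate} a periodic point $Q_{f'}\in L_{f'}$ with the desired exponent sum and lack of $k_0$-domination, and then pulling back to its hyperbolic continuation $Q=Q_f\in L\subset H(P,f)$. All subsequent work is done over the $f$-orbit of $Q$: one first builds a short path of hyperbolic cocycles interpolating from $Df$ to (the transported) $Df'$ along that orbit, and then concatenates with the truncated Lemma~\ref{l.bobo} path. The final application of Lemma~\ref{l.gourmelon} is thus to $f$ itself, giving a perturbation supported near the $f$-orbit of $Q$, exactly as Definition~\ref{d.adapted} requires. Your argument becomes correct once you replace the global diffeomorphism perturbation by this continuation-and-cocycle-interpolation device.
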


\begin{proof} The strategy of the proof is analogous to the one of
Proposition~\ref{p.gbobo}, so we will skip some repetitions.
As in the proof of Proposition~\ref{p.gbobo} we consider
 constants $k_0$ and $\ell_0$ associated to $K$, $\ve$, and $d$.

Since the homoclinic class $H(P,f)$ has no dominated splitting of
index $i$, there is a locally maximal transitive hyperbolic subset
$L$ of $H(P,f)$ containing $P$ and having no $k_0$-dominated
splitting of index $i$. We can also assume  that for every $f'$
close to $f$ the continuation $L_{f'}$ of $L$ has no such a
$k_0$-dominated splitting. 

We choose $f'$ in the residual subset
$\cG$ of $\diff$ in Lemma~\ref{l.homocliniclyapunov}. Then there is a
periodic point $Q_{f'}\in L_{f'}$ such that $\chi_i(Q_{f'})+
\chi_{i+1}(Q_{f'})>-\de$ and whose orbit has no $k_0$-dominated
splitting of index $i$. Otherwise, by Lemma~\ref{l.dominatedclosure},
the set $L_{f'}$ has a $k_0$-dominated splitting. 

Consider the point $Q=Q_f$. We take a first small path of
hyperbolic cocycles   $(\bar A_{n,t})_{t\in [0,1]}$,
$n=0,\dots,\ell-1$, $\ell=\pi(Q)$, over the orbit of $Q$ joining the
derivatives $Df$ and $Df'$. Note that, by definition, the cocycle $(\bar A_{n,1})$
does not have a $k_0$-dominated splitting and
the Lyapunov exponents of $\bar B_1= \bar A_{\ell-1,1}\circ \cdots
\circ \bar A_{0,1}$ satisfy $\chi_i(\bar B_1)+ \chi_{i+1}(\bar
B_1)>-\de$.


Observe that if $\chi_i(\bar B_1)>-\de$ we are done. Otherwise we
apply  Lemma~\ref{l.bobo} to the cocycle $\bar A_{n,1}$,
$n=0,\dots, \ell-1$, and $j=i$. This provides new  families of
linear maps $(\tilde A_{n,t})_{t\in [0,1]}$, $n=0,\dots,\ell-1$,
satisfying the conclusions of the lemma. Define the composition
$\tilde B_t$ as above. Let
$$
\tau \eqdef \chi_i(\tilde B_t)+\chi_{i+1}(\tilde B_t)>-\de.
$$
Note that by item (4) of Lemma~\ref{l.bobo} this number does not depend on $t$.

By item (6) in Lemma~\ref{l.bobo}, there is some $t_0$ such that
$$
\chi_i(\tilde B_{t_0})=\min \left(\frac{\tau -\de}2,\frac {-\delta} 2 \right).
$$ 
As the map
$\chi_i(\tilde B_{t})$ is non-decreasing (recall item (5) in
Lemma~\ref{l.bobo}) we have $\chi_i(\tilde B_{t})\le
\frac{-\de}2<0$ for all $t\in [0,t_0]$.
Also 
$$ \chi_{i+1}(\tilde B_t)\ge \tau - \min\left(\frac{\tau -\de}2,\frac {-\delta} 2\right)
\ge \frac{\tau+\delta} 2+\frac{\max(0,\tau)}{2}>0.$$
Therefore $(\tilde A_{n,t})_{n,t\in[0,t_0]}$ is a path of hyperbolic cocycles.

We next consider the concatenation of the paths of hyperbolic cocycles
$(\bar A_{n,t})_{t\in [0,1]}$ and $(\tilde A_{n,t})_{t\in
[0,t_0]}$.
The end of the proof is the same as the one of
Proposition~\ref{p.gbobo} and involves the definition of the sets
$\tilde K^s$ and $\tilde K^u$.
We apply Lemma~\ref{l.gourmelon} to get an $\varepsilon$-perturbation $g$ of $f$
along the orbit of $Q$ that is adapted to $H(P,f)$
 and to property $\fP_{i,\de}$, since by construction
$$ \chi_{i}(Q_g)=\chi_{i}(\tilde B_{t_0})=-\delta+
\min\left(\frac{\tau+\de}{2},\frac \delta 2\right)> -\de.$$
This ends the proof of the proposition.
\end{proof}
%
%


\section{``Robustizing" lack of domination} \label{s.robustizing}

In this section we analyze the existence of dominated splittings
for homoclinic classes. In some cases these splittings will have
several bundles.

\begin{defi}[Dominated splittings II]
\label{d.severalbundles} Let $\La$ be an invariant set of a
diffeomorphism $f$.
A $Df$-invariant splitting $E_1\oplus \cdots \oplus E_s$, $s\ge 2$, over the set
$\La$ is dominated if for all
$j\in \{1,\dots,s-1\}$ the splitting $E_1^j\oplus E_{j+1}^s$ is
dominated, where $E_1^j=E_1\oplus \cdots \oplus E_j$ and
$E_{j+1}^s=E_{j+1}\oplus \cdots \oplus E_s$.

As in the case of two bundles, the splitting is
\emph{$k$-dominated} if the splittings   $E_1^j\oplus E_{j+1}^k$
are $k$-dominated for all $j$.

There are analogous definitions for cocycles.
\end{defi}

Note that if there is a saddle $Q$ homoclinically related to $P$
such that $\chi_j(Q)=\chi_{j+1}(Q)$  then the class has no
dominated splitting of index $j$. Moreover, if
$$
\chi_{j-1}(Q)< \chi_j(Q)=\chi_{j+1}(Q)<\chi_{j+2}(Q) \quad
\mbox{and} \quad \la_j(Q), \la_{j+1}(Q)\in (\CC \setminus \RR)
$$
 then the lack of domination of the homoclinic class
 is $C^1$-robust.
In this section we study when the converse holds (up to
perturbations).

A  saddle $Q$ of a diffeomorphism $f$ satisfies
property $\fP_{j,j+1,\CC}$ if

\begin{equation} \label{e.complex}
\fP_{j,j+1,\CC}\eqdef
\begin{cases}
&\mbox{\textbf{(i)} $\chi_j(Q)=\chi_{j+1}(Q)$,}\\
&\mbox{\textbf{(ii)} $\chi_m(Q)\neq
\chi_j(Q)$ for all  $m\neq j,j+1$,}\\
&\mbox{\textbf{(iii)} $\la_j(Q)$ and $\la_{j+1}(Q)$ are non-real.}
\end{cases}
\end{equation}

The main technical step of our constructions is the next
proposition whose proof is postponed to the next section.
It immediately implies Theorem \ref{t.complex}.

\begin{prop}\label{p.complex}
For any $K>1$, $\pes>0$, and  $d\in \NN$, there is
a constant $k_0$ with the following property.

Consider a diffeomorphism $f\in \diff$, $\dim M=d$, such that the norms of
$Df$ and $Df^{-1}$ are bounded by $K$, a hyperbolic periodic
point $P$ of $s$-index $i$, and an integer $j\in \{1,\dots,d-1\}$, $j\ne i$.
Assume that the homoclinic class $H(P,f)$ is non trivial and has
no $k_{0}$-dominated splitting of index $j$.

Then there is a periodic point $Q$ that is homoclinically related with $P$ and an
$\varepsilon$-perturbation of $f$ along the orbit of $Q$ that is
adapted to $H(P,f)$ and property $\fP_{j,j+1,\CC}$.
\end{prop}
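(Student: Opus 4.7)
The plan is to build on Proposition~\ref{p.gbobo} and then further perturb to produce non-real multipliers at positions $j$ and $j+1$. Set $\ve' = \ve/3$ and let $k_0$ be the constant associated to $(K, \ve', d)$ by Proposition~\ref{p.gbobo}. Under the hypotheses, Proposition~\ref{p.gbobo} yields a saddle $Q_0$ homoclinically related to $P$ and an $\ve'$-perturbation $g_0$ of $f$ along the orbit of $Q_0$ adapted to $H(P,f)$ and property $\fP_{j,j+1}$: at $g_0$ the matrix $B_0 := Dg_0^{\pi(Q_0)}(Q_0)$ has all real multipliers with $|\la_j(B_0)| = |\la_{j+1}(B_0)| =: |\la|$ and all other multipliers of modulus different from $|\la|$. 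The separation of the remaining moduli produces a $Dg_0$-invariant dominated splitting $E_1 \oplus \cE \oplus E_2$ along the orbit of $Q_0$, whose middle bundle $\cE$ is 2-dimensional with fiber at $Q_0$ equal to the generalized eigenspace of $\la_j, \la_{j+1}$.

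I would next perform a second perturbation, within the $\ve'$-neighborhood of $g_0$, to render $\la_j(Q_0), \la_{j+1}(Q_0)$ non-real complex conjugates. Let $\sigma = \sign(\la_j(B_0)\,\la_{j+1}(B_0)) \in \{\pm 1\}$. In the case $\sigma = +1$, the eigenvalues $\la_j(B_0), \la_{j+1}(B_0)$ coincide in a common real value $\mu$, and an arbitrarily small additive perturbation $B_0|_{\cE} + \alpha J$ for a suitable rank-one matrix $J$ and small $\alpha > 0$ has a non-real complex conjugate pair of eigenvalues of modulus close to $|\mu|$ (e.g.\ $J$ skew-symmetric when $B_0|_{\cE}=\mu\,\mathrm{Id}$ yields eigenvalues $\mu\pm i\alpha$). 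This perturbation of the product is distributed evenly over the $\pi(Q_0)$ steps of the cocycle along $Q_0$; the per-step size $O(\alpha / \pi(Q_0))$ is $\ve'$-small once $\alpha$ is. The resulting path of cocycles stays hyperbolic (since $|\mu|\ne 1$), ends at a cocycle with the desired eigenvalue structure, and leaves the other multipliers unchanged. Applying Lemma~\ref{l.gourmelon} with compact disks $\tilde K^s \subset W^s(Q_0,g_0)$, $\tilde K^u \subset W^u(Q_0,g_0)$ containing transverse intersections with $W^{u,s}(P,g_0)$ realizes the path as a genuine $\ve'$-perturbation $g$ of $g_0$ preserving the homoclinic relation between $Q_0$ and $P$, and hence yielding $\fP_{j,j+1,\CC}$.

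The hard part is Case $\sigma = -1$, where $B_0|_{\cE}$ has eigenvalues $\la$ and $-\la$, so $\det B_0|_{\cE} = -\la^2 < 0$. Any path of hyperbolic cocycles close to $B_0$ keeps this restricted determinant negative, so its eigenvalues cannot merge into a complex conjugate pair (which would require positive restricted determinant) without the cocycle losing invertibility. I would evade this sign obstruction by replacing $Q_0$ with a periodic orbit traversing $Q_0$ twice. Since $Q_0 \in H(P, g_0)$ is non-trivial, $Q_0$ has a transverse homoclinic intersection; the Smale--Birkhoff theorem produces a horseshoe near the corresponding homoclinic orbit that contains a periodic orbit $Q_1$ of period $2\pi(Q_0)+k$ whose symbolic code is ``$Q_0$ twice, then a single homoclinic excursion of length $k$.'' By openness of dominated splittings, the splitting $E_1\oplus\cE\oplus E_2$ transports to an analogous splitting $\tilde E_1\oplus\cE'\oplus\tilde E_2$ along the orbit of $Q_1$, and the cocycle $B_1 = Dg_0^{\pi(Q_1)}(Q_1)$ decomposes (up to bounded conjugation) as $T\cdot B_0^2$, where $T$ is the derivative along the excursion. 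On the middle block, $B_0^2|_{\cE}=\la^2\,\mathrm{Id}$, so $B_1|_{\cE'}\approx \la^2\,T|_{\cE'}$. A preliminary $\ve'$-perturbation distributed over the $k$ excursion steps (using Lemma~\ref{l.gourmelon}, with $k$ chosen large enough) allows me to arrange $T|_{\cE'}$ to be a rotation of small non-trivial angle $\theta_0$, producing directly for $B_1$ a non-real complex conjugate pair of multipliers $\la^2 e^{\pm i\theta_0}$ at positions $j, j+1$; the remaining multipliers of $B_1$ are close to $\la_m(B_0)^2$, hence of moduli different from $|\la|^2$. Since $Q_1$ lies in the horseshoe together with $Q_0$, it is automatically homoclinically related to $P$. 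The most delicate ingredients are (a) the persistence of the middle invariant bundle $\cE'$ along the shadowing orbit (from openness of dominated splittings applied to the invariant set consisting of the orbit of $Q_0$ together with its homoclinic orbit), (b) preservation of the homoclinic relation with $P$ throughout all perturbations (handled by the disk-based formulation of Lemma~\ref{l.gourmelon}), and (c) quantitative control of each perturbation's size within $\ve'$.
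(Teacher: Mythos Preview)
Your strategy is close to the paper's: both identify the sign obstruction when $\la_j$ and $\la_{j+1}$ have opposite signs, and both pass to an auxiliary periodic orbit that shadows the original multiple times. But your orbit structure in Case~$\sigma=-1$ has a gap. You take $Q_1$ to shadow $Q_0$ twice and then perform a \emph{single} homoclinic excursion, so that $B_1|_{\cE'}\approx \la^2\,T|_{\cE'}$, and you then claim $T|_{\cE'}$ can be perturbed to a rotation. Nothing, however, controls the sign of $\det T|_{\cE'}$: if it is negative, no small path of cocycles on $\cE'$ reaches a rotation, since the product determinant varies continuously and rotations have determinant $+1$. The obstruction you removed by squaring $B_0$ simply reappears in $T$. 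The paper's orbit $Q_n$ (Lemma~\ref{l.pcomplex}, equation~\eqref{e.Qn}) instead makes \emph{two} excursions, so that every point of the orbit is paired with a nearby point; the central determinants then pair into products of equal signs and the total is automatically positive (Claim~\ref{c.orientation}). After that, \cite[Proposition~3.1]{BDP:03} is invoked on the two-dimensional central cocycle to produce the complex pair.

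Your item~(a) is also a gap. The splitting $E_1\oplus\cE\oplus E_2$ is defined only along the orbit of $Q_0$ (from its eigenvalue separation); openness of dominated splittings does not by itself extend it to the homoclinic orbit, because at the homoclinic point the forward-asymptotic and backward-asymptotic sub-bundles need not be compatible. The paper spends most of Section~\ref{s.proofof} on precisely this: Lemma~\ref{l.pathpcomplex} builds a path of continuous cocycles on all of $TM$ (not just on the periodic orbit), and Claim~\ref{c.path3} performs an additional small perturbation at the homoclinic point to force the needed transversalities; only then does the splitting extend to the full set $\Lambda$ and, by genuine openness, to a nearby horseshoe $K$ where the two-loop orbits $Q_n$ live.
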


\begin{rema}\label{r.complex}
The proof of the proposition provides a point $Q$ with arbitrarily
large period. In particular, there exist infinitely many periodic
points $Q$ satisfying the conclusion of the proposition.
\end{rema}

We postpone the proof of this proposition to
Section~\ref{s.proofof}. We now deduce from it
Corollaries~\ref{c.1} and \ref{c.noname} below.

\begin{corol}\label{c.1}
For any $K>1$, $\pes>0$, and  $d\in \NN$, there is
a constant $k_0$ with the following property.

Consider a diffeomorphism $f\in \diff$, $\dim M=d$, such that the norms of
$Df$ and $Df^{-1}$ are bounded by $K$, a homoclinic class $H(P,f)$ of $f$,
and integers $0<j_1<\dots<j_\ell<d$  that are different from the
$s$-index of $P$ and such that
there is no $k_0$-dominated splitting of index $j_k$ over $H(P,f)$ for every
$k\in \{1,\dots,\ell\}$.

Then there exists an $\varepsilon$-perturbation $g$ of $f$ supported in a small
neighborhood of $H(P,f)$ such that for
each $k\in \{1,\dots,\ell\}$ there exists a periodic point
$Q_{k,g}$ of $g$ homoclinically related to $P_g$ satisfying property
$\fP_{j_k,j_k+1,\CC}$ in equation~\eqref{e.complex}.

In particular, for every diffeomorphism $\bar g$ close to $g$ and for
every $k\in \{1,\dots,\ell\}$ there is no dominated splitting of
index $j_k$ over $H(P_{\bar g},\bar g)$.
\end{corol}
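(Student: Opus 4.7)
The strategy is to apply Proposition~\ref{p.complex} once for each index $j_k$ at the same diffeomorphism $f$, and then to glue the resulting adapted perturbations together, using that their supports can be taken disjoint.

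\textbf{Producing candidate orbits.} Apply Proposition~\ref{p.complex} to the constants $K,\varepsilon,d$ to obtain $k_{0}$. For each $k\in\{1,\dots,\ell\}$ the hypothesis ensures that $H(P,f)$ has no $k_{0}$-dominated splitting of index $j_{k}$, so the proposition furnishes a periodic point $Q_{k}$, homoclinically related to $P$ for $f$, together with an $\varepsilon$-perturbation of $f$ along the orbit of $Q_{k}$ adapted to $H(P,f)$ and to property $\fP_{j_{k},j_{k}+1,\CC}$. Using Remark~\ref{r.complex}, the orbit of each $Q_{k}$ may be chosen of arbitrarily large period, so the points $Q_{1},\dots,Q_{\ell}$ can be selected with pairwise disjoint orbits, each distinct from the orbit of $P$.

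\textbf{Simultaneous adapted perturbations.} For each $k$, fix transverse homoclinic intersections $Y_{k}^{s}\in W^{s}(Q_{k},f)\pitchfork W^{u}(P,f)$ and $Y_{k}^{u}\in W^{u}(Q_{k},f)\pitchfork W^{s}(P,f)$ and small compact disks $\Delta_{k}^{s}\subset W^{s}_{\vro}(Q_{k},f)$, $\Delta_{k}^{u}\subset W^{u}_{\vro}(Q_{k},f)$ containing them. Choose pairwise disjoint small neighborhoods $V_{k}$ of the orbit of $Q_{k}$, disjoint from the orbit of $P$ and from $\bigcup_{j}(\Delta_{j}^{s}\cup\Delta_{j}^{u})$, all contained in a prescribed neighborhood of $H(P,f)$. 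Applying the adapted-perturbation property with $V=V_{k}$ and compact sets $K^{s,u}=\Delta_{k}^{s,u}$ produces for each $k$ an $\varepsilon$-perturbation $g^{(k)}$ of $f$, supported in $V_{k}$, preserving $\Delta_{k}^{s,u}\subset W^{s,u}_{\vro}(Q_{k},g^{(k)})$, and making $Q_{k}$ satisfy $\fP_{j_{k},j_{k}+1,\CC}$ with $Q_{k}$ still homoclinically related to $P$.

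\textbf{Gluing.} Since the $V_{k}$'s are pairwise disjoint, the diffeomorphism defined by $g|_{V_{k}}=g^{(k)}|_{V_{k}}$ and $g|_{M\setminus\bigcup V_{k}}=f|_{M\setminus\bigcup V_{k}}$ is a well-defined $\varepsilon$-perturbation of $f$, supported in a neighborhood of $H(P,f)$. On the orbit of each $Q_{k}$ the derivative of $g$ coincides with that of $g^{(k)}$, so $Q_{k,g}\eqdef Q_{k}$ satisfies $\fP_{j_{k},j_{k}+1,\CC}$ for $g$. The local stable/unstable manifolds of $Q_{k}$ near its orbit are identical for $g$ and $g^{(k)}$; robustness of transverse intersections (taking $\varepsilon$ small) and the fact that $\Delta_{k}^{s,u}$ lie outside all the $V_{j}$ guarantees that the transverse homoclinic intersections near $Y_{k}^{s,u}$ with the unchanged manifolds $W^{u,s}(P,g)$ persist. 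Thus each $Q_{k,g}$ is homoclinically related to $P_{g}$ and belongs to $H(P_{g},g)$.

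\textbf{Robust non-domination and the main obstacle.} Property $\fP_{j_{k},j_{k}+1,\CC}$ for $Q_{k,g}$ provides two non-real multipliers of equal modulus at positions $j_{k},j_{k}+1$, strictly separated in modulus from the remaining ones; these conditions persist under any $C^{1}$-small perturbation $\bar g$ of $g$, as does the transverse homoclinic intersection with $P_{\bar g}$. Hence $Q_{k,\bar g}\in H(P_{\bar g},\bar g)$, and any dominated splitting of index $j_{k}$ over this class would restrict to a splitting of the same index over the orbit of $Q_{k,\bar g}$ separating the two equal-modulus non-real multipliers, which is impossible. The main delicate point of the argument is to realize all $\ell$ prescribed spectral properties at once while preserving the homoclinic relations with $P$; this is resolved by the disjoint-support gluing, made possible by Remark~\ref{r.complex} which allows the $Q_{k}$'s to be chosen with pairwise disjoint orbits.
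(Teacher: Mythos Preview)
Your proof is correct and follows essentially the same approach as the paper: apply Proposition~\ref{p.complex} once per index, use Remark~\ref{r.complex} to make the orbits of the $Q_k$ pairwise disjoint, choose disjoint neighborhoods $V_k$, and glue the adapted perturbations. The only minor difference is that the paper argues the persistence of the heteroclinic relations by arranging that the full \emph{orbits} of the heteroclinic points $Y_k^{s,u}$ (and of the disks) avoid the other $V_{k'}$, so that these orbits are literally unchanged when passing from $g^{(k)}$ to $g$; your formulation instead invokes robustness of transverse intersections, which also works but is slightly less direct (and your parenthetical ``taking $\varepsilon$ small'' is misleading, since $\varepsilon$ is fixed---what you really use is that the transverse intersections can be chosen robust enough for the given $\varepsilon$).
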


\begin{proof} By
Proposition~\ref{p.complex}, for each index $j_k$ there is a
periodic point $Q_k$ homoclinically related to $P$ and $\pes$-perturbations of $f$ along the orbit of $Q_k$ that are
adapted to $H(P,f)$ and to property $\fP_{j_k,j_k+1,\CC}$. For
each saddle $Q_k$ consider a pair of transverse heteroclinic
points
$$
Y_k^s \in W^s(Q_k,f)\pitchfork W^u(P,f) \quad \mbox{and} \quad
Y_k^u \in W^u(Q_k,f)\pitchfork W^s(P,f).
$$
For each $k$ we also fix compact disks
$$
K_k^s \subset W^s(Q_k,f) \quad \mbox{and} \quad K_k^u \subset
W^u(Q_k,f)
$$
of the same dimensions as $W^s(Q_k,f)$ and $W^u(Q_k,f)$ containing
$Y_k^s$ and $Y_k^u$ in their interiors. By Remark~\ref{r.complex},
we can assume that the orbits of the saddles $Q_k$ are different.
Thus there are small neighborhoods $V_1,\dots, V_\ell$ of these orbits whose
closures are pairwise disjoint and  such that for each $k\ne k'$
the orbits of $Y_k^s$ and $Y_k^u$ do not intersect $V_{k'}$. Thus
taking the disks $K^s_k$ and $K_k^u$ small enough, we can assume
that this also holds for the forward orbit of $K_k^s$ and the
backward orbit of $K_k^u$.

For each $k$ we get an adapted $\pes$-perturbation $g_k$ supported
in $V_k$ (and associated to the compact sets $K^s_k$ and $K^u_k$).
Since the supports of these perturbations are disjoint, we can
perform all them simultaneously obtaining a diffeomorphism $g$
that is $\pes$-close to $f$ and has saddles $Q_{k,g}$ satisfying
$\fP_{j_k,j_k+1,\CC}$, $j=1,\dots,\ell$.

It remains to check that these saddles are homoclinically related
to $P_g$. Observe that for each $k$ the points $Y_k^s$ and $Y_k^u$
are transverse heteroclinic points (associated to $Q_k$ and $P$)
for $g_k$.  The choices of the orbits of these heteroclinic points
and of the sets $V_j$ imply that $Y_k^s$ and $Y_k^u$ are also
transverse heteroclinic points (associated to $Q_k$ and $P$) for
$g$ (in fact, the orbits of the points $Y_k^s$ and $Y_k^u$ are the
same for $g_k$ and $g$). This completes the proof of the
corollary.
\end{proof}

We also get the following genericity result.

\begin{corol} \label{c.noname}
There exists a residual subset  $\cG$ of  $\diff$ such that every
diffeomorphism $f\in \cG$ satisfies the following property:

For every  $i,j\in\{1,\dots,d-1\}$, $i\ne j$, and for every
periodic point $P$ of $s$-index $i$ of $f$ such that
 there is no
dominated splitting of index $j$ over $H(P,f)$ there exists a
periodic point $Q$ homoclinically related to $P$ satisfying
property $\fP_{j,j+1,\CC}$.
\end{corol}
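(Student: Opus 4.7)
The plan is a standard Baire category argument based on Proposition~\ref{p.complex}. For each triple $(N,i,j)$ with $N \in \NN$ and $i,j \in \{1,\dots,d-1\}$, $i \ne j$, let $\cU_{N,i,j}$ denote the set of diffeomorphisms $f \in \diff$ such that every hyperbolic periodic saddle $P$ of $s$-index $i$ and period at most $N$ satisfies either
(a) $H(P,f)$ admits a dominated splitting of index $j$, or
(b) there exists a periodic point $Q$ homoclinically related to $P$ satisfying $\fP_{j,j+1,\CC}$ of equation~\eqref{e.complex}.
Setting $\cG \eqdef \bigcap_{N,i,j} \cU_{N,i,j}$, for any $f \in \cG$ and any saddle $P$ of $s$-index $i$ with no dominated splitting of index $j$ over $H(P,f)$, the membership $f \in \cU_{\pi(P),i,j}$ together with the failure of (a) forces (b) and yields the required $Q$. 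It thus suffices to prove that each $\cU_{N,i,j}$ is residual in $\diff$.

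For openness, I would restrict to the standard $C^1$-residual subset $\cR_0$ of Kupka--Smale diffeomorphisms at which $H(P,f) = C(P,f)$ for every hyperbolic periodic point $P$ and at which chain recurrence classes vary upper semicontinuously in the Hausdorff metric. On $\cR_0$ the periodic saddles of period at most $N$ are finite in number and have hyperbolic continuations. For such a saddle $P$: if (a) holds, a fixed dominance constant $k$ combined with upper semicontinuity of $C(P_g,g)$ and the standard persistence of $k$-dominated splittings on invariant sets contained in a prescribed neighborhood (see \cite[Appendix B]{BDV:04}) forces (a) to persist on a $C^1$-neighborhood of $f$; if (b) holds, the defining transverse heteroclinic intersections persist and the conditions of $\fP_{j,j+1,\CC}$ (non-real multipliers $\la_j(Q),\la_{j+1}(Q)$ of common modulus, distinct from the moduli of all other multipliers of $Q$) are plainly $C^1$-open. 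Hence $\cU_{N,i,j} \cap \cR_0$ is relatively open in $\cR_0$.

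For density within $\cR_0$, let $f \in \cR_0$ and enumerate the finitely many saddles $P_1,\dots,P_m$ of $s$-index $i$ and period at most $N$. For each $k$ where (a) fails, $H(P_k,f)$ has in particular no $k_0$-dominated splitting of index $j$ for the constant $k_0$ furnished by Proposition~\ref{p.complex}, and the proposition yields an arbitrarily small adapted perturbation of $f$ along the orbit of some periodic point $Q_k$ homoclinically related to $P_k$ that installs $\fP_{j,j+1,\CC}$ at $Q_k$ while preserving the homoclinic relation between $P_k$ and $Q_k$. By Remark~\ref{r.complex} I may take the $Q_k$'s on pairwise distinct orbits of sufficiently large period, so that the adapted perturbations can be arranged with pairwise disjoint supports and concatenated, as noted at the start of Section~\ref{s.adaptedfranks}, into a single $C^1$-small perturbation $g$ of $f$ lying in $\cU_{N,i,j}$. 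Thus $\cU_{N,i,j} \cap \cR_0$ is dense in $\cR_0$.

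Since $\cU_{N,i,j} \cap \cR_0$ is relatively open and dense in the residual set $\cR_0$, it is residual in $\diff$, and so is $\cU_{N,i,j}$; the countable intersection $\cG$ is therefore residual. The main technical subtlety is the openness step, for "no dominated splitting of a given index" is not itself a $C^1$-open property; this is circumvented by working on $\cR_0$ and observing that, whenever (a) fails at some $P$, Proposition~\ref{p.complex} lets us replace it by the open alternative (b), which then persists under further perturbation.
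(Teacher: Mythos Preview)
Your proof is correct and follows the same approach the paper indicates: the paper's own argument is the single sentence ``The corollary follows from standard genericity arguments after noting that for a homoclinic class $H(P,f)$ to have a saddle $Q$ homoclinically related to $P$ satisfying property $\fP_{j,j+1,\CC}$ is an open condition,'' and you have supplied precisely those standard arguments in detail. Your handling of the openness of alternative (a) via upper semicontinuity of chain recurrence classes on $\cR_0$, and of density via Proposition~\ref{p.complex} with disjointly supported adapted perturbations (as in the proof of Corollary~\ref{c.1}), is exactly what is needed.
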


The corollary follows from standard genericity arguments after
noting that for a homoclinic class $H(P,f)$ to have a saddle $Q$
homoclinically related to $P$ satisfying property
$\fP_{j,j+1,\CC}$ is an open condition.

\medskip

We are now ready to prove Corollary~\ref{c.complexb}.

\begin{proof}[Proof of Corollary~\ref{c.complexb}]
The residual subset $\cG$ in Corollary~\ref{c.noname} can be
chosen with the following additional property, see \cite{BC:04}.
For every $f\in\cG$ and for every pair of hyperbolic periodic points
$P$ and $Q$ of $f\in \cG$ that are in the same chain recurrent class the following holds
\begin{itemize}
\item
the homoclinic classes of $P$ and $Q$ are equal and
\item
there is a neighborhood $\cU$ of $f$ such that for all $g\in \cU$
the chain recurrence classes of $P_g$ and $Q_g$ are equal.
\end{itemize}
Now it is enough to consider a point $Q\in H(P,f)$ of $s$-index
different from the one of $P$ and to apply
Corollary~\ref{c.noname} to $P$ (if $j$ is different to the index
of $P$) or to $Q$ (otherwise).
\end{proof}

\medskip

\noindent{\emph{Comment.}} We wonder if in the conclusion
of Corollary~\ref{c.complexb} it is possible to consider
homoclinic classes instead of chain recurrence classes. One
difficulty is that in general one may have two hyperbolic periodic
points with different stable index  that are robustly in the same
chain recurrence class but whose homoclinic classes do not
coincide robustly. More precisely:

\begin{ques}
\label{q.=} Consider an open set $\,\cU$ of $\diff$ and two
hyperbolic saddles $P_f$ and $Q_f$ whose continuations are defined for all $f\in
\cU$, have different stable indices, and whose chain recurrence
classes coincide for all $f\in \cU$.

Does there exist an open and dense subset $\cV$ of $\cU$ such that
for any $f\in \cV$ one has $Q_f\in H(P_f,f)$? Or even more, $H(P_f,f)=H(Q_f,f)$?
\end{ques}

By \cite{BC:04} the answer to this question is affirmative when
the saddles have the same index. It is also true when the chain
recurrence class is partially hyperbolic with a  central direction
that splits into one-dimensional central directions. This
follows using quite standard arguments and we will provide the details
of this construction in a forthcoming note.

\section{Obtaining non-real multipliers: Proof of Proposition~\ref{p.complex}} \label{s.proofof}

In this section we prove Proposition~\ref{p.complex}. This
proposition follows from the next lemma:

\begin{lemm}\label{l.pcomplex}
Consider a homoclinic class $H(P,f)$ and $j\in \NN$ satisfying the
hypothesis of Proposition~\ref{p.complex}. Then there are a hyperbolic
periodic point $Q$  homoclinically related to $P$ and path
of cocycles $(A_{i,t})_{t\in [0,1]}$, $0\leq i<\ell$ and $\ell=\pi(Q)$,
over the orbit of $Q$ that
are $\varepsilon$-perturbations of $Df(f^i(Q))$ and satisfy the following
properties:
\begin{enumerate}
\item[{\bf{(A)}}]
the composition $B_t=A_{\ell-1,t}\circ \cdots \circ A_{0,t}$ is
hyperbolic for all $t\in [0,1]$,
\item[{\bf{(B)}}]
$A_{i,0}=Df(f^i(Q))$ for all $i=0,\dots \ell-1$, and
\item[{\bf{(C)}}]
the multipliers $\la_m$  and the exponents of $\chi_m$ of the
composition $B_1$ satisfy the conclusions in Proposition~~\ref{p.complex}:
$$
\chi_j=\chi_{j+1}, \qquad \chi_m\neq \chi_j \quad  \mbox{if $m\neq
j,j+1$}, \qquad \la_j, \la_{j+1}\not\in \RR.
$$
\end{enumerate}
\end{lemm}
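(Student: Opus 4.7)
The plan is to build the required path of cocycles in two stages and then realize it dynamically via the Generalized Franks' Lemma. In the first stage I use Proposition~\ref{p.gbobo} to coalesce the $j$-th and $(j+1)$-th Lyapunov exponents while keeping all others strictly separated and all multipliers real; in the second stage I rotate within the two-dimensional dominated subbundle produced by this coalescence to turn the two real multipliers of equal modulus into a complex conjugate pair.

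Concretely, applying Proposition~\ref{p.gbobo} to the constants $(K,\varepsilon/2,d)$ and using Lemma~\ref{l.homocliniclyapunov} to control the period, I obtain a hyperbolic periodic point $Q$ homoclinically related to $P$, of arbitrarily large period $\ell=\pi(Q)$, together with a path $(A'_{i,s})_{s\in[0,1]}$ of hyperbolic cocycles with $A'_{i,0}=Df(f^i(Q))$ whose composition $B'_1$ satisfies property $\fP_{j,j+1}$. By Lemma~\ref{l.dominatedclosure}, the strict exponent separation of $B'_1$ promotes to a dominated splitting $\RR^d=E^-\oplus F\oplus E^+$ along the orbit of $Q$, where $F$ is a two-dimensional $Df$-invariant bundle on which the cocycle has a single Lyapunov exponent, equal to $\chi_j(B'_1)\neq 0$.

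In the second stage I fix orthonormal frames of $F$ along the orbit of $Q$ and denote by $R_i(\alpha)$ the rotation by angle $\alpha$ inside $F_{f^i(Q)}$, extended by the identity on $E^-\oplus E^+$. For a small fixed angle $\theta$ define
\[
A'_{i,\,1+s}\eqdef A'_{i,1}\circ R_i(s\theta/\ell),\qquad s\in[0,1],\ i=0,\dots,\ell-1.
\]
Each of these perturbations has $C^0$-size $O(1/\ell)$, so by choosing $\ell$ large enough the concatenated path $(A'_{i,t})_{t\in[0,2]}$ stays within the $\varepsilon$-neighborhood of $Df$. A short commutator computation shows that the restriction to $F$ of the corresponding product at parameter $s$ is, up to an error vanishing as $\ell\to\infty$, the composition of $B'_1|_F$ with a rotation by total angle $s\theta$; provided $B'_1|_F$ has two equal real eigenvalues, any nonzero final rotation produces a complex conjugate pair. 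Hyperbolicity is preserved throughout, since the Lyapunov exponents along $F$ remain close to $\chi_j(B'_1)\neq 0$ and the dominated splitting is robust, while the exponents outside $F$ are preserved exactly. Concatenating the two paths and invoking Lemma~\ref{l.gourmelon} with compact disks $\tilde K^s\subset W^s(Q,f)$ and $\tilde K^u\subset W^u(Q,f)$ containing transverse heteroclinic points with $W^u(P,f)$ and $W^s(P,f)$ (as in the proof of Proposition~\ref{p.gbobo}) then delivers the desired adapted $\varepsilon$-perturbation.

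The main obstacle I foresee is precisely the assumption that the two terminal multipliers of $B'_1|_F$ coincide rather than being negatives of one another: Lemma~\ref{l.bobo} guarantees only equality of their moduli, and if they end up of opposite signs then $B'_1|_F$ has negative determinant, so the discriminant of its characteristic polynomial on $F$ stays strictly positive under $C^0$-small deformations and no rotation in $F$ can create complex eigenvalues. Overcoming this requires a careful refinement of Stage~1 — exploiting items~(3)--(5) of Lemma~\ref{l.bobo} and the arbitrary largeness of $\ell$ to distribute an orientation-matching correction along the long orbit of $Q$ so that at the moment of coalescence the two real multipliers actually agree — after which Stage~2 applies and produces the cocycle satisfying~(C).
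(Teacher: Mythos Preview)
You have correctly isolated the central difficulty --- the possibility that, at the end of Stage~1, the two equal-modulus real multipliers $\la_j(B'_1)$ and $\la_{j+1}(B'_1)$ have opposite signs --- but your proposed resolution is a genuine gap, not a refinement. Once the dominated two-dimensional bundle $F$ is in place, the restricted determinant $\det(B'_1|_F)=\la_j\la_{j+1}$ is a nonzero continuous function of the cocycle, and a continuous path of cocycles in $GL(d,\RR)$ keeping $F$ invariant cannot change its sign. ``Distributing an orientation-matching correction along the orbit'' would require flipping the sign of $\det(A'_{i,1}|_F)$ at an odd number of points $f^i(Q)$, and each such flip forces the $2\times 2$ matrix through a singular point, which is forbidden. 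Items (3)--(5) of Lemma~\ref{l.bobo} control only moduli of multipliers, not their signs, so they give no leverage here; nor does the largeness of $\ell$. In short, the sign obstruction cannot be removed by perturbing along the \emph{fixed} orbit of $Q$.

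The paper's mechanism is different in kind: it does not try to repair the sign on the original orbit, but instead \emph{changes the periodic point}. After obtaining $\bar Q$ and the path of cocycles as in your Stage~1, the paper (Lemma~\ref{l.pathpcomplex}) extends this path from the orbit of $\bar Q$ to a continuous path of cocycles on all of $TM$, hyperbolic over a small horseshoe $K\supset\{\bar Q\}$ and carrying a dominated splitting $E\oplus E^{j,j+1}\oplus F$ over $K$. Inside $K$ one then finds periodic points $Q_n$ whose orbits shadow the basic homoclinic loop of $\bar Q$ \emph{twice} (``two-loop'' orbits). Because the central determinant along each loop is essentially the same number $\delta$, the determinant along $Q_n$ is close to $\delta^2>0$, so the restricted cocycle on $E^{j,j+1}$ is orientation-preserving over $Q_n$ regardless of the sign issue at $\bar Q$ (this is Claim~\ref{c.orientation}). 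With positive determinant secured, and with the lack of domination of $E^{j,j+1}$ inherited from $\bar Q$, one then invokes \cite[Proposition~3.1]{BDP:03} to produce complex conjugate multipliers; this replaces your Stage~2 rotation argument, whose commutator estimate is in any case not justified as stated (the rotations $R_i$ need not interact coherently with the $A'_{i,1}$ across different fibers). The periodic point delivering the conclusion of the lemma is $Q=Q_n$, not the original $\bar Q$.
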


We briefly introduce some formalism that we will use only in this section. 
Consider a set $\Sigma$ and a bijection $g\colon \Sigma \to \Sigma$.
Let $E$ be a vector bundle over the 
base $\Sigma$ such that its fibers $E_x$, $x\in\Sigma$,  are endowed with an Euclidean metric. 
A {\emph{linear cocycle}} on $E$ over $g$ is a map 
$\cA\colon E \to E$ that sends each fiber $E_x$ to a fiber $E_{g(x)}$ by a linear isomorphism $\cA_{x}$. 
The map $g$ is called the {\emph{base transformation}} of the cocycle $\cA$. 

The {\em distance} between two linear cocycles $\cA$ and $\cB$ above the same base transformation $g$ is 
$$
\dist(\cA,\cB)=\sup_{x\in \Sigma}\{\|\cA_{x}-\cB_{x}\|,\|{(\cA_{x}})^{-1}-{(\cB_{x})}^{-1}\|\}.
$$
A {\emph{path of cocycles}} defined on the bundle $E$ is a one-parameter family of cocycles $(\cA_t)_{t\in[0,1]}$ above the same base transformation $g$
such that the map $t\mapsto \cA_t$ is continuous for the metric above. 
The {\emph{radius}} of the path $(\cA_t)_{t\in[0,1]}$ is defined by
$$
\max_{t\in [0,1]}
\dist(\cA_0,\cA_t).
$$
Here we only deal with continuous cocycles (for the ambient topology of $E$) 
whose base transformations are diffeomorphisms or  restrictions of diffeomorphisms to invariant subsets of the ambient.

Finally, hyperbolicity and domination of cocycles are defined in the natural way, see for example
Definition~\ref{d.severalbundles}.

We will deduce Lemma~\ref{l.pcomplex} from the following result:
\begin{lemm}\label{l.pathpcomplex}
Consider a homoclinic class $H(P,f)$ and $j\in \NN$ satisfying the
hypothesis of Proposition~\ref{p.complex}.  Then there is an arbitrarily small path of
continuous cocycles $(\cA_t)_{t\in[0,1]}$ on $TM$
above the diffeomorphism $f$,
a point $\bar Q$ homoclinically related to $P$,
and a horseshoe $K$ containing $\bar Q$ 
such that:
\begin{itemize}
\item $\cA_0$ coincides with $Df$,
\item the cocycle $\cA_t$ restricted to $T_KM$ is hyperbolic, for all $t\in[0,1]$,
\item the cocycle $\cA_1$ restricted to $T_KM$ 
has a dominated splitting 
$$
T_K M=E\oplus E^{j,j+1}\oplus F
$$ 
such that $E$ has dimension $j-1$ and $E^{j,j+1}$ has dimension $2$,
\item 
the cocycle $\cA_1$ restricted
to the (periodic) orbit of $\bar Q$ by $f$
does not admit any dominated splitting over $E^{j,j+1}$.
\end{itemize}
\end{lemm}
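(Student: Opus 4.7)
The plan is to extract a periodic point $\bar Q$ homoclinically related to $P$ whose orbit lacks $k_0$-domination at index $j$, apply Lemma~\ref{l.bobo} along that orbit to produce the path of cocycles, extend the perturbation from the orbit to all of $TM$ by a bump-function interpolation, and finally choose a sufficiently small horseshoe $K\ni\bar Q$ around that orbit. Let $k_0,\ell_0$ be the constants provided by Lemma~\ref{l.bobo}. By the hypothesis of Proposition~\ref{p.complex}, Lemma~\ref{l.dominatedclosure}, and the density in $H(P,f)$ of saddles of period $\geq \ell_0$ homoclinically related to $P$, at least one such saddle $\bar Q$ has no $k_0$-dominated splitting of index $j$ along its orbit; otherwise the closure of the union of those orbits would inherit one, contradicting the hypothesis.

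Next I would apply Lemma~\ref{l.bobo}, together with the refinement of Remark~\ref{r.bobo}, to the cocycle $Df$ along the orbit of $\bar Q$ at the index $j$. The output is a family $(A_{n,t})_{t\in[0,1]}$ with $A_{n,0}=Df(f^n(\bar Q))$ whose composition $B_t$ preserves $\chi_j+\chi_{j+1}$, satisfies $\chi_j(B_1)=\chi_{j+1}(B_1)$ and $\chi_m(B_1)\neq\chi_j(B_1)$ for $m\neq j,j+1$, and has only real multipliers. Since $j\neq i$, the two exponents $\chi_j(B_0)$ and $\chi_{j+1}(B_0)$ have the same sign; monotonicity along the path (items (4)--(5) of Lemma~\ref{l.bobo}) preserves this sign, so every $B_t$ stays hyperbolic and $\bar Q$ retains its $s$-index $i$. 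The strict inequalities $|\la_{j-1}(B_1)|<|\la_j(B_1)|=|\la_{j+1}(B_1)|<|\la_{j+2}(B_1)|$ yield a $k$-dominated splitting $E\oplus E^{j,j+1}\oplus F$ of $B_1$ over the orbit of $\bar Q$ with the prescribed dimensions, while the equality of the two central exponents forbids any further dominated splitting inside $E^{j,j+1}$. I then extend $(A_{n,t})$ to a path of continuous cocycles $(\cA_t)$ on $TM$ above $f$ by trivializing $TM$ on a small tubular neighborhood of the orbit of $\bar Q$ and interpolating with a bump function, so that $\cA_t$ agrees with $Df$ outside this neighborhood and the whole path has radius comparable to $\varepsilon$.

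For the horseshoe, since $\bar Q$ is homoclinically related to $P$, the standard Birkhoff--Smale construction at a transverse homoclinic point of $\bar Q$ produces a locally maximal hyperbolic set $K\ni\bar Q$ contained in any prescribed neighborhood of the orbit of $\bar Q$. Choosing this neighborhood small enough (and $\varepsilon$ small) guarantees (i) that $K$ remains hyperbolic for every $\cA_t$, because $K$ is hyperbolic for $Df$ and the perturbation is uniformly small; and (ii) that the $k$-dominated splitting of $B_1$ on the orbit of $\bar Q$ extends, by openness of dominated splittings, to a dominated splitting $E\oplus E^{j,j+1}\oplus F$ of $\cA_1$ on $T_KM$. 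The main obstacle is coordinating these parameter choices: the domination constant $k$ for $B_1$ is governed by the spectral gaps produced by Lemma~\ref{l.bobo}, which in turn depend on the period of $\bar Q$ and on $\varepsilon$; the diameter of $K$ must then be adjusted in terms of $k$. Fixing $\varepsilon$ first, then $\bar Q$, and finally the neighborhood of its orbit inside which $K$ lies, resolves the interdependency.
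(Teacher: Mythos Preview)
Your argument has a genuine gap at the horseshoe step. The claim that the Birkhoff--Smale construction yields a horseshoe $K\ni\bar Q$ contained in \emph{any prescribed neighborhood of the orbit of $\bar Q$} is false: such a horseshoe is built from a transverse homoclinic point $X$ of $\bar Q$, and it necessarily contains points shadowing the entire orbit of $X$, whose ``excursion'' segment is bounded away from the orbit of $\bar Q$. What is true is that $K$ can be taken Hausdorff-close to the set $\Lambda=\{f^n(\bar Q)\}\cup\{f^n(X):n\in\ZZ\}$, not to the periodic orbit alone. Consequently, your appeal to openness of dominated splittings fails: openness lets you transport a dominated splitting from $\Lambda$ to a nearby $K$, but not from the orbit of $\bar Q$ to a set $K$ that is \emph{not} Hausdorff-close to it. The same issue weakens your hyperbolicity argument for $\cA_t|_{T_KM}$, since the perturbation lives over a neighborhood of the orbit of $\bar Q$ while $K$ extends well beyond it.

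The paper repairs this by working over $\Lambda$ rather than over the periodic orbit, and this forces a more elaborate three-step construction of the path: a first short path $\cA_t^{[1]}$ that makes the cocycle constant near $\bar Q$; a second path $\cA_t^{[2]}$ extending your $(A_{n,t})$ to $TM$ in a way that keeps $\Lambda$ hyperbolic for every $t$ (this requires tracking the stable/unstable bundles along the homoclinic orbit, not just a bump-function interpolation); and a third short path $\cA_t^{[3]}$ that perturbs the cocycle at $X$ so that the candidate bundles $E,\tilde E,F,\tilde F$ coming from $\bar Q$ become transverse at $X$, which is exactly what is needed to push the dominated splitting $E\oplus E^{j,j+1}\oplus F$ from the orbit of $\bar Q$ to all of $\Lambda$. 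Only after this does the paper choose $K$ Hausdorff-close to $\Lambda$ and invoke openness. Your ``fix $\varepsilon$, then $\bar Q$, then the neighborhood'' scheme does not circumvent this, because no choice of neighborhood of the orbit of $\bar Q$ contains a horseshoe through $\bar Q$.
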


Here a small path, means path of small radius.

\begin{proof}[Proof of Lemma~\ref{l.pathpcomplex}]

Observe first that arguing as in the previous propositions we
just get a periodic point $\bar Q$ homoclinically related to $P$
and a small path of hyperbolic cocycles $(\bar A_{i,t})_{t\in[0,1]}$,
$0\leq i<\pi(\bar Q)$, defined over the orbit of $\bar Q$ such that
the Lyapunov exponents  of the final composition $\bar B_1$
are real and
$\chi_j(\bar B_1)$ and $\chi_{j+1}(\bar
B_1)$ are equal, see
Proposition~\ref{p.gbobo}. Moreover, by Remark~\ref{r.bobo}, we
can assume that, for all $m\ne j,j+1$, the $m$-th exponent
$\chi_m(\bar B_1)$ is different from $\chi_j(\bar B_1)=\chi_{j+1}
(\bar B_1)$ for all $m\ne j,j+1$. 
Note that if the  multipliers $\lambda_j(\bar B_1)$ and $\lambda_{j+1}(\bar B_1)$ 
are equal then one can make 
them non-real and conjugate by an arbitrarily  small perturbation. 
However they might have opposite signs, which is why Lemma~\ref{l.pcomplex} is not obvious. 

We now go to the details of the proof of the lemma.
The path $\cA_t$ of cocycles is obtained as a concatenation of the following three paths.
First fix a transverse homoclinic point $X$ for $\bar Q$ and let
$$
\Lambda=\{f^n(\bar Q), 0\leq n <\pi(\bar Q)\} \cup \{f^n(X), n\in \ZZ\}.
$$
The compact invariant set $\Lambda$ is hyperbolic for the cocycle $Df$. 
\begin{itemize}
\item The first path $\cA_t^{[1]}$ ``linearizes" the dynamics  around  $\bar Q$. 
\item The second path $\cA_t^{[2]}$ is a path of cocycles on $TM$ that extends the path $(\bar A_{i,t})_{t\in[0,1]}$ of cocycles over the orbit of $\bar Q$ introduced above
in such a way that the set $\Lambda$ is a hyperbolic set for all $t$. 
\item The third path $\cA_t^{[3]}$  provides a  cocycle having the required dominated splitting over a horseshoe containing the set $\Lambda$.
\end{itemize}

For simplicity of notations, we will assume that $\bar Q$ is a fixed point for $f$ (the argument is identical in the general case). 
Thus we write  $(\bar A_{t})_{t\in[0,1]}$ instead of $(\bar A_{i,t})_{t\in[0,1]}$.
In what follows, the path of cocycles $(\bar A_{t})_{t\in[0,1]}$ becomes a path of matrices of $GL(d,\RR)$.

\medskip

\noindent \textbf{(I) The first path of cocycles $\cA_t^{[1]}$.} 
Fix a chart around the point $\bar Q$ so that for any $x$ in a neighborhood $V$ of the orbit of $\bar Q$, we can identify the derivative $Df$  (or any neighboring cocycle) at $x$ to a matrix of $GL(d,\RR)$. 

\begin{clai}\label{c.path1}
There is an arbitrarily small path $(\cA_t^{[1]})_{t\in [0,1]}$ of continuous cocycles  on $TM$ above $f$ 
and a neighborhood $W\subset V$ of $\bar Q$ such that 
\begin{itemize}
\item
$\cA_0^{[1]}=Df$,
\item by considering the restriction to the fiber of each point $x\in W$, the cocycle $\cA_1^{[1]}$ is identified to the derivative of $f$ at $\bar Q$,
\item the set $\Lambda$ is hyperbolic for all the cocycles $\cA_t^{[1]}$.
\end{itemize}
\end{clai}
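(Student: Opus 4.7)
My plan is to construct $\cA_t^{[1]}$ as an interpolation in a chart around $\bar Q$ between $Df$ and the constant matrix $Df(\bar Q)$. First I would fix nested open neighborhoods $W\subset\overline W\subset W'\subset V$ of $\bar Q$ with $f(W')\subset V$, together with a smooth bump function $\varphi\colon M\to[0,1]$ satisfying $\varphi\equiv 1$ on $W$ and $\operatorname{supp}(\varphi)\subset W'$. Working in the fixed chart, so that $Df(x)$ and $Df(\bar Q)$ are both matrices in $GL(d,\RR)$ for $x\in W'$, I would set
\[
\cA_t^{[1]}(x) \;=\; \bigl(1-t\,\varphi(x)\bigr)\,Df(x) \;+\; t\,\varphi(x)\,Df(\bar Q)\qquad(x\in W'),
\]
and $\cA_t^{[1]}=Df$ outside $W'$. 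At $t=0$ we have $\cA_0^{[1]}=Df$, and at $t=1$, for any $x\in W$ one has $\varphi(x)=1$, so $\cA_1^{[1]}(x)$ is identified in the chart with the constant matrix $Df(\bar Q)$, which is the second bullet.

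Next I would verify that the radius of the path can be made arbitrarily small. By continuity of $Df$ at $\bar Q$, shrinking $W'$ makes $\|Df(x)-Df(\bar Q)\|$ and $\|Df(x)^{-1}-Df(\bar Q)^{-1}\|$ smaller than any prescribed $\varepsilon$ on $W'$; then, for every $t\in[0,1]$ and every $x$, the interpolant $\cA_t^{[1]}(x)$ stays within $\varepsilon$ of $Df(x)$, is invertible (as a small perturbation of the invertible matrix $Df(\bar Q)$), and has inverse close to $Df(x)^{-1}$. So $t\mapsto\cA_t^{[1]}$ is a continuous path of cocycles on $TM$ above $f$ of arbitrarily small radius.

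The step that I expect to be the main one is the persistence of hyperbolicity of $\Lambda$ along the path. The key observation is that $\Lambda\cap W'$ consists of the fixed point $\bar Q$ together with the tails $\{f^n(X):|n|\ge N\}$ for some large $N$, which accumulate on $\bar Q$. At $\bar Q$ the interpolation is trivial, since both endpoints of it equal $Df(\bar Q)$, so $\cA_t^{[1]}(\bar Q)=Df(\bar Q)$ for every $t$; and on the tails $\|Df(f^n(X))-Df(\bar Q)\|\to 0$ as $|n|\to\infty$, hence $\cA_t^{[1]}\big|_\Lambda$ is uniformly $C^0$-close to $Df\big|_\Lambda$. Since uniform hyperbolicity of a continuous linear cocycle over a fixed base dynamics on a compact invariant set is $C^0$-open in the cocycle (cone-field criterion), for $W'$ chosen small enough the set $\Lambda$ stays hyperbolic along the entire path $(\cA_t^{[1]})_{t\in[0,1]}$.

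No further subtlety arises: the perturbation lives inside the single chart where the matrix identifications make sense, is trivial on the complement of $W'$, and thereby defines a continuous cocycle over $f$ on all of $TM$. The periodic case (where $\bar Q$ is not fixed) is handled identically by repeating the construction inside a chart around each iterate $f^i(\bar Q)$ and choosing the bump functions with pairwise disjoint supports.
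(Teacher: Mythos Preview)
Your proof is correct and follows essentially the same approach as the paper: build the endpoint cocycle via a bump function (the paper says ``unit partition''), interpolate linearly between $Df$ and this endpoint, and conclude hyperbolicity of $\Lambda$ along the path from the $C^0$-openness of hyperbolicity. Your formula $(1-t\varphi(x))Df(x)+t\varphi(x)Df(\bar Q)$ is exactly what the paper's two-step construction (partition of unity followed by the $(1-t,t)$-barycenter) produces, and your additional remarks on the structure of $\Lambda\cap W'$ are correct but not strictly needed, since the uniform smallness of the radius already gives uniform $C^0$-closeness on all of $\Lambda$.
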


\begin{proof}
By a unit partition, we build a cocycle $\cA_1^{[1]}$ satisfying the second item of the claim, for some small neighborhood $W$ of $\bar Q$. 
This cocycle can be chosen arbitrarily close to $Df$ (just take $W$ small enough). On each fiber of $TM$, 
take for the matrix of $\cA_t^{[1]}$  the $(1-t,t)$-barycenter of the matrices of $Df$ and $\cA_1^{[1]}$. 
Since the set $\Lambda$ is hyperbolic for $Df$, it will also be for all the cocycles $\cA_t^{[1]}$, provided we chose $\cA_1^{[1]}$ close enough to $Df$.
\end{proof}

\noindent \textbf{(II) The second path of cocycles $\cA_t^{[2]}$.} 
Fix a neighborhood $W$ of $\bar Q$ and a path  $(\cA_t^{[1]})_{t\in [0,1]}$ as in Claim~\ref{c.path1}.

\begin{clai}\label{c.path2}
There is a  path $(\cA_t^{[2]})_{t\in [0,1]}$ of continuous cocycles on $TM$ above $f$ such that: 
\begin{itemize}
\item
$\cA_0^{[2]}=\cA_1^{[1]}$,
\item its  radius is arbitrarily close to that of $\left(\bar A_t\right)_{t\in[0,1]}$, 
\item $\cA_{1}^{[2]}$ coincides with $\bar A_{1}$ over $\bar Q$,
\item for all $t\in [0,1]$, the set $\Lambda$ is hyperbolic for the cocycle $\cA_t^{[2]}$.
\end{itemize}
\end{clai}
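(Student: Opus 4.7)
The plan is to obtain $\cA_t^{[2]}$ by modifying $\cA_1^{[1]}$ only in the neighborhood $W$ of $\bar Q$ provided by Claim~\ref{c.path1}, interpolating towards the prescribed matrices $\bar A_t$. Recall that on $W$ the cocycle $\cA_1^{[1]}$ is identified, via the chosen chart, with the constant matrix $\bar A_0 = Df(\bar Q)$. Fix a smooth bump function $\phi\colon M\to [0,1]$ with $\phi(\bar Q)=1$ and compact support contained in $W$, and define in the chart
$$
\cA_t^{[2]}(x) \;=\; \cA_1^{[1]}(x) \;+\; \phi(x)\,\bigl(\bar A_t - \bar A_0\bigr) \qquad \text{for } x\in W,
$$
and $\cA_t^{[2]}(x)=\cA_1^{[1]}(x)$ otherwise. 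This is a continuous path of cocycles above $f$, coinciding with $\cA_1^{[1]}$ at $t=0$, and at $x=\bar Q$ equal to $\bar A_t$ (so $\cA_1^{[2]}(\bar Q)=\bar A_1$). This immediately yields the first and third bullets of the claim.

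For the radius, the above formula gives
$$
\|\cA_t^{[2]}(x) - \cA_0^{[2]}(x)\|\le \|\bar A_t - \bar A_0\|,
$$
and the analogous bound for inverses follows because $\cA_1^{[1]}$ is uniformly invertible on $W$ (being close to $Df(\bar Q)$), so inversion is Lipschitz there. Hence the radius of $(\cA_t^{[2]})_{t\in[0,1]}$ is at most that of $(\bar A_t)_{t\in[0,1]}$, up to a small correction from the inverse estimate that can be made arbitrarily small by taking $W$ small enough.

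The delicate point is the hyperbolicity of $\Lambda$ along the whole path. By Claim~\ref{c.path1}, $\Lambda$ is hyperbolic for $\cA_1^{[1]} = \cA_0^{[2]}$. The difference
$$
\cA_t^{[2]} - \cA_1^{[1]} = \phi(\cdot)\,(\bar A_t - \bar A_0)
$$
is uniformly bounded in the cocycle metric by the radius of $(\bar A_t)$, which we are free to shrink at the start of the proof of Lemma~\ref{l.pathpcomplex} by reducing $\varepsilon$. Since hyperbolicity of a compact invariant set is $C^0$-robust under perturbations of the cocycle above the same base transformation, all $\cA_t^{[2]}$ leave $\Lambda$ hyperbolic. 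The main obstacle to watch is that the orbit of the transverse homoclinic point $X$ meets $W$ at infinitely many iterates, so the modification is not concentrated on finitely many points; nevertheless, the modification is uniformly small in norm, which is exactly what $C^0$-robustness of hyperbolicity requires.
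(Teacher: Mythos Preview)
Your construction via a bump function is pleasantly simple, but the hyperbolicity argument has a genuine circularity that the paper's proof is specifically designed to avoid.

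You claim that hyperbolicity of $\Lambda$ persists because the perturbation $\phi(\cdot)(\bar A_t-\bar A_0)$ is uniformly small, ``which we are free to shrink at the start of the proof of Lemma~\ref{l.pathpcomplex} by reducing $\varepsilon$''. The difficulty is that the robustness threshold $\delta(\Lambda)$ for hyperbolicity depends on $\Lambda$, hence on $\bar Q$; and $\bar Q$ is chosen \emph{after} $\varepsilon$ (via the constants $k_0(\varepsilon)$, $\ell_0(\varepsilon)$ of Lemma~\ref{l.bobo}). There is no a~priori lower bound on $\delta(\Lambda)$ in terms of $\varepsilon$: as $\varepsilon\to 0$ one is forced to take periodic points with weaker and weaker domination of index $j$, and the associated sets $\Lambda$ may well have hyperbolicity constants (hence robustness thresholds) tending to $0$. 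Conversely, for a \emph{fixed} $\bar Q$ with $\chi_j(\bar Q)<\chi_{j+1}(\bar Q)$ the orbit does admit a $k$-dominated splitting of index $j$ for large $k$, so Lemma~\ref{l.bobo} does not produce paths $(\bar A_t)$ of arbitrarily small radius along that orbit. Either way, you cannot arrange the radius of $(\bar A_t)$ to fall below $\delta(\Lambda)$, and the robustness appeal fails. Note also that your perturbation is supported on the whole of $W$, hence on infinitely many iterates of $X$, so this is not a finite-support perturbation to which elementary arguments would apply.

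The paper circumvents this by building $\cA_t^{[2]}$ over $\Lambda$ so that hyperbolicity holds \emph{by construction}, independently of the size of $(\bar A_t)$. Along the orbit of $X$ it inserts small correction maps $\theta_{n,t}$ that carry the stable and unstable directions of $\bar A_{t_n}$ to those of $\bar A_t$; as a result the original splitting $E^s_X\oplus E^u_X$ is sent, after finitely many steps, exactly into the stable and unstable directions of $\bar A_t$ over $\bar Q$, and is therefore uniformly contracted/expanded. Only after this is the cocycle extended from $\Lambda$ to $TM$ by a partition of unity. The price is a more involved construction; the gain is that the argument works for the given $\varepsilon$ and the given $\bar Q$, with no circular smallness assumption.
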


\begin{proof}
For all $t\in [0,1]$, denote by $E^u_t$ and $E^s_t$ the stable and unstable directions of the hyperbolic point $\bar Q$
 for the cocycle $\bar A_t$. These directions vary continuously with $t$. 
Hence given any $\epsilon>0$ there exists a sequence $0=t_0<....<t_N=1$
 of times such that, for all $0\leq n < N$, there is a path of linear maps $\theta_{n,t}\in GL(d,\RR)$, with $\theta_{n,t_n}=Id$, and for all $t_n\leq t\leq t_{n+1}$:
\begin{itemize}
\item $\theta_{n,t}$ is $\epsilon$-close to identity,
\item $\theta_{n,t}(E^u_{t_n})=E^u_t$ and $\theta_{n,t}(E^s_{t_n})=E^s_t$.
\end{itemize}
Consider
$n_0\in \NN$  such that $f^{\pm n}(X)\in W$, for all $n\geq n_0$. 
First, we define the cocycle $\cA^{[2]}_{t}$ over the segment of orbit $\{f^n(X)\}_{n \ge 0}$  and 
for all $t\in[0,1]$. We  denote by $\cB_{n,t}$ the linear map corresponding to the cocycle $\cA^{[2]}_{t}$ over the point $f^n(X)$. For all $t_n\leq t\leq t_{n+1}$, define $\cB_{n,t}$ as follows:
\begin{itemize}
\item$\cB_{k,t}$ coincides with $\cA_{1}^{[1]}$ at $f^n(X)$, if $0\leq k < n_0$,
\item$\cB_{n_0+k,t}=\bar A_{t_k}\circ \theta_{k,t_{k+1}}$, if $k<n$,
\item$\cB_{n_0+n,t}=\bar A_t\circ\theta_{n,t}$,
\item$\cB_{n_0+k,t}=\bar A_t$, if $k>n$.
\end{itemize}
Recall that the set $\Lambda$ is hyperbolic for $\cA^{[1]}_1$.  Let $E^s_X$ and $E^u_X$ be the stable and unstable directions at $X$ for the cocycle $\cA^{[1]}_1$. 
By construction, for all $k\geq n$, the composition $\cB_{k,t}\circ ...\circ \cB_{0,t}$ maps $E^s_X$ (resp. $E^u_X$) into a 
direction corresponding to the stable (resp. unstable) direction of  $\bar A_t$. 

We define $\cB_{n,t}$ symmetrically for the backward orbit $\{f^n(X)\}_{n \le 0}$ of $X$.

Let $\cA_{t,\Lambda}^{[2]}$ be the linear cocycle on $T_{\Lambda}M$ given by the linear 
maps $\cB_{n,t}$ over the orbit of $X$ and by the matrix $\bar A_t$ over the point $\bar Q$. 
Then the bundles $E^s_X$ and $E^u_X$ are uniformly contracted and uniformly expanded, 
respectively, by positive iterations of $\cA_{t,\Lambda}^{[2]}$, and conversely by negative iterations. 
Hence, the orbits of the bundles $E^s_X$ and $E^u_X$ provide a hyperbolic splitting for $\cA_{t, \Lambda}^{[2]}$ over $\Lambda$: the cocycle $\cA_{t, \Lambda}^{[2]}$ is hyperbolic.

Besides, by construction, the family $(\cA_{t,\Lambda}^{[2]})_{t\in[0,1]}$ is a path
of continuous cocycles starting at the restriction of $\cA_1^{[1]}$ to the set $\Lambda$. 
The radius of this path can be found close to the radius of $\left(\bar A_t\right)_{t\in[0,1]}$: just take $\epsilon>0$ small enough. 
Now, all we need to do is to extend the path $\cA_{t,\Lambda}^{[2]}$ of cocycles above the restriction of $f$ to $\Lambda$ to 
a small path $(\cA_t^{[2]})_{t\in [0,1]}$ of continuous cocycles above $f$ starting at $\cA_1^{[1]}$.

Note that, for all $n>n_0+N$, the matrix of $\cA_{t,\Lambda}^{[2]}$ is $\bar A_t$
at the iterate $f^{\pm n}(X)$.  So is it also at $\bar Q$. Fix a small neighborhood $U_{\bar Q}\subset M$ of $\bar Q$ and these iterates. Fix a small neighborhood $U_n$ for each other iterate $f^n(X)$. Do this such that we have a disjoint union 
$$
U=U_{\bar Q}\cup \bigcup_{-n_0-N}^{n_0+N} U_{n}.
$$ 

Let $1=\phi+\psi$ be a unit partition on $M$ such that $\phi=1$ on $\Lambda$ and $\phi=0$ outside of the set $U$. Let $\cA_t^{[2]}$ be the cocycle above $f$ whose matrix on the fiber $T_xM$ is the $(\phi(x),\psi(x))$-barycenter of the 2 following two matrices:
\begin{itemize}
\item the matrix of $\cA_1^{[1]}$ at $x$, 
\item  $\begin{cases}
 \mbox{the matrix of $\cA_{t,\Lambda}^{[2]}$ at $f^n(X)$, if $x\in U_n$},\\
 \mbox{the matrix $\bar A_t$ of $\cA_{t,\Lambda}^{[2]}$ at $\bar Q$, if $x\in U_{\bar Q}$}. 
 \end{cases}$
 \end{itemize}
The cocycle $\cA_t^{[2]}$ does restrict to $\Lambda$ as $\cA_{t,\Lambda}^{[2]}$. Choosing the neighborhood $U$ of $\Lambda$ small enough, one finds the radius of $(\cA_t^{[2]})_{t\in [0,1]}$ as close as wished to the radius of 
$(\cA_{t,\Lambda}^{[2]})_{t\in [0,1]}$, hence as close as wished to the radius of $\left(\bar A_t\right)_{t\in[0,1]}$.
This ends the proof of the claim.
\end{proof}

\medskip

\noindent \textbf{(III) The third path of cocycles $\cA_t^{[3]}$.}  
We fix paths $\cA_t^{[1]}$ and $\cA_t^{[2]}$ as in Claims~\ref{c.path1} and~\ref{c.path2}.

\begin{clai}\label{c.path3}
There is an arbitrarily small path of cocycles $(\cA_t^{[3]})_{t\in [0,1]}$ defined on $TM$ above the diffeomorphism $f$ such that: 
\begin{itemize}
\item
$\cA_{0}^{[3]}=\cA_{1}^{[2]}$, 
\item $\cA_{1}^{[3]}$ coincides with $\bar A_{1}$ at $\bar Q$,
\item $\cA_{1}^{[3]}$ admits, over the set $\Lambda$, a dominated splitting of the form 
$$
T_{\Lambda}M=E\oplus E^{j,j+1}\oplus F
$$ 
such that $E$ has dimension $j-1$, and $E^{j,j+1}$ has dimension $2$,
\item for all $t\in[0,1]$, the cocycle $\cA_t^{[3]}$ is hyperbolic over the set $\Lambda$.
\end{itemize}
\end{clai}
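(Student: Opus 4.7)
The plan is to leverage the fact that at the fixed point $\bar Q$ the linear map $\bar A_1$ already has the desired algebraic structure: by Claim~\ref{c.path2} together with item~(6) of Lemma~\ref{l.bobo} and Remark~\ref{r.bobo}, its eigenvalues are real and satisfy $|\la_m|\neq |\la_j|=|\la_{j+1}|$ for all $m\ne j,j+1$. The (generalized) eigenspaces therefore provide a direct sum decomposition $T_{\bar Q}M=E_{\bar Q}\oplus E^{j,j+1}_{\bar Q}\oplus F_{\bar Q}$ of dimensions $j-1$, $2$ and $d-j-1$, which is $\bar A_1$-invariant and $k$-dominated for some large $k$, thanks to the strict spectral gaps at indices $j-1$ and $j+1$. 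I would propagate this splitting along the orbit of the homoclinic point $X$ and realize it as an invariant dominated splitting of a cocycle $\cA_1^{[3]}$ obtained from $\cA_1^{[2]}$ by an arbitrarily small perturbation that leaves the fiber at $\bar Q$ untouched.

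By Claim~\ref{c.path2}, $\cA_1^{[2]}$ coincides with $\bar A_1$ at $\bar Q$ and, in the chart around $\bar Q$, at every iterate $f^n(X)$ with $|n|>n_0+N$. I first define candidate bundles $E_x,E^{j,j+1}_x,F_x$ at each $x\in\Lambda$: at $\bar Q$ and at each such far iterate, I take the transported eigenspaces of $\bar A_1$; at each of the finitely many transition iterates $x=f^n(X)$ with $|n|\le n_0+N$, I propagate $E$ and $E^{j,j+1}$ forward from the past boundary under $\cA_1^{[2]}$, and propagate $F$ backward from the future boundary under $(\cA_1^{[2]})^{-1}$. This yields three transverse subspaces of the correct dimensions at every $x\in\Lambda$; they are automatically $\cA_1^{[2]}$-invariant except possibly at a single matching iterate $x_{0}$, where the forward image of $E_{x_{0}}\oplus E^{j,j+1}_{x_{0}}$ under $\cA_1^{[2]}(x_{0})$ need not coincide with the backward-propagated bundle at $f(x_{0})$.

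The key step, which I expect to be the main technical obstacle, is to correct the cocycle at $x_{0}$ (and only there) by composition on the left with a small linear automorphism $\theta$ of $T_{f(x_{0})}M$ that identifies the two triples of subspaces. The size of $\theta$ measures how far the propagated bundles are from being matched; by choosing the neighborhood $W$ of Claim~\ref{c.path1} small enough and the transition window $n_0+N$ large enough, the boundary iterates $f^{\pm (n_0+N)}(X)$ lie arbitrarily close to $\bar Q$ in the chart, so both the forward and backward propagations remain arbitrarily close to the $\bar A_1$-eigenspaces throughout the transition, and $\theta$ can be made arbitrarily close to the identity. Extending this correction off $\Lambda$ via a bump function supported in a small neighborhood of $x_{0}$ disjoint from $\bar Q$ and from the far iterates of $X$, and linearly interpolating on each fiber with $\cA_1^{[2]}$, yields the desired path $(\cA_t^{[3]})_{t\in[0,1]}$.

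It remains to verify the four conclusions. By construction $\cA_0^{[3]}=\cA_1^{[2]}$, and no modification is made at $\bar Q$, so $\cA_1^{[3]}$ still equals $\bar A_1$ there. The splitting $E\oplus E^{j,j+1}\oplus F$ is $\cA_1^{[3]}$-invariant on $\Lambda$ by the choice of $\theta$; its domination is immediate at $\bar Q$ from the $k$-domination of $\bar A_1$, and extends to every $x\in\Lambda$ because along the orbit of $X$ only finitely many iterates of $\cA_1^{[3]}$ differ from an iterate of $\bar A_1$ in the chart, so the exponential separation established at $\bar Q$ controls the iterated ratios up to a bounded multiplicative constant. Finally, hyperbolicity of $\cA_t^{[3]}$ over $\Lambda$ for every $t\in[0,1]$ follows from the openness of hyperbolicity for cocycles over a compact invariant set, provided the radius of the path is chosen sufficiently small.
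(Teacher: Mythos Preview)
Your approach has a genuine gap: the claim that the correction $\theta$ can be made arbitrarily small is not justified, and in fact it is generally false. The mismatch you must repair is determined by the fixed cocycle $\cA_1^{[2]}$ through the finite transition window, and no choice of $W$ or of $n_0+N$ makes that mismatch small. Concretely, the eigenspace $E_{eig}$ of $\bar A_1$ corresponding to the bottom $j-1$ multipliers is a \emph{repeller} for the forward action of $\bar A_1$ on the Grassmannian of $(j-1)$--planes; hence once the forward propagation of $E$ passes through the core iterates (where the cocycle is close to $Df$ at points of the orbit of $X$ far from $\bar Q$, and certainly not close to $\bar A_1$), subsequent applications of $\bar A_1$ drive the image \emph{away} from $E_{eig}$, not towards it. So the forward-propagated $E$ at the future boundary has no reason to be close to $E_{eig}$, and $\theta$ need not be near the identity. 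For the same reason, your assertion that the three propagated bundles are transverse at every transition iterate is unjustified: the forward-propagated $E\oplus E^{j,j+1}$ and the backward-propagated $F$ may well intersect nontrivially.

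The paper sidesteps this entirely by a different mechanism. Instead of trying to \emph{match} propagated bundles with eigenspaces, it uses the intrinsic subspaces $E_X,\tilde E_X$ (resp.\ $\tilde F_X,F_X$) at the single point $X$, defined as the unique subspaces whose forward (resp.\ backward) iterates under $\cA_1^{[2]}$ converge to $E_{\bar Q},\tilde E_{\bar Q}$ (resp.\ $\tilde F_{\bar Q},F_{\bar Q}$); these exist by the domination of $\bar A_1$ and the convergence $f^n(X)\to\bar Q$. The only obstruction to extending the dominated splitting over $\Lambda$ is then the transversality of $\tilde E_X$ with $F_X$ and of $E_X$ with $\tilde F_X$ at $X$, which is an open and dense condition on the cocycle at $X$ and is therefore achievable by an arbitrarily small perturbation supported at that single point. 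That is the key: the paper perturbs to obtain \emph{transversality} (generic, hence cheap), whereas you perturb to obtain \emph{coincidence} of two specific subspaces (a closed positive-codimension condition, hence not cheap). Once transversality holds, the invariant splitting on $\Lambda$ is obtained by iterating the splitting at $X$, and its domination follows because it converges to the dominated splitting at $\bar Q$ along both ends of the orbit of $X$.
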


\begin{proof}
Since $\cA_{1}^{[2]}$ is equal to $\bar A_1$ at $\bar Q$, recalling the properties of the exponents of $\bar A_1$, 
we have that there is a dominated splitting $T_{\bar Q}M=E\oplus E^{j,j+1}\oplus F$ with the required dimensions and such that $E^{j,j+1}$ 
is either uniformly contracted or uniformly expanded by $\cA_{1}^{[2]}$. 
We need to extend this splitting to the whole orbit of $X$.

Observe that
there are  $(j-1)$ and $(j+1)$-dimensional spaces $E_X$ and $\tilde{E}_X$ at the point $X$ such that their positive iterations by $\cA_1^{[2]}$ 
converge to $E$ and $\tilde{E}=E\oplus E^{j,j+1}$, respectively. Symmetrically, there are $(j-1)$ and  $(j+1)$-co\-di\-men\-sio\-nal spaces $\tilde{F}_X$ 
and $F_X$ whose negative iterations by $\cA_1^{[2]}$  converge to $\tilde{F}=E^{j,j+1}\oplus F$ and $F$, respectively. 

One can perturb slightly $\cA_{1}^{[2]}$ at the point $X$ in order to make $\tilde{E}_X$ transverse to $F_X$ 
and $E_X$ transverse to $\tilde{F}_X$.  Then the iterates of $\tilde{E}_X$ 
and $F_X$ by the perturbed cocycle along the orbit of $X$ extend 
the dominated splitting $\tilde{E}\oplus F$ to the whole set $\Lambda$.  
Symmetrically, we get an extension of the dominated splitting $E\oplus \tilde{F}$ to the set $\Lambda$.  
Taking  $ E^{j,j+1}=\tilde E \cap \tilde F$ we get 
the dominated splitting $E\oplus E^{j,j+1}\oplus F$ over $\Lambda$ for that perturbed cocycle.

That perturbation of $\cA_{1}^{[2]}$ may be reached by an arbitrarily small path of cocycles 
$(\cA_{t}^{[3]})_{t\in [0,1]}$ on $TM$ such that $\cA_{1}^{[3]}$ coincides with $\bar A_{1}$ at $\bar Q$. 
In particular, it can be chosen so that $\cA_t^{[3]}$ is hyperbolic over  $\Lambda$ for all $t$.
\end{proof}

\noindent \textbf{End of the proof of Lemma~\ref{l.pathpcomplex}}.
Define the path $(\cA_t)_{t\in [0,1]}$ as the concatenation of the
paths $(\cA^{[1]}_t)_{t\in [0,1]}$,  $(\cA^{[2]}_t)_{t\in [0,1]}$, and  $(\cA^{[3]}_t)_{t\in [0,1]}$ given by Claims~\ref{c.path1}, \ref{c.path2}, and \ref{c.path3}. 
By construction, the path $(\cA_t)_{t\in [0,1]}$ can be found having radius arbitrarily close to the radius of $(\bar A_t)_{t\in [0,1]}$. 
Choosing $\bar Q$ conveniently, this last radius can  be taken arbitrarily small. 

Note that the diffeomorphism $f$
has horseshoes $K$  containing the set $\Lambda$ that are arbitrarily close to $\Lambda$ for the Hausdorff distance. Choosing 
the horseshoe
$K$ Hausdorff-close enough to $\Lambda$, we have the following:
\begin{itemize}
\item for all $t\in [0,1]$,
the cocycles $\cA_t$ are continuous on $TM$ and hyperbolic over $\Lambda$. Thus, by a compactness argument on $t\in[0,1]$, the cocycles $\cA_t$ are 
also hyperbolic over $K$ for all $t\in[0,1]$.
\item 
The dominated splitting $T_{\Lambda} M=E\oplus E^{j,j+1}\oplus F$ for $\cA_1=\cA_1^{[3]}$ extends to $K$, see \cite[Appendix B]{BDV:04}.
\end{itemize}
All the conclusions of Lemma~\ref{l.pathpcomplex} are then satisfied. This ends its proof.
\end{proof}

\begin{proof}[Proof of Lemma~\ref{l.pcomplex}]
Let $(\cA_t)_{t\in [0,1]}$, $\bar Q$, $K$, and $T_{K}M=E\oplus E^{j,j+1}\oplus F$ be as in Lemma~\ref{l.pathpcomplex}.
Consider a transverse homoclinic point $X$ of $\bar Q$, $X\in W^u_\loc
(\bar Q,f) \cap K$, and an iterate of it $f^r(X)\in  W^s_\loc
(\bar Q,f)\cap K$. These two points can be chosen arbitrarily
close to $\bar Q$.

\begin{figure}[htb]

\psfrag{Q}{$\bar Q$}
\psfrag{first}{first loop}
\psfrag{second}{second loop}
\psfrag{QQ}{$Q_n$}

   \includegraphics[width=5.5cm]{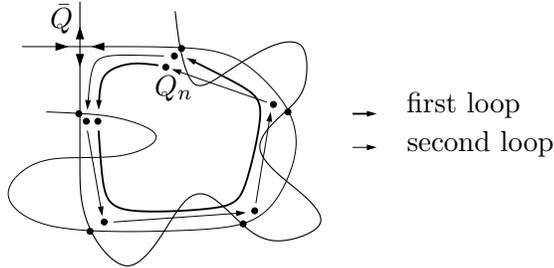}\hspace{1cm}
    \caption{Two-loops  orbits  $Q_n$}
    \label{f.orbit}
\end{figure}

We next consider periodic points $Q_n$ passing close to $X$ and having orbits with ``two loops".
For every large $n$ there is a
periodic point $Q_n\in K$ of period $2\,n+2+2\,r$ as follows (see
Figure~\ref{f.orbit}): Let $Q_n=Q_n^0$ and $Q_n^i=f^i(Q_n)$, where
\begin{equation}
\label{e.Qn}
\begin{split}
& \mbox{$\bullet\quad Q_n^0$ is close to $f^r(X)$ and $Q_n^0,\dots
,Q_n^n$ are
close to $\bar Q$,}\\
& \mbox{$\bullet\quad Q^{n+i}_n$ is close to $f^i(X)$ for all
$i=0,\dots,r$,}\\
& \mbox{$\bullet\quad Q_n^{n+r},\dots ,Q_n^{n+r+n+2}$ are close to
$\bar Q$,
and}\\
& \mbox{$\bullet\quad Q^{2\,n+r+2+i}_n$ is close to $f^i(X)$ for
all $i=0,\dots,r$.}
\end{split}
\end{equation}

\begin{clai}\label{c.orientation}
For $n$ large enough, the linear cocycle $\cA_1$ preserves the orientation of the central bundle $E^{j,j+1}$ at the periodic orbit of $Q_n$.
\end{clai}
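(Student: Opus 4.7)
\smallskip

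\noindent\textbf{Proof plan.} The strategy is to express the sign of $\det\bigl(\cA_1^{2n+2+2r}|_{E^{j,j+1}(Q_n)}\bigr)$ as a cyclic product of local contributions along the orbit of $Q_n$, and to exploit the two-loop structure of that orbit to force every non-trivial contribution to appear with even multiplicity. Concretely, I first fix pairwise disjoint neighborhoods $U$ of $\bar Q$ and $V_i$ of $f^i(X)$, for $i=0,\dots,r-1$, each small enough that the $2$-plane bundle $E^{j,j+1}$ admits a continuous trivialization there. By continuity of $\cA_1$ and of $E^{j,j+1}$ over the horseshoe $K$, the sign of $\det \cA_1|_{E^{j,j+1}}(x)$, computed with respect to the chosen trivializations at $x$ and $f(x)$, depends only on the neighborhoods containing $x$ and $f(x)$. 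This yields finitely many ``symbols'': a ``self-sign'' $\sigma$ for a step that stays inside $U$, a transition sign $s_{\mathrm{in}}$ for $U\to V_0$, a transition sign $s_i$ for each $V_i\to V_{i+1}$, and a transition sign $s_{\mathrm{out}}$ for $V_{r-1}\to U$.

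For $n$ large and $Q_n$ chosen close enough to the heteroclinic data of~(\ref{e.Qn}), each iterate $Q_n^k$ lies in one of these neighborhoods, and the total sign we want to compute is the cyclic product of the symbols associated to the successive transitions $Q_n^k\to Q_n^{k+1}$. A direct inspection of~(\ref{e.Qn}) shows that along the full period $2n+2+2r$ there are exactly $2n$ ``stay'' steps inside $U$, while each of the transition types $U\to V_0$, $V_0\to V_1$, $\dots$, $V_{r-2}\to V_{r-1}$, $V_{r-1}\to U$ occurs exactly \emph{twice}, once per loop. Consequently the cyclic product factors as
$$
\sign\bigl(\det\cA_1^{2n+2+2r}|_{E^{j,j+1}(Q_n)}\bigr)
=\sigma^{2n}\cdot\bigl(s_{\mathrm{in}}\cdot s_0\cdot s_1\cdots s_{r-2}\cdot s_{\mathrm{out}}\bigr)^{2}
=(+1)\cdot(+1)=+1,
$$
since $2n$ is even and the second factor is a square.

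The conceptual point is that each transition type appears with even multiplicity: this is exactly the reason why $Q_n$ is built as a \emph{two-loop} orbit rather than a standard single-loop shadowing orbit. A single loop would give $\sigma^{n+1}$ times an unsquared product of transition signs, and this could very well be negative---notably when $\la_j(\bar A_1)=-\la_{j+1}(\bar A_1)$, so that $\sigma=-1$. The minor technical subtlety is that $E^{j,j+1}$ is not assumed orientable along the orbit of $Q_n$, which forces one to work with local trivializations and track how signs transform between them; the parity cancellation above is precisely what is needed to handle this cleanly.
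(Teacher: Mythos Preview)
Your argument is correct and shares the paper's core idea: trivialize the rank-$2$ bundle $E^{j,j+1}$, note that determinant signs are locally constant by continuity of $\cA_1$, and then use the two-loop structure of the orbit of $Q_n$ so that every sign contribution appears with even multiplicity, forcing the total product to be $+1$.

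The paper's version is shorter because it exploits one fact you do not use: since $K$ is a Cantor set (hence totally disconnected), the bundle $E^{j,j+1}$ admits a \emph{global} continuous trivialization $E^{j,j+1}\cong K\times\RR^2$. With such a trivialization the sign of $\det(\cA_1|_{E^{j,j+1}})$ becomes a single locally constant function on $K$, and there are no inter-chart transition signs to track; one simply matches each $Q_n^k$ to the nearby reference point in $\{\bar Q\}\cup\{f^i(X):0\le i\le r\}$ and reads off the parity. Your local-chart bookkeeping with the symbols $\sigma, s_{\mathrm{in}}, s_0,\dots,s_{r-2}, s_{\mathrm{out}}$ reaches the same conclusion by a slightly longer route, and your closing remark---that a single-loop shadowing orbit would fail precisely when $\la_j(\bar A_1)=-\la_{j+1}(\bar A_1)$---makes explicit the motivation for the two-loop construction that the paper leaves to the reader.
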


\begin{proof} Let $\cA^c_1$ be the restriction of $\cA_1$ to the central bundle $E^{j,j+1}$.
Since the base $K$ of the $2$-dimensional bundle $E^{j,j+1}$ is a Cantor set, 
there is a continuous identification between $E^{j,j+1}$ and  $K\times \RR^2$. 
Thus, for any $x\in K$, the restriction $\cA^c_{1,x}$ of $\cA^c_1$ to the fiber $T_xM$ identifies to a $2\times 2$ matrix.   

By continuity, if the distance between a pair of points $x,y\in K$ is less than some $\eta>0$, then the determinants of the matrices 
$\cA^c_{1,x}$ and $\cA^c_{1,y}$ have the same sign. One then easily checks that for $n$ great enough (when ``close" in (\ref{e.Qn}) means distance less than $\eta/2$), the composition  of the matrices $\cA^c_{1,x}$ along the 
(finite) entire orbit of $Q_n$ has positive determinant.
\end{proof}

If the multipliers $\lambda_j$ and $\lambda_{j+1}$ of the first return map of $\cA_1$ at some $Q_n$ are complex, 
then all the conclusions of Lemma~\ref{l.pcomplex} are satisfied by $Q=Q_n$ and the restriction $(A_{i,t})_{t\in[0,1]}$ of the path $\cA_t$ to the orbit of $Q$.

Otherwise, by Claim~\ref{c.orientation},
these multipliers are real and have the same sign. 
Recall that the linear cocyle $\cA^c_1$ admits no dominated splitting at the point $\bar Q$. 
Since the orbits of $Q_n$ accumulate on $\bar Q$, then  with increasing $n$ the strength of domination (if any)
of the splitting of the bundle $E^{i,j}$ 
along the orbit of $Q_n$
 for the cocycle $\cA_1$ 
will decrease.
We can now apply \cite[Proposition 3.1]{BDP:03}. This result claims that, for $n$ great enough,  the
cocycle $\cA_1$ can be perturbed along the
 two-dimensional bundle $E_{j,j+1}$ and along the orbit of $Q_n$ to get a pair of non-real and
 conjugate eigenvalues. 
 
For $n$ great enough, that perturbation can be reached through a small path $(\cB_{t,n})_{t\in [0,1]}$ of cocycles over the orbit of $Q_n$. 
If the perturbation is small enough then, for all $t$,  the hyperbolicity and the domination of the splitting $E\oplus E^{i,j}\oplus F$ of $\cA_1$ over
the horseshoe
$K$ are preserved. Thus the conclusions of Lemma~\ref{l.pcomplex} are all satisfied for $Q=Q_n$ and the cocycle $(A_{i,t})_{t\in[0,1]}$ defined as the concatenation of
\begin{itemize}
\item the restriction of the path $\cA_t$ to the orbit of $Q=Q_n$, and
\item the path $\cB_{t,n}$. 
\end{itemize}
This concludes the proof of the Lemma~\ref{l.pcomplex}.
\end{proof}


\section{Formation of strong homoclinic connections}\label{s.formation}

We say that a saddle $P$ has a {\emph{strong homoclinic
intersection}} if there is a strong stable manifold of the orbit
of $P$ that intersects the unstable manifold of the orbit of $P$
or vice-versa. That is, let $i$ be the $s$-index of $P$, then either
$W^{ss}_k(P)\cap W^u(P)\ne \emptyset$ for some $k<i$ or
$W^{uu}_j(P)\cap W^s(P)\ne \emptyset$ for some $j< \dim (M)-i$
(recall the definitions of
$W^{ss}_k(P)$ and $W^{uu}_j(P)$ in Section~\ref{ss.basic}). In this
section, we see how the lack of domination of a homoclinic class
yields strong homoclinic intersections.

\begin{prop}\label{p.strongconnection}
For every $K>1$, $\varepsilon>0$ and $d\geq 2$, there exists a constant $k_0$
with the following property.

Consider $f\in\diff$, $\dim (M)=d$, and a hyperbolic periodic point $P$
of $s$-index $i$, $i\in \{2,\dots, d-1\}$, such that $H(P,f)$ is
non-trivial and has no $k_0$-dominated splitting of index $i-1$. Then
there is a periodic point $Q$
homoclinically related to $P$ and an $\varepsilon$-perturbation of $f$
along the orbit of $Q$ that is adapted to $H(P,f)$ and to property
$\fP_{ss}$ defined as follows
\begin{equation}\label{e.pss}
\fP_{ss}\eqdef
\begin{cases}
&\mbox{\textbf{(i)} $\chi_{i-1}(Q)<\chi_{i}(Q)$,}\\
&\mbox{\textbf{(ii)} $W^{ss}_{i-1}(Q)\cap W^u(Q)\ne\emptyset$}.
\end{cases}
\end{equation}
\end{prop}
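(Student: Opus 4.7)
The plan is to apply Proposition~\ref{p.complex} with $j=i-1$ and then perform a further adapted perturbation along the resulting periodic orbit that simultaneously splits the two equal central Lyapunov exponents and aligns the newly created strong stable direction with a pre-selected transverse homoclinic connection. Since $2\le i\le d-1$ and $i-1\ne i$, I first apply Proposition~\ref{p.complex} with $j=i-1$, obtaining a periodic point $Q$ homoclinically related to $P$ and an $\varepsilon/2$-perturbation $g_0$ of $f$ along the orbit of $Q$ adapted to $H(P,f)$ and to property $\fP_{i-1,i,\CC}$. For $g_0$ the multipliers $\lambda_{i-1}(Q)$ and $\lambda_i(Q)$ are non-real complex conjugates of common modulus $\rho<1$, and all other multipliers of $Q$ have moduli distinct from $\rho$; by Remark~\ref{r.complex} the period of $Q$ may be taken arbitrarily large. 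Along the orbit of $Q$ there is a $Dg_0$-invariant splitting $E^{ss}\oplus E^{c}\oplus E^{u}$ of dimensions $(i-2,\,2,\,d-i)$, and the return map of $Dg_0^{\pi(Q)}$ on $E^{c}_Q$ acts as $\rho R_\theta$ for some rotation $R_\theta$.

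Next, I fix a transverse homoclinic point $Y\in W^{s}_{\loc}(Q,g_0)\cap W^{u}(Q,g_0)$ close to $Q$ and decompose the vector $Y-Q$, in a local chart at $Q$, as $v=v^{ss}+v^{c}$ according to $E^{ss}_Q\oplus E^{c}_Q$. Using the rotation $R_\theta$ and replacing $Y$ by a forward iterate along its $g_0$-orbit if necessary, I may assume $v^{c}\ne 0$; the degenerate case $v^{c}=0$ places $Y$ in the pre-existing strong stable manifold of dimension $i-2$ and is handled by a straightforward $C^1$-stability argument. The line $\RR\cdot v^{c}\subset E^{c}_Q$ will be the target direction for the new strong stable subspace at $Q$.

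Finally, I apply the Generalized Franks' Lemma~\ref{l.gourmelon} to construct an $\varepsilon/2$-short path of hyperbolic cocycles $(A_{n,t})_{t\in[0,1]}$ along the orbit of $Q$, starting at $Dg_0$ and ending at a cocycle whose restriction to $E^{c}$ has two real, distinct, negative multipliers $\tilde\lambda_{i-1}<\tilde\lambda_i$ close to $\rho$, with the eigendirection for the more contracting $\tilde\lambda_{i-1}$ equal to $\RR\cdot v^{c}$, all other multipliers and eigendirections being essentially unchanged. Such a path exists because any $2\times 2$ real-diagonalizable matrix with prescribed real eigenvalues close to $\rho$ and a prescribed pair of transverse eigendirections lies arbitrarily close to $\rho R_\theta$ in $\mathrm{GL}(2,\RR)$, and a short continuous family interpolates through hyperbolic matrices. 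The compact sets $K^s$ and $K^u$ in Lemma~\ref{l.gourmelon} are chosen to contain both the heteroclinic disks that witness the homoclinic relation between $Q$ and $P$ and a small disk of $W^{s}_{\loc}(Q,g_0)$ through $Y$. The resulting $\varepsilon$-perturbation $g$ of $f$ then satisfies $\chi_{i-1}(Q,g)<\chi_i(Q,g)<0$ (item (i) of $\fP_{ss}$), and the new strong stable bundle of dimension $i-1$ at $Q$ is tangent to $E^{ss}_Q\oplus\RR\cdot v^{c}$, which contains the direction $v$; by the stable manifold theorem $Y\in W^{ss}_{i-1}(Q,g)\cap W^{u}(Q,g)$, verifying item (ii).

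The main obstacle is carrying out the single-perturbation alignment in the last step: splitting the central exponents in a prescribed real-diagonalizable fashion, placing the more contracting eigendirection precisely on $\RR\cdot v^{c}$, and doing so while preserving the transverse intersections that witness the homoclinic relation with $P$ and keeping the cocycle path of size $\varepsilon/2$. The rotational factor $R_\theta$ is essential: it populates a $\mathrm{GL}(2,\RR)$-neighborhood of $\rho R_\theta$ with real-diagonalizable matrices whose eigendirections cover all transverse pairs of lines in $E^{c}_Q$, so the prescribed endpoint is accessible by an arbitrarily short path of hyperbolic matrices. The homoclinic preservation is then delivered by the compact-set mechanism of Definition~\ref{d.adapted} inside Lemma~\ref{l.gourmelon}. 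Passing from the tangent alignment of the new strong stable bundle at $Q$ to the actual membership $Y\in W^{ss}_{i-1}(Q,g)$ is a standard consequence of the stable manifold theorem applied in a small neighborhood of $Q$, using that $g$ coincides with $g_0$ outside such a neighborhood.
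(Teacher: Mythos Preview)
Your opening step---applying Proposition~\ref{p.complex} with $j=i-1$ to obtain $Q$ and an $\varepsilon/2$-perturbation $g_0$ satisfying $\fP_{i-1,i,\CC}$---matches the paper exactly. The gap lies in your final step, where you pass from ``tangent alignment'' to the membership $Y\in W^{ss}_{i-1}(Q,g)$.

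The Generalized Franks' Lemma~\ref{l.gourmelon} lets you prescribe the derivative $Dg^{\pi(Q)}(Q)$ and preserve the compact sets $K^s\subset W^s_\vro(Q,g)$, $K^u\subset W^u_\vro(Q,g)$; it says nothing about the \emph{shape} of the strong stable manifold inside $W^s_\loc(Q,g)$. Knowing that the direction $v=Y-Q$ lies in the new strong stable eigenspace $E^{ss}_Q\oplus\RR\cdot v^c$ at $Q$ does \emph{not} place the point $Y$ on $W^{ss}_{i-1}(Q,g)$: the latter is only \emph{tangent} to that eigenspace at $Q$, and for $Y$ at finite distance there is no reason it should lie on this curved $(i-1)$-dimensional submanifold of $W^s_\loc(Q,g)$. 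This is not a ``standard consequence of the stable manifold theorem''; it fails generically. Moreover, $Y$ sits inside the neighborhood $V$ where $g\ne g_0$, so the coincidence outside $V$ is of no help.

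The paper closes precisely this gap by inserting an intermediate linearization step: after first making the multipliers $\la_{i-1}(Q),\la_i(Q)$ real with equal moduli (via Proposition~\ref{p.bobo} and Lemma~\ref{l.gourmelon}), it perturbs so that the restriction of the map to a neighborhood of $Q$ inside $W^s_\loc(Q)$ is \emph{linear} (Claim~\ref{cl.linear}). Once the stable dynamics is linear, the strong stable leaves inside $W^s_\loc$ are exactly affine subspaces; then a transverse homoclinic point $Y\in W^s_\loc$ is pushed by forward iteration and a further small perturbation into the $1$-dimensional invariant subspace $F_{i-1}$ of the central plane (Claim~\ref{cl.homoclinic}). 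Only after $Y$ genuinely lies in this invariant line does the paper split the two equal central exponents, so that $F_{i-1}$ becomes locally contained in $W^{ss}_{i-1}$. Your argument needs an analogous linearization (or some other device guaranteeing that $W^{ss}_{i-1}$ is straight near $Q$) before the alignment of the eigendirection can say anything about the actual location of $Y$.
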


\begin{proof}
By Proposition~\ref{p.complex} there is
a hyperbolic periodic point $Q$ that is homoclinically
related to $P$ and an $\frac \varepsilon 2$-perturbation $f'$ of $f$
along the orbit of $Q$ that is adapted to $H(P,f)$ and to property
$\fP_{i-1,i,\CC}$ (see equation \eqref{e.complex}).
This means that fixed small $\vro>0$, a neighborhood $V$ of the
orbit of $Q$, and compact sets $K^s\subset W^s_\vro(Q)$ and
$K^u\subset W^u_\vro(Q)$ disjoint from $V$, there is a diffeomorphism
$f'$ that is $\frac \varepsilon 2$-close to $f$ such that
\begin{enumerate}
\item\label{ipg1} $f'=f$ outside $V$ and along the $f$-orbit of
$Q$,
\item \label{ipg2} the points $P$ and $Q$ are homoclinically related for $f'$,
\item  \label{ipg3} $K^s\subset W^s_\vro(Q,f')$ and
$K^u \subset W^u_\vro(Q,f')$, and
\item \label{ipg4} the saddle $Q_{f'}=Q$ satisfies property $\fP_{i-1,i,\CC}$.
\end{enumerate}

By Remark~\ref{r.complex}, the period of $Q$ can be chosen arbitrarily large.
Hence Proposition~\ref{p.bobo} provides a small path of hyperbolic cocycles
joining the restriction of $Df'$ over the orbit
of $Q$ and  a cocycle with real multipliers. Applying
Lemma~\ref{l.gourmelon} to this cocycle and to $f'$ we get
an $\frac \pes 2$-perturbation $f''$ of $f'$, such that
$Q=Q_{f''}$ has a pair of real multipliers $\la_{i-1}(Q)$ and
$\la_{i}(Q)$ such that $|\la_{i-1}(Q)|=|\la_{i}(Q)|$ and
$|\la_{i}(Q)|\ne |\la_{j}(Q)|$ for all $j\ne i,i-1$,
and
 such that conditions (\ref{ipg1})--(\ref{ipg3}) also hold for
$f''$. Note that $f''$ is $\varepsilon$-close to $f$.

Consider now local coordinates around $Q$ such that
$$
W^s_\loc (Q,f'')= [-1,1]^i\times \{0^{d-i}\}\quad \mbox{and}\quad
W^u_\loc (Q,f'')= \{ 0^i\}\times [-1,1]^{d-i}.
$$
To conclude the proof of the proposition it is enough to get a
 diffeomorphism $g$ arbitrarily $C^1$-close to $f''$ and  a
small neighborhood $V_0\subset V$ of the orbit of $Q$ such that
\begin{enumerate}
\item[{\bf (a)}]
$g=f''$ outside $V_0$ and along the $g$-orbit of $Q_g=Q$,
\item[{\bf (b)}]
$W^s_\loc (Q,g)= [-1,1]^i\times \{0^{d-i}\}$ and $W^u_\loc (Q,g)=
\{ 0^i\}\times [-1,1]^{d-i}$, and
\item[{\bf (c)}]
$Q_g$ satisfies property $\fP_{ss}$.
\end{enumerate}

This will be done in several  steps. To simplify the presentation,
let us assume in the remainder steps of the proof that the period
of $Q$ is one.

\begin{clai}\label{cl.linear}
There is an arbitrarily $C^1$-small perturbation $g'$ of $f''$
satisfying (a) and (b) and such that the restriction of $g'$ to a
small neighborhood of $Q$ in $W^s_\loc(Q,g')$ is linear.
Moreover, one has that $D g'(Q)=D f''(Q)$.
\end{clai}

This claim allows us to define a two dimensional locally invariant
center-stable manifold $W^{cs}_\tau (Q,g')$ of $Q$ tangent to the
space corresponding to the $(i-1)$-th and $i$-th multipliers of
$Q$. Up to a linear change of coordinates, we have
$$
W^{cs}_\tau (Q,g')= \{ 0^{i-2}\} \times [-\tau,\tau]^2\times
\{0^{d-i}\} \text{ and }
Df''(Q)(x^s,x^u)=(A^s,A^u).
$$

\begin{proof}[Proof of Claim~\ref{cl.linear}]
 Using
the coordinates $(x^s,x^u)$ corresponding to the stable and
unstable bundles, in a neighborhood of $Q$, we write
$$
f''(x^s,x^u)=(f^s(x^s,x^u), f^u(x^s,x^u)).
$$
By the invariance of the local stable and unstable manifolds we
have that $f^u(x^s,0)=0^u$ and $f^s(0^s,x^u)=0^s$.

Next step is to linearize the restriction of $f^s$ to the local
stable manifold. Consider a  local perturbation $\tilde f^s$ of
$x^s \mapsto f^s(x^s,0^u)$  supported in a small neighborhood of
$0^s$ such that $\tilde f^s(x^s)=A^s(x^s)$ for small $x^s$.
Note that
$$
\tilde f^s(x^s)= f^s(x^s,0^u)+ h^s(x^s),
$$
where $h^s$ is $C^1$-close to the zero map and has support in a
small neighborhood of $0^s$.

Finally, we choose a bump-function $\psi(x^u)$ such that $\psi=1$
in a neighborhood of $0^u$, it is equal to $0$ outside another
small neighborhood of $0^u$, and it has small derivative. We
define $g'$ in a neighborhood of $(0^s,0^u)$ by
$$
g'(x^s,x^u)= \big( f^s(x^s,x^u) + h^s(x^s)\, \psi (x^u),
f^u(x^s,x^u) \big).
$$
By construction, the restriction of $g'$ to a small neighborhood
in the local stable manifold of $Q$ coincides with $\tilde
f^s=A^s$. Moreover, the local unstable manifold of $Q$ is also
preserved and $Dg'(Q)=Df''(Q)$. This completes the proof of the
claim.
\end{proof}

\begin{clai}\label{cl.homoclinic}
There is an arbitrarily small $C^1$-perturbation $g''$ of $g'$
satisfying Claim~\ref{cl.linear} (in particular, (a) and (b)) and such
that there is a transverse homoclinic point of $Q$ in $F_{i-1}$, where $F_{i-1}$ is
a $D g''(Q)$-invariant one-dimensional linear
 space corresponding to the Lyapunov exponent $\chi_{i-1}(Q)$.
\end{clai}

\begin{proof}
Note first that as the homoclinic class of $Q$ is non-trivial
there is a transverse homoclinic point $Y$ of $Q$ that belongs to
the local stable manifold of $Q$ where the dynamics is linear.
Next two steps are quite standard. First, by a perturbation we can
assume that $Y\not\in W^{ss}_{i-2}(Q)$.

Second, after replacing the point  $Y$ by some forward iterate of it and
after a new  perturbation, we can assume that $Y$ belongs to the
$D g'(Q)$-invariant (central) two-dimensional linear
 space $F$ corresponding to the Lyapunov exponents $\chi_{i-1}(Q)$ and
$\chi_{i}(Q)$. This follows noting that any stable non-zero vector of $Q$
that is not in the linear space corresponding to the
first $(i-2)$ Lyapunov exponents has normalized iterations
which approximate to  $F$.

There are two cases according to the restriction  of $g'$ to the
two dimensional space $F$.

\smallskip

\noindent {\emph{ Case 1: the restriction of $g'$ to   $F$ is a
homothety.}} In this case, the point $Y$ belongs to a
one-dimensional $Dg'(Q)$-invariant space and we are done.

\smallskip

\noindent {\emph{ Case 2: the restriction of $g'$ to   $F$ is
parabolic.}} In this case, the restriction of $Dg'(Q)$ to $F$ is
conjugate to a matrix of the form
$$ \left(
\begin{matrix} \la_{i} & 1
\\ 0 & \la_i
\end{matrix} \right), \quad  0<|\la_i|<1.
$$
Then the normalized iterations of any non-zero vector in $F$
accumulate to the unique one-dimensional invariant sub-space
$F_{i-1}$ of $Dg'(Q)$ in $F$. As above, after a new perturbation
we can assume that there is some iterate of $Y$ in $F_{i-1}$
ending the proof of the claim.
\end{proof}

To conclude the proof of the proposition it is now enough to make
the Lyapunov exponent $\chi_{i-1}(Q)$ smaller than $\chi_i(Q)$ so
that the space $F_{i-1}$ is now locally contained in the strong
stable manifold of $Q$ of dimension $i-1$. To perform this final
perturbation we argue as in Claim~\ref{cl.linear}.
\end{proof}


\section{Homoclinic tangencies yielding heterodimensional cycles}
\label{s.proofofmain}

In this section we prove Theorem~\ref{t.main} and its alternative
version in item (ii) of Remark~\ref{r.mainb}. For that we need the
following two propositions.

\begin{prop}\label{p.cycle}
Consider $f\in \diff$, $\dim (M)=d$, and a hyperbolic periodic point $P$ of $f$
of $s$-index $i\in \{2,\dots,d-1\}$ such that:
\begin{enumerate}
\item[(i)]
for any $C^1$-neighborhood $\cU$ of $f$ there exist a hyperbolic periodic
point $R$ homoclinically related to $P$ and perturbations of $f$
in $\cU$ along the orbit of $R$ that are adapted to $H(P,f)$ and
to property $\fP_{ss}$,
\item[(ii)]
for any $C^1$-neighborhood $\cU$ of $f$ and any $\delta>0$ there
exist a hyperbolic  periodic point $Q$ homoclinically related to $P$ and
perturbations of $f$ in $\cU$ along the orbit of $Q$ that are
adapted to $H(P,f)$ and to property $\fP_{i,\de}$,
\end{enumerate}

Then there exists a diffeomorphism $g\in \cU$ arbitrarily
$C^1$-close to $f$ having a heterodimensional cycle
associated to $P_g$ and a saddle $S_g$ of $s$-index $i-1$.
\end{prop}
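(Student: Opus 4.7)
The strategy is to combine assumptions (i) and (ii) by performing two adapted perturbations along disjoint periodic orbits, then use a final Franks-type perturbation to change the $s$-index of the weakly hyperbolic saddle, and finally recover the cycle with $P$ using the strong homoclinic intersection produced by (i).

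First, I would fix a small $\delta>0$ (to be chosen later) and a $C^1$-neighborhood $\cU_0\subset\cU$. Apply (i) to obtain a saddle $R$ homoclinically related to $P$ together with a perturbation $g_R\in\cU_0$ adapted to $H(P,f)$ and to $\fP_{ss}$, so that $W^{ss}_{i-1}(R_{g_R},g_R)\cap W^u(R_{g_R},g_R)\neq\emptyset$. Then apply (ii) to obtain a saddle $Q$ of arbitrarily large period whose orbit is disjoint from the orbit of $R$, together with a perturbation $g_Q\in\cU_0$ adapted to $H(P,f)$ and to $\fP_{i,\delta}$. Since adapted perturbations can be localized in arbitrarily small neighborhoods of their orbits, one can choose the supports of $g_R$ and $g_Q$ to be disjoint and combine them into a single diffeomorphism $g_0\in\cU_0$ for which both properties hold and $P_{g_0},Q_{g_0},R_{g_0}$ are pairwise homoclinically related. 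One should also arrange, by the extra freedom in the perturbation from (ii), that $\chi_{i-1}(Q_{g_0})<\chi_i(Q_{g_0})$ strictly, so that a strong stable manifold $W^{ss}_{i-1}(Q_{g_0},g_0)$ is well defined.

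Second, I would apply the generalized Franks' Lemma~\ref{l.gourmelon} along the orbit of $Q_{g_0}$ to flip the sign of the $i$-th Lyapunov exponent. Since $\chi_i(Q_{g_0})\in(-\delta,0)$, this requires a perturbation of order $\delta$; taking $\delta$ small enough, the final diffeomorphism $g$ lies in $\cU$. The resulting periodic orbit $S_g$ then has $s$-index $i-1$, and its new stable manifold $W^s(S_g,g)$ is the continuation of the former strong stable manifold $W^{ss}_{i-1}(Q_{g_0},g_0)$. Choosing the compact disks $K^s,K^u$ of Definition~\ref{d.adapted} inside heteroclinic connections, one can preserve the relevant homoclinic/heteroclinic links during this last perturbation, so that $R_g$ remains homoclinically related to $P_g$ and a heteroclinic connection between $S_g$ and $R_g$ survives.

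It remains to verify the heterodimensional cycle between $P_g$ and $S_g$. The intersection $W^u(S_g)\pitchfork W^s(P_g)$ is generic by the dimension count $(d-i+1)+i>d$, and is inherited from the original transverse homoclinic intersection of $Q$ with $P$, since $W^u(S_g)$ contains the continuation of the former $W^u(Q_{g_0})$. The non-generic intersection $W^s(S_g)\cap W^u(P_g)$, whose dimensions sum to only $d-1$, is where the strong homoclinic intersection enters. Inside a common horseshoe containing $R_{g_0}$ and $Q_{g_0}$ (produced by their homoclinic relation), a continuous $(i-1)$-dimensional strong stable foliation is available; by a $\lambda$-lemma argument, the leaf $W^{ss}_{i-1}(Q_{g_0},g_0)=W^s(S_g,g)$ accumulates on $W^{ss}_{i-1}(R_{g_0},g_0)$. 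The strong homoclinic point $Y\in W^{ss}_{i-1}(R_g)\cap W^u(R_g)$ therefore produces an intersection of $W^s(S_g)$ with $W^u(R_g)$, and this is transferred to $W^u(P_g)$ through the persisting homoclinic relation between $R_g$ and $P_g$.

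The main obstacle I expect is precisely this last geometric transfer: to ensure that the strong stable foliation over the common horseshoe of $R$ and $Q$ is uniformly controlled through all three stages of perturbation, that the $\lambda$-lemma can be applied with sufficient uniformity to produce a nearby intersection, and that the strong homoclinic point of $R$ survives the perturbation along $Q$. This requires careful bookkeeping of the compact stable/unstable disks preserved by Lemma~\ref{l.gourmelon} at each step, together with sufficient domination of $E^{ss}_{i-1}\oplus E^c$ along the orbit of $Q$ to guarantee that $W^{ss}_{i-1}(Q_{g_0})$ remains well defined and continues to $W^s(S_g)$.
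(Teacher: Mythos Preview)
Your first two moves---superposing the two adapted perturbations to obtain $g_0$ with $R$ satisfying $\fP_{ss}$ and $Q$ satisfying $\fP_{i,\delta}$, then pushing $\chi_i(Q)$ across zero via the generalized Franks lemma---match the paper's opening. The gap is in your transfer of the non-generic connection. You assert that the common horseshoe containing $R_{g_0}$ and $Q_{g_0}$ carries a continuous $(i-1)$-dimensional strong stable foliation and invoke a $\lambda$-lemma to make $W^{ss}_{i-1}(Q_{g_0})$ accumulate on $W^{ss}_{i-1}(R_{g_0})$. But such a sub-foliation of the stable bundle requires a dominated splitting of the form $E^{ss}_{i-1}\oplus E^c$ over the horseshoe, and nothing in the hypotheses of the proposition supplies it; the horseshoe is hyperbolic, but its stable bundle has no reason to sub-split. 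The manifolds $W^{ss}_{i-1}(R)$ and $W^{ss}_{i-1}(Q)$ are defined individually from the eigenvalue gaps of those two periodic orbits and do not interpolate into a lamination. Without the foliation there is no inclination-lemma mechanism linking them, and the intersection $W^s(S_g)\cap W^u(P_g)$ is not produced. The difficulty you flag at the end---securing domination of $E^{ss}_{i-1}\oplus E^c$ along the orbit of $Q$---is not the real issue: the foliation argument would need that domination over the \emph{entire} horseshoe, and that may simply fail.

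The paper bypasses this with an intermediate step (Claim~\ref{cl.weakandss}): instead of keeping $R$ and $Q$ separate and trying to transfer the strong connection between them, it manufactures a \emph{single} periodic point $S$ carrying both $\fP_{ss}$ and $\fP_{i,\delta}$. Inside a locally maximal transitive hyperbolic set $L$ containing $R$ and $Q$, Lemma~\ref{l.homocliniclyapunov} gives a periodic point $S\in L$ whose $i$-th exponent lies in $(-\delta,0)$ and whose orbit is $\varepsilon$-dense in $L$, hence has iterates arbitrarily close to $R$. At such an iterate the local strong stable manifold of $S$ is close to that of $R$, so a further local perturbation near the strong homoclinic point $X$ of $R$ (chosen outside $L$) creates a strong homoclinic intersection for $S$ itself. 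Once both properties live on the same orbit, a local bifurcation of $S$ (Claim~\ref{cl.getacycle}) yields a pair $\bar R$ of index $i-1$ and $\bar S$ of index $i$ still homoclinically related to $P$, with the former strong homoclinic intersection of $S$ providing---after one more local perturbation---the non-transverse connection $W^u(\bar S)\cap W^s(\bar R)$. No strong stable foliation over any horseshoe is invoked.
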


Recall that properties $\fP_{ss}$ and $\fP_{i,\de}$, see
\eqref{e.pss} and \eqref{e.pid}, mean that the saddles $R$ and $Q$
satisfy
\[
\begin{split}
& \chi_{i-1}(R)<\chi_{i}(R) \quad \mbox{and} \quad
W^{ss}_{i-1}(R)\cap W^u(R)\ne\emptyset,\\
& \chi_i(Q)\in (-\de,0).
\end{split}
\]

%

%












%







%








\begin{prop}\label{p.excfinal}
Consider $f\in \diff$, $\dim (M)=d$, having a hyperbolic periodic point $P$ of
$s$-index $i\in \{2,\dots,d-1\}$ such that:
\begin{enumerate}
\item[{{(1)}}]
$H(P,f)$ is non trivial and has no dominated splitting of index
$i$,
\item[(2')]
there is a diffeomorphism $g$ arbitrarily $C^1$-close to $f$ with
a hyperbolic periodic point $R_g$ homoclinically related to $P_g$ satisfying
property $\fP_{ss}$,
 and
\item[(3')]
for every $\de>0$ there exists a hyperbolic periodic point $Q_\de$
homoclinically related to $P$ such that
$\chi_i(Q_\de)+\chi_{i+1}(Q_\de)\ge -\de$.
\end{enumerate}
Then, there exists a diffeomorphism $g$ arbitrarily $C^1$-close to
$f$ with a heterodimensional cycle associated to a
$P$ and to a saddle of $s$-index $i-1$.
\end{prop}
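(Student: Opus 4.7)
The plan is to invoke Proposition~\ref{p.cycle}: its two hypotheses (i) and (ii) will be verified respectively from (2') and from the combination of (1) with (3') through Proposition~\ref{p.weak}. Once both are in place, Proposition~\ref{p.cycle} will directly deliver the heterodimensional cycle associated to $P_g$ and a saddle of $s$-index $i-1$.

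For hypothesis (i), I would argue that (2') is essentially its content. Condition (2') supplies, arbitrarily close to $f$, a diffeomorphism with a periodic saddle $R$ homoclinically related to $P$ whose strong stable manifold of dimension $i-1$ meets its own unstable manifold. The existence of $W^{ss}_{i-1}(R)$ forces $\chi_{i-1}(R)<\chi_i(R)$, so $R$ satisfies $\fP_{ss}$. To recast this as a perturbation of $f$ adapted to $H(P,f)$ and $\fP_{ss}$ in the sense of Definition~\ref{d.adapted}, I would realize the cocycle of the nearby diffeomorphism along the $f$-orbit of the continuation $R$ by an application of Lemma~\ref{l.gourmelon}, choosing the compact pieces $K^s,K^u$ of the invariant manifolds of $R$ so that they contain both the transverse heteroclinic points with $P$ and the strong-stable homoclinic intersection. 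This gives hypothesis (i).

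For hypothesis (ii), given any $C^1$-neighborhood $\cU$ of $f$ and any $\delta>0$, I would let $k_0$ be the constant of Proposition~\ref{p.weak} associated to $K\geq\max(\|Df\|,\|Df^{-1}\|)$, to an $\varepsilon>0$ controlling $\cU$, and to $d=\dim(M)$; pick $\delta'\in(0,\delta)$; and, using (3'), take a saddle $Q_{\delta'}$ homoclinically related to $P$ satisfying $\chi_i(Q_{\delta'})+\chi_{i+1}(Q_{\delta'})\geq-\delta'>-\delta$. Since $Q_{\delta'}$ is homoclinically related to $P$, it has $s$-index $i$ and $H(Q_{\delta'},f)=H(P,f)$; by (1) this class has no $k_0$-dominated splitting of index $i$. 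Proposition~\ref{p.weak} then applies to $Q_{\delta'}$ in the role of $P$, yielding a saddle $Q\in H(P,f)$ homoclinically related to $Q_{\delta'}$ together with an $\varepsilon$-perturbation of $f$ along the orbit of $Q$ adapted to $H(Q_{\delta'},f)$ and to property $\fP_{i,\delta}$.

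The main obstacle will be to promote this perturbation, which is adapted to $H(Q_{\delta'},f)$, to one adapted to $H(P,f)$, i.e.\ to guarantee that $P_g$ and $Q_g$ remain homoclinically related. My approach is to use the freedom in Definition~\ref{d.adapted} to choose the support $V$ of the perturbation along the orbit of $Q$ arbitrarily small and disjoint from a fixed pair of transverse heteroclinic orbits realizing the homoclinic relation between $P$ and $Q_{\delta'}$. Those intersections will then be preserved by the perturbation, and, combined with the preserved $Q_g$--$Q_{\delta',g}$ homoclinic relation provided by the adapted property, an application of the inclination lemma will yield a transverse homoclinic relation between $P_g$ and $Q_g$. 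With both hypotheses of Proposition~\ref{p.cycle} verified, its conclusion provides the required heterodimensional cycle.
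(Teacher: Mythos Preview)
Your verification of hypothesis (ii) of Proposition~\ref{p.cycle} is correct, and the promotion of the adaptation from $H(Q_{\delta'},f)$ to $H(P,f)$ via the inclination lemma is the right idea.

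The gap is in your verification of hypothesis (i). Property $\fP_{ss}$ is not a condition on the cocycle along the orbit of $R$: its clause (ii) requires a strong-stable homoclinic intersection $W^{ss}_{i-1}(R)\cap W^u(R)\neq\emptyset$, which depends on the global geometry of the invariant manifolds. Lemma~\ref{l.gourmelon} lets you realize a prescribed derivative cocycle along the $f$-orbit of $R$ while preserving compact pieces $K^s\subset W^s_\rho(R,f)$ and $K^u\subset W^u_\rho(R,f)$, but it gives no control on where the $(i-1)$-dimensional strong-stable leaf sits inside $W^s(R,g')$ after the perturbation. Your suggestion to include the strong homoclinic point in $K^s$ does not help: that point lies in $W^{ss}_{i-1}(R_g,g)$, not a priori in $W^s_\rho(R,f)$; and even if it did, the lemma would only place it in $W^s_\rho(R,g')$, not in $W^{ss}_{i-1}(R,g')$. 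So (2') does not yield an \emph{adapted} perturbation in the sense of Definition~\ref{d.adapted}, and hypothesis (i) of Proposition~\ref{p.cycle} is not established.

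The paper sidesteps this by not invoking Proposition~\ref{p.cycle} as a black box. Instead it orders the two perturbations: first apply (2') to pass to a nearby $g'$ where some $R_{g'}$ already satisfies $\fP_{ss}$ (a possibly global perturbation), observe that conditions (1) and (3') persist for $g'$, and \emph{then} apply Proposition~\ref{p.weak} to $g'$ to obtain an adapted perturbation $g_0$ supported in a small neighborhood of a suitable orbit, disjoint from the orbit of $R_{g'}$ and from its strong homoclinic intersection. One now has a single $g_0$ carrying saddles $R$ and $Q$ homoclinically related to $P_{g_0}$ that satisfy $\fP_{ss}$ and $\fP_{i,\delta}$ respectively, and the remainder of the proof of Proposition~\ref{p.cycle} (Claims~\ref{cl.weakandss} and~\ref{cl.getacycle}) is followed verbatim. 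The asymmetry---only the \emph{second} perturbation needs to be adapted/localized---is precisely what makes the weaker hypothesis (2') sufficient.
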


Note that item (1) in Proposition~\ref{p.excfinal}
corresponds exactly to the same item in Theorem~\ref{t.main},
items (2') and (3') are exactly items (2') and (3')
in Remark~\ref{r.mainb}.
Therefore Proposition~\ref{p.excfinal}
implies the conclusions in Remark~\ref{r.mainb}.

We postpone the proof of these propositions to
Sections~\ref{ss.pcycle} and \ref{ss.pexcfinal}.
Assuming these
propositions
we now prove
Theorem~\ref{t.main} and Corollary~\ref{c.main2}

\subsection{Proof of Theorem~\ref{t.main}}
\label{ss.proofoftmain}

Proposition~\ref{p.strongconnection}
and assumption (2) in the theorem imply that condition (i) in
Proposition~\ref{p.cycle} is satisfied.

Proposition~\ref{p.weak} and assumptions (1) and (3) in the
theorem imply that condition (ii) in Proposition~\ref{p.cycle}
is satisfied.

Proposition~\ref{p.cycle} now provides a diffeomorphism $g$ with a heterodimensional
cycle associated to $P_g$ and a saddle $S_g$ of $s$-index $i-1$.
By Lemma~\ref{l.bodiki}, we can assume that the diffeomorphism $g$ has
a pair of  transitive hyperbolic sets $L_g$ and $K_g$ having a robust heterodimensional cycle,
where  $L_g$ contains $P_g$ and $K_g$ contains a periodic point $R_g$ of stable index $i-1$.

We now explain how to improve the previous arguments to obtain
robust homoclinic tangencies.

Fix $\varepsilon>0$ and consider the integer $k_0$ associated to
$\varepsilon$ in Proposition \ref{p.complex}. Since $H(P,f)$ has
no dominated splittings of indices $i-1$ and $i$, there are $r>0$
and a neighborhood $\cU$ of $f$ such that for any $f'\in \cU$ and
any $f'$-invariant set $K$ having an $r$-neighborhood containing
$H(P,f)$ there is no $k_0$-dominated splitting over $K$.

We perform a first perturbation $g_0$ of $f$, $g_0\in \cU$, as
above, obtaining a robust heterodimensional cycle between two
transitive hyperbolic sets containing the saddles $P_{g_0}$ and
$R_{g_0}$. By~\cite{BC:04}, taking $g_0$ in a residual subset of
$\diff$, we can assume that $H(P,g_{0})$ and $H(R_{g_0},g_0)$
coincide. In particular, these  homoclinic classes are non-trivial
and their $r$-neighborhoods contain $H(P,g)$. Thus for every
diffeomorphism $h$ close to $g_0$, the homoclinic classes
$H(P_h,h)$ and $H(R_h,h)$ have no $k_0$-dominated splittings of
indices $i-1$ and $i$.

We now consider another small perturbation $g_1\in \cU$ of $g_0$
such that the saddles $P_{g_1}$ and $R_{g_1}$ have a
heterodimensional cycle.

Since the classes $H(P_{g_1},g_{1})$ and $H(R_{g_1},g_{1})$ have
no $k_0$-dominated splittings of indices $i-1$ and $i$,
Proposition \ref{p.complex} provides a pair of hyperbolic periodic
points $Q_{g_1}$ and $T_{g_1}$ homoclinically related to $P_{g_1}$
and $R_{g_1}$, respectively, and two ``independent" local
$\varepsilon$-perturbations $g_{Q}$ and $g_{T}$ of $g_1$ such that
\begin{itemize}
 \item
the   supports of $g_{Q}$ and $g_{T}$ are disjoint and contained
in arbitrarily small neighborhoods of the orbits of $Q_{g_1}$ and
$T_{g_1}$, respectively,
\item
these perturbations preserve the heterodimensional cycle
associated to $P_{g_1}$ and $T_{g_1}$,
\item
the $i$-th and $(i-1)$-th multipliers of $Q_{g_1}$  for $g_{Q}$
and of  $T_{g_1}$ for $g_{T}$ are non-real.
\end{itemize}
As the supports of the perturbations $g_{Q}$ and $g_{T}$ are
disjoint, combining these perturbations  one gets a diffeomorphism
$g_2$ such that $P_{g_2}$ and $T_{g_2}$ have a heterodimensional
cycle and the classes $H(P_{g_2},g_2)$ and $H(T_{g_2},g_2)$
robustly have no dominated splittings of indices $i$ and $i-1$,
respectively.

By Lemma~\ref{l.bodiki}, one can perform a last perturbation $g$
so that $P_g\in K_g$ and $T_g\in L_g$ where $K_g$ and $L_g$ are
transitive hyperbolic  sets having a robust heterodimensional
cycle. Finally, we choose $g$ in the residual subset of $\diff$
in \cite[Theorem 1]{BD:pre}, this choice implies that  the sets
$K_g$ and $L_g$ have robust homoclinic tangencies. \hfil \qed

\subsection{Proof of Corollary~\ref{c.main2}}
We first recall that there is a residual subset $\cR$ of $\diff$
such that every homoclinic class $H(P,f)$ of $f\in\cR$ that does
not have any dominated splitting is the Hausdorff limit of sinks
or sources, see \cite[Corollary 0.3]{BDP:03}. More precisely, if
there is a saddle $Q$ homoclinically related to $P$ whose Jacobian
is less (resp. greater) than one then the class $H(P,f)$ is the
Hausdorff limit of sinks (resp. sources), see the proof of
 \cite[Proposition 2.6]{BDP:03}. Thus to prove the corollary it
is enough to consider a saddle $P$ of $s$-index two whose
homoclinic class $H(P,f)$ does not have any dominated splitting
and such that every saddle $Q$ homoclinically related to $P$ has
Jacobian greater than one. By the previous comments, the class
$H(P,f)$ is limit of sources.

Observe that the assumption on the Jacobians implies that
$\chi_2(Q)+\chi_3(Q)>0$. Thus the homoclinic class satisfies all
hypotheses in Theorem~\ref{t.main}. Hence there is a perturbation
$g$ of $f$ with a robust heterodimensional cycle associated to a
hyperbolic set containing $Q_g$ and $P_g$. The corollary now
follows from standard genericity arguments. \hfil \qed

\subsection{Sectional dissipativiness. Corollary~\ref{c.excnodominated}}
Let $P$ be a hyperbolic saddle of a diffeomorphism $f$ such that:
\begin{itemize}
\item
for  every diffeomorphism $g$ that is $C^1$-close to $f$ there is
no heterodimensional cycle associated to $P_g$, and
\item
let $i$ the stable index of $P$, then the homoclinic class
$H(P,f)$ has no dominated splitting of index $i$.
\end{itemize}
 Under these hypotheses we consider a dominated splitting
with three bundles (see Definition~\ref{d.severalbundles})
$$
T_{H(P)}M=E_1\oplus E^c\oplus E_3
$$
 such that $\dim(E_1)<i<\dim(E_1\oplus E^c)$ and $E^c$ does not
 admit
 any dominated splitting.  Note that the bundles $E_1$ and $E_3$
 may
 be empty and that $\dim (E^c)\ge 2$.

We now see some properties of the homoclinic class $H(P,f)$ that
follow  from Theorem~\ref{t.main} and will imply the corollary.
There are the following cases:

\begin{itemize}
\item $\dim (E^c)=2$: Assume that $E^c$ is sectionally dissipative. Then, by Theorem~\ref{t.main} and
Remark~\ref{r.mainb}, for every diffeomorphism $g$ $C^1$-close to
$f$ and every saddle $R_g$ homoclinically related to $P_g$  the
unstable and strong stable manifolds of $R_g$ have empty
intersection. There is similar statement when $E^c$ is sectionally
dissipative for $f^{-1}$.
\item
$\dim(E^c)\ge 3$. Since the diffeomorphisms close to $f$ cannot
have heterodimensional cycles, Corollary~\ref{c.main}
implies that 
$$
{\textbf{(I)\quad}} i=\dim(E_1\oplus E^c)-1 \qquad \mbox{or}
\qquad {\textbf{(II)\quad}} i= \dim (E_1)+1.
$$
In case (I), by Theorem~\ref{t.main}, the bundle $E^c$ is
uniformly sectionally dissipative.  Moreover, by
Remark~\ref{r.mainb}, for every diffeomorphism $g$ $C^1$-close to
$f$ and every saddle $R_g$ homoclinically related to $P_g$ the
unstable and strong stable manifolds of $R_g$ have empty
intersection. There is similar statement for case (II) considering
$f^{-1}$.
\end{itemize}
The previous discussion implies Corollary~\ref{c.excnodominated}.
\hfill $\square$

\subsection{Proof of Proposition~\ref{p.cycle}}
\label{ss.pcycle}

We fix a small neighborhood $\cU$ of $f$ and small $\delta>0$.
Conditions (i) and (ii) in the proposition provide saddles $R$ and
$Q$ having different orbits and local perturbations $g_R$ and
$g_Q$ throughout these orbits as follows. Consider small
neighborhoods $V_R$ and $V_Q$ of the orbits of $R$ and $Q$ having
disjoint closures. Then there are perturbations $g_R$ and $g_Q$ of
$f$ in $\cU$ whose supports are contained in $V_R$ and $V_Q$ such
that $R$ satisfies $\fP_{ss}$ for $g_R$ and $Q$ satisfies
$\fP_{i,\de}$ for $g_Q$.

As the supports of these perturbations are disjoint, we can
consider a perturbation $g_0$ of $f$ which coincides with $g_R$ in
$V_R$, with $g_Q$ in $V_Q$, and with $f$
outside these neighborhoods. Note that if $\cU$ is small then the
diffeomorphism $g_0$ can be chosen arbitrarily close to $f$.
Moreover, since we are considering adapted perturbations, we have
that the saddles $R$ and $Q$ are all homoclinically
related to $P$ (recall the proof of Corollary~\ref{c.1}).

The proposition is an immediate consequence of the following two
claims. We observe that there are similar results  in
\cite{PPV:05} and \cite[section 2.5]{CP:prep}, so we just sketch
their proofs.

\begin{clai} \label{cl.weakandss}
There is a perturbation $g_1$ of $g_0$ having a hyperbolic
periodic point $S_{g_1}$ that is homoclinically related to
$P_{g_1}$ and that satisfies simultaneously properties $\fP_{ss}$
and $\fP_{i,\de}$.
\end{clai}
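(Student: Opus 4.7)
The plan is to construct $S_{g_1}$ by a shadowing/Birkhoff--Smale argument that combines long segments of the orbits of $R$ and $Q$ inside a common transitive hyperbolic set, and then to perform a small adapted perturbation (in the sense of Section~\ref{s.adaptedfranks}) to realize the strong homoclinic intersection without destroying the Lyapunov data or the homoclinic relation with $P_{g_1}$.

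First, since $R$ and $Q$ are both homoclinically related to $P$ for $g_0$, they are homoclinically related to each other, so a standard Birkhoff--Smale construction yields a transitive hyperbolic set $\Lambda$ of $g_0$ containing $P$, $Q$, $R$ and associated homoclinic orbits. Inside $\Lambda$, for every pair of large integers $n,m$, specification/shadowing produces a periodic point $S_{n,m}$ whose orbit shadows $n$ full periods of $R$ and $m$ full periods of $Q$, joined by connecting arcs of bounded length. The Lyapunov exponents of $S_{n,m}$ are then close, with error going to $0$ as $n,m\to\infty$, to the convex combination
\[
\chi_j(S_{n,m})\approx \frac{n\,\pi(R)\,\chi_j(R)+m\,\pi(Q)\,\chi_j(Q)}{n\,\pi(R)+m\,\pi(Q)}.
\]

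Second, I choose the proportions so as to meet the required inequalities. Since $\chi_i(Q)\in(-\delta,0)$ by $\fP_{i,\de}$, fixing $n$ bounded and letting $m$ be sufficiently large drives $\chi_i(S_{n,m})$ arbitrarily close to $\chi_i(Q)$, hence into $(-\delta,0)$; this gives $\fP_{i,\de}$ for $S_{n,m}$. To obtain the spectral gap $\chi_{i-1}(S_{n,m})<\chi_i(S_{n,m})$ needed to define $W^{ss}_{i-1}(S_{n,m})$, I may first replace $Q$ by an adapted perturbation (again homoclinically related to $P$) for which $\chi_{i-1}(Q)<\chi_i(Q)$; combined with the analogous gap for $R$ coming from $\fP_{ss}$, the weighted averages keep this gap for $S_{n,m}$.

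Third, for the strong homoclinic intersection, the key point is that along the $R$-shadowing blocks, the stable bundle of $S_{n,m}$ admits a dominated subsplitting of index $i-1$ which is $C^0$-close to the one giving $W^{ss}_{i-1}(R)$; consequently the local manifold $W^{ss}_{i-1}(S_{n,m})$ shadows $W^{ss}_{i-1}(R)$ near those blocks. A point $Y\in W^{ss}_{i-1}(R)\cap W^u(R)$ then has a nearby shadowing point $Y'\in W^u(S_{n,m})$ lying arbitrarily close to $W^{ss}_{i-1}(S_{n,m})$. A small adapted perturbation along the orbit of $S_{n,m}$, built exactly as in the final step of the proof of Proposition~\ref{p.strongconnection} by appealing to Lemma~\ref{l.gourmelon} with appropriately chosen compact sets $K^s,K^u$, moves $Y'$ into $W^{ss}_{i-1}(S_{n,m})$ while preserving the previously fixed transverse heteroclinic intersections between $S_{n,m}$ and $P_{g_1}$ and changing the Lyapunov exponents only by an arbitrarily small amount.

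The main obstacle is the third step: guaranteeing that the strong homoclinic intersection can be installed at $S_{n,m}$ without breaking the homoclinic relation with $P_{g_1}$ and without pushing $\chi_i(S_{n,m})$ outside $(-\delta,0)$. This is exactly the kind of control provided by the adapted-perturbation framework of Section~\ref{s.adaptedfranks}: the support of the perturbation is confined to a small neighborhood of the orbit of $S_{n,m}$, disjoint from the prescribed compact pieces of stable and unstable manifolds witnessing the homoclinic relation with $P_{g_1}$, so those relations persist; and the $C^1$-smallness of the perturbation together with the closeness of the new cocycle to the original one keeps both $\fP_{ss}$ and $\fP_{i,\de}$ satisfied simultaneously.
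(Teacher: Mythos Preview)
Your overall strategy matches the paper's: produce $S$ inside a transitive hyperbolic set $L$ containing both $R$ and $Q$ so that (i) its Lyapunov exponents inherit $\fP_{i,\de}$ from $Q$ and (ii) some iterates lie close enough to $R$ that the local strong stable and unstable manifolds of $S$ nearly reproduce the strong homoclinic configuration of $R$; then perturb to make the near-intersection an actual one. The paper reaches (i)--(ii) via Lemma~\ref{l.homocliniclyapunov} rather than through your explicit $(n,m)$-shadowing with weighted exponents, but that is a cosmetic difference.

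The gap is in your third step. An ``adapted perturbation along the orbit of $S_{n,m}$ via Lemma~\ref{l.gourmelon}'' is the wrong tool here: that lemma realizes a prescribed path of derivative cocycles along the periodic orbit while preserving chosen compact pieces $K^s\subset W^s$ and $K^u\subset W^u$; it gives no mechanism to steer a specific unstable-manifold point $Y'$ into the submanifold $W^{ss}_{i-1}(S)\subset W^s(S)$. Your appeal to the final step of Proposition~\ref{p.strongconnection} is also off target: there the gap $\chi_{i-1}<\chi_i$ is \emph{created} by the last cocycle perturbation, which simultaneously turns a homoclinic point already placed in the invariant line $F_{i-1}$ into a strong homoclinic one---a different situation from yours, where the strong stable manifold already exists and you merely need two nearby leaves to meet. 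What is actually needed, and what the paper does, is a plain local perturbation supported in a small neighborhood of the strong homoclinic point $X$ of $R$, having first chosen $L$ so that the orbit of $X$ is disjoint from it. Since $(g_0')^{-N_1}\big(W^{ss}_{i-1,\loc}(S)\big)$ and $(g_0')^{N_2}\big(W^u_\loc(S)\big)$ both pass close to $X$, a small push near $X$ forces them to intersect. This perturbation does not touch the orbit of $S$ at all, so $\fP_{i,\de}$ and the homoclinic relation with $P$ are preserved exactly, with no delicate balancing of cocycle data required.
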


\begin{clai} \label{cl.getacycle}
The dynamical configuration in Claim~\ref{cl.weakandss} yields
diffeomorphisms $g$ having heterodimensional cycles associated to
a periodic orbit homoclinically related to $P_g$ and to a saddle
of index $i-1$. Moreover, if $\de>0$ is small and $g_1$ is close
enough to $f$ then $g\in \cU$.
\end{clai}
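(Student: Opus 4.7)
The starting point is the saddle $S=S_{g_1}$ provided by Claim~\ref{cl.weakandss}, of $s$-index $i$, homoclinically related to $P_{g_1}$, and simultaneously satisfying $\fP_{ss}$ and $\fP_{i,\de}$. In particular $\chi_i(S)\in(-\de,0)$ and there is a point $Z\in W^{ss}_{i-1}(S,g_1)\cap W^u(S,g_1)$. The overall strategy is to push the weak Lyapunov exponent $\chi_i(S)$ across zero by a $C^1$-perturbation of size $O(\de)$ supported in a small neighborhood of the orbit of $S$; the saddle $S$ is then converted into a hyperbolic saddle $S_g$ of $s$-index $i-1$ whose new stable manifold $W^s(S_g)$ is $C^1$-close to the previous strong stable manifold $W^{ss}_{i-1}(S,g_1)$, and whose new unstable manifold $W^u(S_g)$ is a $C^1$-small extension of the previous $W^u(S,g_1)$.

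Before performing this perturbation, I first upgrade the strong homoclinic intersection for $S$ into a strong heteroclinic intersection with $P$. Since $S$ is homoclinically related to $P_{g_1}$, the $\la$-lemma gives that iterates of $W^u(P_{g_1})$ accumulate on $W^u(S,g_1)$, and in particular approach the transverse intersection structure near $Z$. A small $C^1$-perturbation supported away from the orbit of $S$ and of $P$ thus produces a point $Z'\in W^{ss}_{i-1}(S)\cap W^u(P)$. By the adapted nature of the perturbations already used to obtain $S$, we may assume that after this additional small perturbation the points $S$ and $P$ are still homoclinically related and $S$ still satisfies $\fP_{ss}$ and $\fP_{i,\de}$; in particular there remains a transverse intersection point in $W^u(S)\pitchfork W^s(P)$, which I record together with $Z'$ in compact subsets of $W^{ss}_{i-1}(S)$ and $W^u(S)$ respectively.

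Applying then the generalized Franks lemma (Lemma~\ref{l.gourmelon}) along the orbit of $S$, with these prescribed compact subsets of the invariant manifolds, I realize a path of hyperbolic cocycles that moves $\chi_i(S)$ continuously from its initial value in $(-\de,0)$ to a small positive number while fixing all the other Lyapunov exponents, with total $C^1$-size $O(\de)$. At the endpoint one obtains a diffeomorphism $g$ for which $S_g$ has $s$-index $i-1$, with $W^s(S_g)$ a $C^1$-perturbation of the old $W^{ss}_{i-1}(S)$ and $W^u(S_g)$ a $C^1$-perturbation of the old $W^u(S)$ with one extra dimension. The transverse heteroclinic intersection $W^u(S)\pitchfork W^s(P)$ is preserved as a transverse intersection $W^u(S_g)\pitchfork W^s(P_g)$ (dimensions now adding up to $d+1$), and the intersection point $Z'$ persists as a (quasi-transverse) intersection point in $W^s(S_g)\cap W^u(P_g)$. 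Together these give the required heterodimensional cycle between $P_g$, which is homoclinically related to itself, and the saddle $S_g$ of $s$-index $i-1$. For $\de$ small enough and $g_1$ chosen close enough to $f$ (which Claim~\ref{cl.weakandss} allows), the total perturbation is within any prescribed $\cU$.

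The main obstacle I expect to handle carefully is the compatibility between the two perturbation steps: the first perturbation produces a strong heteroclinic intersection inside the strong stable manifold $W^{ss}_{i-1}(S)$, which has high codimension in $W^s(S)$, and the subsequent cocycle perturbation along the orbit of $S$ could in principle destroy this intersection. The adapted form of Franks' lemma must therefore be invoked with a compact subset of $W^{ss}_{i-1}(S)$ containing $Z'$ held fixed throughout the deformation, which requires that the perturbation preserves the strong stable foliation of dimension $i-1$ at $S$; this is possible since the path of cocycles given by Lemma~\ref{l.bobo} keeps the first $i-1$ Lyapunov exponents unchanged and the relevant dominated splitting inside the stable bundle along the orbit of $S$.
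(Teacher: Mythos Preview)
There is a genuine gap in your second step. You invoke Lemma~\ref{l.gourmelon} with a path of cocycles that moves $\chi_i(S)$ from a value in $(-\de,0)$ to a small positive number; but any such path has a parameter where the $i$-th multiplier lies on the unit circle, so the product $B_t$ fails to be hyperbolic there, and hypothesis~(\ref{ifg3}) of Lemma~\ref{l.gourmelon} is violated. The generalized Franks' lemma, as stated and used in this paper, simply does not allow the $s$-index of the periodic point to change along the path while controlling the invariant manifolds. Your last paragraph anticipates a related difficulty (preserving a compact set inside $W^{ss}_{i-1}(S)$ rather than $W^s(S)$), but the cure you propose does not address the non-hyperbolicity of the intermediate cocycles, and Lemma~\ref{l.gourmelon} only guarantees preservation of compacta in the \emph{full} stable and unstable manifolds, not in strong stable leaves.

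The paper sidesteps both problems by not changing the index of $S$ at all. Instead it exploits the weakness $\chi_i(S)\in(-\de,0)$ to perform a local saddle-node type bifurcation of $S$, producing \emph{two} nearby periodic points $\bar S_{g_2}$ (still of index $i$ and still homoclinically related to $P$) and $\bar R_{g_2}$ (of index $i-1$). The transverse direction $W^u(\bar R)\pitchfork W^s(\bar S)$ comes for free from the bifurcation; the other direction $W^u(\bar S)\cap W^s(\bar R)$ is obtained from the strong homoclinic intersection of the original $S$, since $W^s_\loc(\bar R)$ is $C^1$-close to $W^{ss}_{i-1}(S)$ and $W^u_\loc(\bar S)$ is $C^1$-close to $W^u(S)$, and a final local perturbation near the former strong homoclinic point closes the cycle. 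In this way no Franks-type lemma through a non-hyperbolic cocycle is ever needed.
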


\begin{proof}[Sketch of the proof of Claim~\ref{cl.weakandss}]
The idea of the proof  of the claim is the following. First,
consider a strong homoclinic intersection $X$ of the orbit of $R$.
Then there are $N_1$ and $N_2>0$ such that
$$
X\in g^{-N_1}_0\big( W^{ss}_\loc(R, g_0) \big) \cap g^{N_2}_0
\big(W^{u}_\loc (R, g_0)).
$$

Observe also that, since $R$ and $Q$ are homoclinically related,
there is a locally maximal transitive hyperbolic set $L$ of $g_0$
containing $R$ and $Q$. Moreover, we can assume (and we do) that
$L$ is disjoint from the orbit of the point $X$.

We consider a ``generic" perturbation $g_0'$ of $g_0$ given by
Lemma~\ref{l.homocliniclyapunov} obtaining a periodic point $S_{g_0'}\in
L_{g_0'}$ which satisfies $\fP_{i,\de}$ and having iterates
arbitrarily close to $R_{g_0'}$. This implies that
$$
(g_0')^{-N_1} \big( W^{ss}_\loc({S_{g_0'} , g_0'})\big) \quad
\mbox{and} \quad (g_0')^{N_2} \big(W^{u}_\loc(S_{g_0'},
g_0')\big)
$$
have points that are close to $X$. Since $X$ is disjoint from the
orbit of $S_{g_0'}$ we can perform  a local perturbation $g_1$ of
$g_0'$ in a small in a neighborhood of $X$ having a strong
homoclinic intersection associated to $S_{g_1}$. This completes
the sketch of the proof of the claim.
\end{proof}

\begin{proof}[Sketch of the proof of Claim~\ref{cl.getacycle}]
By a small local perturbation $g_2$ of $g_1$ bifurcating the point
$S_{g_1}$ we get two points $\bar R_{g_2}$ and $\bar S_{g_2}$ of
indices $i-1$ and $i$ such that
\begin{itemize}
\item
 $\bar S_{g_2}$ is still
homoclinically related to $P_{g_2}$,
\item
the manifolds $W^u(\bar R_{g_2},g_2)$ and $W^s(\bar S_{g_2},g_2)$
have a transverse intersection point $Y$, and \item the $N_2$-th
iterate  of
 $W^u_\loc(\bar S_{g_2},g_2)$ and the $N_1$-th iterate by $g_2^{-1}$ of
 $W^s_\loc(\bar R_{g_2},g_2)$ have points that are close.
\end{itemize}

As
 above,  there is a small  local perturbation $g$ of $g_2$ such that
the intersection $W^u(\bar S_{g},g)\cap W^s(\bar R_{g},g)$ is
non-empty. The support of this perturbation is disjoint
 from the orbits of the saddles $\bar S_{g_2}$ and $\bar R_{g_2}$,
 the transverse intersection
point $Y$, and a pair of transverse heteroclinic points between
$\bar S_{g_2}$ and $P_{g_2}$. As a consequence, the diffeomorphism
$g$ has a heterodimensional cycle associated to $\bar S_g$ and
$\bar R_g$ and $\bar S_g$ is homoclinically related to $P_g$. This
completes the proof of the claim.
\end{proof}

This completes the proof of Proposition~\ref{p.cycle}. \hfill \qed

\subsection{Proof of Proposition~\ref{p.excfinal}}
\label{ss.pexcfinal} Consider any small $\varepsilon,\delta>0$.
The proof of this proposition follows exactly as the one of
Proposition~\ref{p.cycle}  after finding an
$\varepsilon$-perturbation $g_0$ of $f$ and two saddles $R$ and
$Q$ of $g_0$ that are homoclinically related to $P_{g_0}$ and
satisfy properties $\fP_{ss}$ and $\fP_{i,\de}$, respectively.

Let $k_0\geq 1$ be an integer associated to $\varepsilon$ given by
Proposition~\ref{p.weak}. Fix a point $Q=Q_\de$ as in item (3) in
the proposition. For an arbitrarily small perturbation $g'$  given
by item (2') consider the point $R_{g'}$ homoclinically related to
$P_{g'}$ and satisfying $\fP_{ss}$. Note that $Q_{g'}$ also
satisfies item (3). Moreover, the homoclinic class $H(P_{g'},g')$
does not have any dominated splitting of index $i$.

We now apply Proposition~\ref{p.weak} to get a perturbation $g_0$
of $g'$ supported on an arbitrarily small neighborhood of the
orbit of $Q_{g'}$ and such that property $\fP_{i,\de}$ holds for
$Q_{g_0}$ and $g_0$.
 Therefore all conditions in the  proposition are satisfied.
 \hfil \qed


\section{Viral classes}\label{s.viral}

In this section we prove  Theorem~\ref{t.bviral}. We begin with a
definition.

\begin{defi}[Property $\fV''$] \label{d.propertyV}
The chain recurrence class
 $C(P,f)$ of a saddle $P$ of a diffeomorphism $f\in \diff$, $\dim (M)=d$, satisfies
\emph{Property $\fV''$} if the following conditions hold:
\begin{enumerate}
\item\label{i.dviral1}
for every $j\in\{1,\dots,  d-1\}$ there exists a periodic point
$Q_{j}$ whose multipliers $\la_j(Q)$ and $\la_{j+1}(Q)$ are
non-real and whose Lyapunov exponents satisfy $\chi_{k}(Q)\ne
\chi_j(Q)$ for all $k\ne j,j+1$,
\item\label{i.dviral2}
let $i$ be the $s$-index of $P$, if $j$ is different from $i$ then
the points $P$ and $Q_{j}$ are homoclinically related,
\item\label{i.dviral3}
if $j=i$ then $Q_i$ has $s$-index $i+1$ or $i-1$ and there are two
hyperbolic transitive sets $L$ and $K$ containing $P$ and $Q_{i}$
and having a robust heterodimensional cycle, and
\item \label{i.dviral4}
there are saddles $Q^+$ and $Q^-$ homoclinically related to $P$
such that
\begin{equation}\label{e.propV}
\chi_1(Q^-)+\chi_2(Q^-)<0 \quad \mbox{and} \quad
\chi_{d-1}(Q^+)+\chi_d(Q^+)>0.
\end{equation}
\end{enumerate}
\end{defi}

Note that the points $Q_j$ in the definition belong to the chain
recurrence class $C(P,f)$. This is obvious for the saddles $Q_j$,
$j\ne i$, that are homoclinically related to $P$. For the saddle
$Q_i$ this follows from  the existence of the hyperbolic
transitive sets $L$ and $K$ containing $P$ and $Q_{i}$ and related
by a heterodimensional cycle.

 Note also that properties $\fV$,
$\fV'$, and $\fV''$ (recall Definitions~\ref{d.propertyS} and
\ref{d.propertySprime}) are  open by definition. The next two
lemmas imply that these three
properties are equivalent ``open and densely".

\begin{lemm} \label{l.VimpliesS}
Consider a saddle $P$ and its chain recurrence class $C(P,f)$. If
Property $\fV''$ holds for $C(P,f)$ then Property $\fV$ holds for
$C(P,f)$. Moreover, if the  dimension $d\geq 4$, then property
$\fV'$ also holds for $C(P,f)$.
\end{lemm}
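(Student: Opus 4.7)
The plan is to deduce the two robust requirements of $\fV$ (and, for $d\geq 4$, the analogous second requirement of $\fV'$) directly from the three main clauses of $\fV''$. The core observation is that each of the distinguished periodic points supplied by $\fV''$ remains in $C(P_g,g)$ for all $g$ in a small $C^1$-neighborhood $\cU$ of $f$: for $j\neq i$ by $C^1$-robustness of the homoclinic relation with $P$ (which places the continuation $Q_j(g)$ in $H(P_g,g)\subseteq C(P_g,g)$), and for $j=i$ by $C^1$-robustness of the heterodimensional cycle between $L\ni P$ and $K\ni Q_i$, which keeps $P_g$ and $Q_i(g)$ in a common chain recurrence class.

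After shrinking $\cU$ so that additionally, for every $j$, the continuation $Q_j(g)$ still has $\lambda_j$ and $\lambda_{j+1}$ a non-real complex-conjugate pair with the remaining multipliers of strictly different modulus (a $C^1$-open spectral condition on a single periodic orbit, guaranteed by continuity of eigenvalues and clause~(\ref{i.dviral1}) of $\fV''$), the non-domination clause of $\fV$ follows at once: any dominated splitting of index $j$ on $C(P_g,g)$ would restrict to the orbit of $Q_j(g)$ and separate the two-dimensional $\RR$-irreducible eigenspace associated to $\lambda_j(Q_j(g))$, which is impossible. Applying this for every $j\in\{1,\dots,d-1\}$ rules out all dominated splittings on $C(P_g,g)$. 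The index variability clause is then immediate, since $Q_i(g)\in C(P_g,g)$ has $s$-index $i\pm 1\neq i$.

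For the implication $\fV''\Rightarrow \fV'$ when $d\geq 4$, I would split on the value of $i$, the $s$-index of $P$. If $i\in\{2,\dots,d-2\}$ then $P_g$ itself provides a saddle of $s$-index outside $\{1,d-1\}$. Otherwise $i\in\{1,d-1\}$; the participation of $Q_i$ in a robust heterodimensional cycle with $P$ forces $Q_i$ to be a genuine saddle (a sink or a source has chain recurrence class reduced to its own orbit, contradicting its inclusion in $C(P,f)$), so its $s$-index is $2$ when $i=1$ and $d-2$ when $i=d-1$. Since $d\geq 4$, both of these values lie in $\{2,\dots,d-2\}$, and the robust inclusion $Q_i(g)\in C(P_g,g)$ supplies the needed saddle. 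The whole argument is essentially a bookkeeping translation of the three clauses of $\fV''$ into robust versions of the conditions in $\fV$ and $\fV'$; the only slightly delicate point is the spectral continuity underlying the non-domination argument, which however is a standard fact about perturbation of eigenvalues of real matrices.
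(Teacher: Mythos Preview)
Your argument is correct and follows essentially the same route as the paper's proof: both use that the points $Q_j$ robustly lie in $C(P_g,g)$ (via the homoclinic relation for $j\neq i$ and the robust cycle for $j=i$) to rule out dominated splittings of every index, and both use $Q_i$ (of $s$-index $i\pm 1$) for the index-variability clause; for $\fV'$ when $d\geq 4$, the paper simply observes that either $P$ or $Q_i$ has $s$-index outside $\{1,d-1\}$, which is exactly your case split. Your write-up is more explicit about the spectral persistence and about why $Q_i$ must be a genuine saddle, but this is elaboration rather than a different method.
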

\begin{proof}
Let $i$ be the $s$-index of $P$ and denote by $Q_{j}$ the saddles
in Property $\fV''$.
 Condition (\ref{i.dviral1}) and the fact that
 $Q_{j}$ belongs to
$C(P,f)$ robustly implies that there is a neighborhood $\cV_{j}$
of $f$ such that, for all $h\in \cV_j$, the class $C(P_h,h)$
cannot have a dominated splitting $E\oplus F$ of index $j$. Since
this holds for all $j=1,\dots,d-1$, the non-domination condition
follows for the class $C(P_h,h)$ for every diffeomorphism $h\in
\cV=\cap_{j=1}^{d-1} \cV_{j}$.

The fact that $C(P_h,h)$ contains a saddle of $s$-index different
from $i$ for all $h\in \cV$ follows from condition
(\ref{i.dviral3}) after recalling that $Q_{i,h}\in C(P_h,h)$ and
that its $s$-index is $i\pm 1$.
In dimension $d\geq 4$, either $P$ or $Q_i$ has s-index different from $1$ and $d-1$.
\end{proof}

\begin{lemm} \label{l.SimpliesV}
Consider a saddle $P_f$ and its chain recurrence class $C(P_f,f)$.
Let  $\cV$ be an neighborhood of $f$ such that Property $\fV$
holds for $C(P_g,g)$ for all $g\in \cV$. Then there is an open and
dense subset $\cW$ of $\,\cV$ such that $C(P_g,g)$ satisfies
$\fV''$ for all $g\in \cW$. In dimension $d\geq 4$, the same holds
when $\fV$ is replaced by $\fV'$.
\end{lemm}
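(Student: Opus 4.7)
The plan is to show that for each $g\in\cV$ and each $\varepsilon>0$ we can find $g'\in\cV$ that is $\varepsilon$-close to $g$ and satisfies Property~$\fV''$. Since $\fV''$ is open, this gives the desired open dense subset $\cW$ of $\cV$. Throughout, we first restrict to a residual subset of $\cV$, coming from~\cite{BC:04} and Corollary~\ref{c.bdk}, on which the chain recurrence class $C(P_g,g)$ coincides with the homoclinic class $H(P_g,g)$, and in which saddles of different $s$-indices inside a chain recurrence class are linked by a chain of hyperbolic transitive sets with coindex-one robust heterodimensional cycles between consecutive ones.

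\textbf{Producing Conditions (1)--(3) of $\fV''$.} By Property~$\fV$, the class $C(P_g,g)$ robustly has no dominated splitting and contains a saddle whose $s$-index differs from that $i$ of $P$. Applying Corollary~\ref{c.bdk} and then Lemma~\ref{l.bodiki}, we may assume there exist transitive hyperbolic sets $L\ni P_g$ and $K\ni T$, with $T$ of $s$-index $i-1$ or $i+1$, having a robust coindex-one heterodimensional cycle. By the genericity of $g$, we moreover have $H(T,g)=C(T,g)=C(P_g,g)$, so $H(T,g)$ also has no dominated splitting of index $i$. Now apply Corollary~\ref{c.1} to $H(P_g,g)$ with the list of indices $j\in\{1,\dots,d-1\}\setminus\{i\}$ to obtain, after a single $\varepsilon$-perturbation, saddles $Q_j$ homoclinically related to $P_g$ with non-real $\la_j(Q_j),\la_{j+1}(Q_j)$ of equal modulus and other exponents distinct (property~$\fP_{j,j+1,\CC}$). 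Simultaneously, apply Corollary~\ref{c.1} to $H(T,g)$ with the single index $j=i$ (legal since $i$ differs from the $s$-index of $T$) to obtain a saddle $Q_i$ homoclinically related to $T$ satisfying $\fP_{i,i+1,\CC}$. All these adapted perturbations are supported in arbitrarily small neighbourhoods of pairwise disjoint periodic orbits, hence can be combined into one $\varepsilon$-perturbation, after which the robust heterodimensional cycle between $L$ and $K$ persists. This delivers (1), (2), and (3) of $\fV''$.

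\textbf{Producing Condition (4) and conclusion.} If $i\ge 2$, the saddle $Q_1$ from the previous step is homoclinically related to $P$ and has $\chi_1(Q_1)=\chi_2(Q_1)<0$, so we take $Q^-:=Q_1$; symmetrically, if $i\le d-2$ we take $Q^+:=Q_{d-1}$. In the extreme cases $i=1$ or $i=d-1$ these particular $Q_j$'s are unavailable (the corresponding index equals $i$), so we argue separately: the robust lack of a dominated splitting of index $1$ (respectively $d-1$) on $H(P_g,g)$, together with \cite{BDP:03} or an additional application of Propositions~\ref{p.gbobo} and~\ref{p.bobo}, provides saddles homoclinically related to $P_g$ whose Jacobian (resp. Jacobian of $f^{-1}$) can be made as small as desired, which yields the required $Q^-$ (resp. $Q^+$). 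The openness of $\fV''$ then gives an open dense subset $\cW\subset\cV$ as claimed. For the version with~$\fV'$: in dimension $d\ge 4$, condition (3) already provides a saddle $Q_i$ of $s$-index $i\pm 1$ in the class, so at least one of $P$ or $Q_i$ has $s$-index in $\{2,\dots,d-2\}$, which gives the index-variability required by~$\fV'$.

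\textbf{Main obstacle.} The delicate point is executing \emph{simultaneously} the many adapted perturbations from Corollary~\ref{c.1} while preserving the homoclinic relations with $P$ (or with $T$), and keeping the robust heterodimensional cycle between $L$ and $K$ intact. This is handled because adapted perturbations act in disjoint neighbourhoods of the orbits of the various $Q_j$'s, act trivially along those orbits, and by their definition preserve the prescribed homoclinic intersections through control of the compact sets $K^s, K^u$. The secondary difficulty is Condition~(4) in the extreme indices $i\in\{1,d-1\}$: here we cannot use the $Q_j$ produced in Step~1 (the relevant index coincides with $i$), and must instead extract a dissipative (resp.\ expanding) saddle homoclinically related to $P$ using the full strength of non-domination, which is precisely what \cite{BDP:03} and the Lyapunov-exponent manipulations of Section~\ref{s.lyapunovperiodic} provide.
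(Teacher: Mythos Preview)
Your overall strategy for items (1)--(3) of Property~$\fV''$ matches the paper's: use Corollary~\ref{c.1} on $H(P_g,g)$ for indices $j\ne i$, and on the class of a saddle $T$ of adjacent index for $j=i$, all while preserving a robust heterodimensional cycle. That part is fine.

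There are, however, two genuine gaps.

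\textbf{Condition (4) in the extreme cases $i\in\{1,d-1\}$.} Your appeal to Propositions~\ref{p.gbobo}/\ref{p.bobo} does not apply: both require $j\ne i$, and here the missing inequality concerns exactly the index $j=i$. The reference to~\cite{BDP:03} is too vague---those perturbations do not preserve the homoclinic relation with $P$, and ``making the Jacobian small'' concerns $\sum_k\chi_k$, not $\chi_1+\chi_2$. The paper's mechanism is different: when $i=1$, one already has (after perturbation) a saddle of index~$2$ in the class; one then creates a heterodimensional cycle between $P$ and this saddle and unfolds it via the arguments in~\cite{ABCDW:07} to produce a saddle $Q^-$ homoclinically related to $P$ with $\chi_2(Q^-)$ arbitrarily close to $0^+$ while $\chi_1(Q^-)$ remains uniformly negative, giving $\chi_1(Q^-)+\chi_2(Q^-)<0$.

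\textbf{The $\fV'$ case.} Your argument is circular: you invoke ``condition~(3)'' to obtain the saddle $Q_i$, but condition~(3) is part of the \emph{conclusion} $\fV''$, not of the hypothesis $\fV'$. The correct route is the one in the paper: starting from $\fV'$ (no domination, plus a saddle of index in $\{2,\dots,d-2\}$), apply Corollary~\ref{c.main} to that saddle to obtain a heterodimensional cycle and hence a saddle of a different index in the class. This yields Property~$\fV$, and one then invokes the first part of the lemma.
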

\begin{proof}
Assume that $C(P_g,g)$ satisfies  property $\fV$ for all $g\in
\cV$. Let $i$ be the $s$-index of $P_f$.
Proposition~\ref{p.complex} implies that there is an open and
dense subset $\cW'$ of $\cV$ such that for all $j\ne i$ and all
$g\in \cW'$ there is a saddle $Q_{j,g}$ of $s$-index $i$
homoclinically related to $P_g$ whose $j$-th multipliers and
exponents satisfy condition (\ref{i.dviral1}). This implies items
(\ref{i.dviral1}) and (\ref{i.dviral2}) in Property~$\fV''$ for
$j\ne i$.

In what follows we use some properties of $C^1$-generic
diffeomorphisms. Given two hyperbolic saddles $P_f$ and $Q_f$ of a
generic diffeomorphism $f$ then $C(P_f,f)=H(P_f,f)$. Moreover, if
$Q\in C(P,f)$ then there is a neighborhood $\cU$ of $f$ such that
$Q_g\in C(P_g,g)$ for all $g\in \cU$, see \cite{BC:04}.
Furthermore, if $H(P,f)$ contains saddles of $s$-indices $i<j$
then it contains a saddle of $s$-index $k$ for all $k\in (i,j)\cap
\NN$,  see \cite{ABCDW:07}.

By the comments above, after a perturbation, we can assume that
the saddle $Q_g$ in Property~$\fV$  has $s$-index $i\pm 1$ for all
$g\in
 \cW'$. Let us assume, for instance, that this index is $i+1$.
  Note that $C(P_g,g)=C(Q_g,g)$ and that, by hypothesis, this
 class has no dominated splitting. Arguing as above, but now
 considering the saddle $Q_g$ of $s$-index $i+1$, we get saddles
 $Q_g'$ homoclinically
related to $Q_g$ whose multipliers and exponents satisfy condition
(\ref{i.dviral1}) for $j=i+1$. By construction, these saddles are
robustly in the same chain recurrence class of $Q_g$ and therefore
in $C(P_g,g)$.

By Corollary~\ref{c.bdk}, there exists two hyperbolic transitive
sets $L$ and $K$ containing $P_g$ and $Q'_g$ with a robust
heterodimensional cycle. Taking $Q_{i,g}=Q_g'$ we get condition
(\ref{i.dviral1}) for $j=i$ and condition (\ref{i.dviral3}).

Observe that condition (\ref{i.dviral4}) is trivial if the
$s$-index of $P_f$ is $i\ne 1,d-1$. Suppose that the index is $1$
(the case $d-1$ is analogous). In this case every  saddle $Q^+$
homoclinically related to $P_f$ satisfies
$\chi_{d-1}(Q^+)+\chi_d(Q^+)>0$. Note  that, after a perturbation
if necessary, we can assume that the homoclinic class of $P_f$
contains saddles $Q_f$ of stable index $2$.
After a new perturbation, one gets a diffeomorphism $h$ with a 
heterodimensional cycle
associated to  $Q_h$ 
and $P_h$.  
By the arguments in  \cite{ABCDW:07} (see Corollary 2)
the unfolding of these cycle provides diffeomorphisms $g$
with a saddle $Q_g^-$ homoclinically related to $P_g$ whose
Lyapunov
exponent $\chi_2 (Q_g^-)$ is arbitrarily close to $0^+$ while
$\chi_1(Q_g^-)$ is negative and uniformly away from $0$.
In particular, one has $\chi_{1}(Q_g)+\chi_2(Q_g)<0$.
This proves
that property $\fV''$ holds for $g$.
\medskip

When $d\geq 4$, let us now assume that $C(p_g,g)$ satisfies
property $\fV'$ for all $g\in \cV$. Corollary~\ref{c.main} implies
that there is a  dense and open subset of  $\cV$ consisting of
diffeomorphisms $g$ such that there exists a hyperbolic periodic
point $Q_g$ in $C(P_g,g)$ with s-index different from the s-index
of $P$. In particular Property $\fV$ holds and for a smaller dense
and open subset $\fV''$ holds.
\end{proof}
\bigskip

Theorem~\ref{t.bviral} is now a consequence of the two lemmas
above and the following proposition.

\begin{prop}
[Viral contamination]
 \label{p.viral}
Consider $f\in \diff$ and a saddle $P$ of $f$. Assume that the
chain recurrence class $C(P,f)$ of $P$ satisfies Property $\fV''$.
Then for every neighborhood $V$ of $H(P,f)$
 there exist a diffeomorphism $g$ arbitrarily
$C^1$-close to $f$ and a hyperbolic periodic point $Q_g$ of $g$
such that:
\begin{enumerate}
\item \label{i.pviral1}
the
  orbit of $Q_g$ is
arbitrarily close to $H(P_f,f)$ for the Hausdorff distance,
\item \label{i.pviral3bis}
there is an open neighborhood $U\subset V$ of the orbit of $Q_g$
such that $P \not\in U$ and either $f(\overline U)\subset U$ or
$f^{-1}(\overline U)\subset U$, and
\item \label{i.pviral2}
 $C(Q_g,g)$ satisfies Property $\fV''$.
\end{enumerate}
\end{prop}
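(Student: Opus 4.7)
The plan is to construct, by a $C^1$-perturbation localized along a judiciously chosen periodic orbit, a new chain recurrence class that is trapped in a small tubular region $U$ disjoint from $P$ and that inherits from $C(P,f)$ all the structural features of Property~$\fV''$. The four conditions of $\fV''$ — non-real multipliers $Q_j$ at every index, the robust coindex-one heterodimensional cycle coming from $Q_i$, and the sectional contraction/expansion conditions carried by $Q^\pm$ — all serve as building blocks for the construction.

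\medskip
\noindent\emph{Step~1 (Dense weakly-hyperbolic orbit).} Since $C(P,f)$ satisfies $\fV''$, conditions (\ref{i.dviral1})--(\ref{i.dviral2}) imply that $H(P,f)$ has no dominated splitting of any index. I would first invoke Lemma~\ref{l.homocliniclyapunov} to select a saddle $R$ homoclinically related to $P$ whose Lyapunov exponents approximate those of $P$ and whose orbit is $\varepsilon$-dense in $H(P,f)$, with $\varepsilon$ small enough that an $\varepsilon$-tube around the orbit of $R$ still avoids $P$. Next, applying Proposition~\ref{p.weak} (in the direction consistent with whichever of $Q^+$ or $Q^-$ is used) along the orbit of $R$, I would produce an adapted perturbation that makes the hyperbolicity of $R$ arbitrarily weak in the pivotal central direction, while preserving the homoclinic relation with $P$ and with the saddles $Q_j,Q^\pm$.

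\medskip
\noindent\emph{Step~2 (Creation of the trapping region).} I would then apply the generalized Franks' Lemma~\ref{l.gourmelon} to tilt the derivative of $f$ along the (now weak) orbit of $R$, turning the near-neutral central exponents into mildly contracting ones (resp. mildly expanding, in the dual case). Because the starting exponents are close to zero, an arbitrarily small perturbation suffices. A thin tubular neighborhood $U$ of the orbit of $R_g$ then becomes forward-invariant (resp. backward-invariant) for $g$, with $P\notin U$ thanks to the $\varepsilon$-density choice of Step~1. The perturbation being supported in a small neighborhood of the orbit of $R$, it preserves all other homoclinic relations and, in particular, the robust heterodimensional cycle of condition $\fV''$(\ref{i.dviral3}).

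\medskip
\noindent\emph{Step~3 (Installing $\fV''$ inside $U$).} The $\varepsilon$-density of the orbit of $R$ guarantees that each saddle $Q_j$ (and $Q^\pm$) from Property $\fV''$ lies in $U$ together with transverse heteroclinic points linking it to $R_g$, so that after the perturbation the continuations $Q_{j,g},Q^\pm_g$ all belong to $C(R_g,g)$. The robust cycle of $\fV''$(\ref{i.dviral3}) persists by openness. The non-real multiplier conditions (\ref{i.dviral1})--(\ref{i.dviral2}) and the Jacobian conditions (\ref{i.dviral4}) are restored, if needed, by simultaneous adapted perturbations along the disjoint orbits of the various $Q_j,Q^\pm$, exactly as in the proof of Corollary~\ref{c.1}. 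Setting $Q_g=R_g$, this yields (\ref{i.pviral1})--(\ref{i.pviral2}) of the proposition.

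\medskip
\noindent\emph{Main obstacle.} The crux of the argument is Step~2: producing a trapping tube $U$ whose internal dynamics is rich enough to carry all the $\fV''$ structure. A naive perturbation turning $R$ into a sink would collapse $C(R_g,g)$ to a single periodic orbit, destroying the required non-dominated and heterodimensional features. The construction must therefore leave a nontrivial non-hyperbolic recurrent set inside $U$, preserving the transverse homoclinic intersections of $R_g$ with every $Q_{j,g}$ and $Q^\pm_g$ while still producing the filtration $f(\overline U)\subset U$ (or $f^{-1}(\overline U)\subset U$). This balance is precisely what the adapted-perturbation technology of Section~\ref{s.adaptedfranks}, combined with the weak-hyperbolicity reservoir provided by Proposition~\ref{p.weak}, is designed to deliver.
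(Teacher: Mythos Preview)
Your Step~3 contains a genuine gap that breaks the argument. You claim that the $\varepsilon$-density of the orbit of $R$ forces the saddles $Q_j$ and $Q^\pm$ from the original class to lie inside the trapping tube $U$, so that they belong to $C(R_g,g)$. This cannot work: the $Q_j$ with $j\ne i$ are homoclinically related to $P$, and if any such $Q_j$ were contained in a forward-invariant region $U$ with $P\notin U$, then the heteroclinic orbits joining $Q_j$ to $P$ would be forced to leave $U$, contradicting $g(\overline U)\subset U$. More simply, once $U$ separates the new class from $C(P_g,g)$, nothing homoclinically related to $P_g$ can be in $U$. The robust cycle and the non-real-multiplier saddles of the \emph{original} class stay with $P_g$, not with $R_g$. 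So the new class $C(R_g,g)$ inherits none of the $\fV''$ structure from $C(P,f)$; it must be rebuilt from scratch.

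This is exactly the obstacle you flag at the end, and your proposed resolution does not overcome it. The paper's strategy is different and worth contrasting. One first turns the chosen periodic point $Q$ into an actual sink (via Lemma~\ref{l.bgv}), which produces the attracting region $U$ but indeed collapses $C(Q_g,g)$ to a single orbit. The key trick is then to perturb \emph{again inside $U$}: one restores the original (non-dominated) cocycle over the orbit of $Q$, so that $Q$ is once more a saddle of index $i$ while the topological trap $U$ persists. Since the orbit of $Q$ itself carries no $k_0$-dominated splitting, Lemma~\ref{l.gdcds} creates a homoclinic tangency for $Q$ inside $U$, yielding a nontrivial $H(Q_g,g)$. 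Only then does one apply Corollary~\ref{c.1}, Theorem~\ref{t.main} and Corollary~\ref{c.main} to manufacture, inside this new class, fresh saddles $Q_{j,g}$ with non-real multipliers, a robust heterodimensional cycle, and the sectional Jacobian conditions. The point is that the lack of domination and the conditions on $\chi_1+\chi_2$, $\chi_{d-1}+\chi_d$ are carried by the single orbit of $Q$ (chosen via Claim~\ref{cl.hausdorff}), and this is all the raw material needed to regenerate $\fV''$ inside $U$.
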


Note that item (\ref{i.pviral3bis}) implies that $C(Q_g,g)$ is
disjoint from the chain-re\-cu\-rren\-ce class of $P_g$ (that
contains $H(P_g,g)$). Recall that property $\fV$ is robust. Thus
this proposition implies that Property $\fV''$ satisfies the
self-replication condition in Definition~ \ref{d.bviral}.

\subsection{Proof of
Proposition~\ref{p.viral}}\label{ss.proofofpviral}

We consider small $\pes>0$ and an upper bound $K$ of  the norms of
$Df$ and $Df^{-1}$. Let $k_0$ and $\ell_0$ be the constants associated
to $\pes$ and $K$ in Lemmas~\ref{l.gdcds} and \ref{l.bgv}. Let $i$
be the $s$-index of $P$. For clearness, we split the proof of the
proposition into six steps.

\smallskip

\noindent{\emph{Step I: Construction of the saddle $Q$.}} Consider
periodic points $Q^+$ and $Q^-$ as in equation \eqref{e.propV} in
Definition~\ref{d.propertyV}, i.e. $\chi_1(Q^-)+\chi_{2}(Q^-)<0$
and $\chi_{d-1}(Q^+)+\chi_{d}(Q^+)>0$.  Note that there exists a
locally maximal transitive hyperbolic set $\La_f$ such that
\begin{itemize}
\item $\La_f$ contains $P$, $Q^+$, and $Q^-$,
\item
$\La_f\subset H(P,f)$, and
\item
$\La_f$ is arbitrarily close to $H(P,f)$ for the Hausdorff metric.
\end{itemize}
In particular, the set $\La$ has no $k_0$-dominated splitting.

\begin{clai}\label{cl.hausdorff} There is  a
perturbation $g_0$ of $f$ such that the continuation $\La_{g_0}$
of $\La$ is the Hausdorff limit of the orbits of periodic points
$Q_{g_0}\in \La_{g_0}$ such that
\begin{equation}\label{e.clhausdorff}
\chi_1(Q_{g_0}) + \chi_2(Q_{g_0})<0 \quad \mbox{and} \quad
\chi_{d-1}(Q_{g_0}) + \chi_d(Q_{g_0})>0.
\end{equation}
Moreover, the set  $\La_{g_0}$ has no $k_0$-dominated splitting of
any index.
\end{clai}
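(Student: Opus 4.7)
The plan is to pick $g_0$ as a generic small perturbation of $f$ and then handle the two conclusions separately. I would first choose $g_0$ in the residual subset $\cG$ provided by Lemma~\ref{l.homocliniclyapunov}, $C^1$-close enough to $f$ that the continuations $P_{g_0}$, $Q^\pm_{g_0}$ and $\La_{g_0}$ are all defined, $\La_{g_0}$ remaining a locally maximal transitive hyperbolic set, and by continuity of Lyapunov exponents on hyperbolic continuations the inequalities $\chi_1(Q^-_{g_0})+\chi_2(Q^-_{g_0})<0$ and $\chi_{d-1}(Q^+_{g_0})+\chi_d(Q^+_{g_0})>0$ still hold.

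For the absence of a $k_0$-dominated splitting on $\La_{g_0}$, I would argue by contradiction via Grassmannian compactness. If, no matter how close $g_0$ is taken to $f$, the continuation $\La_{g_0}$ admitted a $k_0$-dominated splitting, then picking a sequence $g_n\to f$ one may, up to a subsequence, fix the index $j$ of the splitting and pass to a limit of the invariant subbundles through the continuation homeomorphisms $\La_{g_n}\to \La_f$. Since $Dg_n\to Df$ uniformly, the $k_0$-domination inequality passes to the limit and yields a $k_0$-dominated splitting of index $j$ on $\La_f$, contradicting the fact, already observed before the claim, that $\La_f$ has no such splitting. Hence $\La_{g_0}$ inherits the absence of any $k_0$-dominated splitting for $g_0$ close enough to $f$.

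For the Hausdorff approximation by periodic orbits meeting~\eqref{e.clhausdorff}, I would perform a case analysis on the $s$-index $i$ of $P$, noting that every saddle $Q\in \La_{g_0}$ has $s$-index $i$, so $\chi_i(Q)<0<\chi_{i+1}(Q)$. When $2\leq i\leq d-2$, both inequalities in~\eqref{e.clhausdorff} are automatic since $\chi_1\leq \chi_2\leq \chi_i<0$ and $\chi_{i+1}\leq \chi_{d-1}\leq \chi_d$ with $\chi_{i+1}>0$; a direct application of Lemma~\ref{l.homocliniclyapunov} to $P_{g_0}$ in $\La_{g_0}$ then gives, for each $\eta>0$, a saddle $Q_\eta\in \La_{g_0}$ with orbit $\eta$-dense in $\La_{g_0}$. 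When $i=1$ only the second inequality is automatic; to secure the first I would apply Lemma~\ref{l.homocliniclyapunov} to $Q^-_{g_0}$ rather than $P_{g_0}$, obtaining $Q_\eta\in \La_{g_0}$ whose orbit is $\eta$-dense and whose Lyapunov exponents are within $\eta$ of those of $Q^-_{g_0}$; continuity and the strict inequality $\chi_1(Q^-_{g_0})+\chi_2(Q^-_{g_0})<0$ then force $\chi_1(Q_\eta)+\chi_2(Q_\eta)<0$ for $\eta$ small. The case $i=d-1$ is symmetric, using $Q^+_{g_0}$. Letting $\eta\to 0$ in each case produces the required Hausdorff limit.

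The only subtle point is the domination transfer, which rests on the standard closure behavior of $k_0$-dominated splittings under $C^0$-limits of cocycles together with the continuity of hyperbolic continuations; the rest reduces to the observation that~\eqref{e.clhausdorff} is automatic on saddles of intermediate $s$-index, and to a single application of the genericity Lemma~\ref{l.homocliniclyapunov} in the two extremal cases $i\in\{1,d-1\}$.
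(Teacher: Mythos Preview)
Your proposal is correct and follows essentially the same approach as the paper: the same case split on the $s$-index $i$ (with~\eqref{e.clhausdorff} automatic for $2\le i\le d-2$ and handled via Lemma~\ref{l.homocliniclyapunov} applied to $Q^\pm$ in the extremal cases), and the same observation that the absence of $k_0$-domination on $\La_f$ passes to the nearby continuation $\La_{g_0}$. Your Grassmannian-compactness justification of that last point is more detailed than the paper's one-line appeal to the openness of $k_0$-dominated splittings, but it is the same argument.
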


\begin{proof}
If the $s$-index $i$ of $P$ belongs to $\{2,\dots,d-2\}$ then the
condition on the Lyapunov exponents holds for any saddle
homoclinically related to $P$. Thus it is enough to consider the
cases $i=1$ and $i=d$.

Let assume that $i=1$ (the case $i=d-1$ is similar). In this case,
$\chi_{d-1}(Q) + \chi_d(Q)>0$ for every saddle $Q$ that is
homoclinically related to $P$. Consider the saddle $Q^-\in \La$.
Taking a perturbation $g$ of $f$ in the residual set $\cG$ in
Lemma~\ref{l.homocliniclyapunov}, we can to ``spread" the property
$\chi_1(Q) +\chi_2(Q)<0$ over the hyperbolic set $\La_{g_0}$,
obtaining the point $Q_{g_0}$. This completes the first part of
the claim.

Since $g_0$ is close to $g$ and $\La_{g_0}$ is close to $\La$,
there is no $k_0$-dominated splitting over $\La_{g_0}$. This ends
the proof of the claim.
\end{proof}

By Lemma~\ref{l.dominatedclosure}, we can take the point $Q_{g_0}$
in Claim~\ref{cl.hausdorff} such that its orbit does not have any
$k_0$-dominated splitting, has period larger than $\ell_0$, and
its distance to the homoclinic class $H(P_{g_0},g_0)$ is
arbitrarily small. This completes the choice of the point
$Q=Q_{g_0}$.

\smallskip

\noindent{\emph{Step II: Separation of homoclinic classes.}} By
Lemma~\ref{l.bgv} there is an $\pes$-perturbation $g_1$ of $g_0$
supported on an arbitrarily small neighborhood of the orbit of
$Q_{g_0}$ such that the orbit of $Q_{g_1}$ is a sink or a source
for $g_1$. In what follows, let us assume that $Q_{g_1}$ is a
sink. Thus there is an open set $U\subset V$ containing the orbit
of $Q_{g_1}$ such that $g_1(\overline U)\subset U$ and $U$ is
disjoint from the homoclinic class of $P_{g_1}$. Note that these
properties hold for any diffeomorphism $g$ that is $C^0$-close to
$g_1$.
This implies item (\ref{i.pviral3bis}) in the proposition.

Recall that the choice of $Q$ and the neighborhood $U$ imply that,
for any perturbation $g$ of $g_1$, the homoclinic class $H(Q_g,g)$
is close to $H(P_f,f)$. This gives item (\ref{i.pviral1}) of the
proposition.

\smallskip

\noindent{\emph{Step III: Non-trivial homoclinic class of $Q$.}}
Note that after an $\pes$-per\-tur\-ba\-tion we can ``recover" the
original cocycle given by the derivative $Dg_0$ over the orbit of
$Q_{g_0}$, now defined over the orbit of $Q_{g_1}$. In particular,
there is no $k_0$-dominated splitting over the orbit of $Q_{g_1}$,
conditions in equation \eqref{e.clhausdorff} hold, and the saddle
$Q_{g_1}$ has $s$-index $i$. In what follows all perturbations $g$
we consider will preserve the cocycle over the  orbit of
$Q_{g_1}$. Hence the homoclinic class of  $Q_{g}$ will satisfy
item (\ref{i.dviral4}) in Property $\fV''$.

Finally, by Lemma~\ref{l.gdcds} and Remark~\ref{r.gdcds}, there is
an $\pes$-perturbation $g_2$ of $g_1$ supported on an arbitrarily
small neighborhood of the orbit of $Q_{g_1}$ such that the
homoclinic class of $Q_{g_2}$ is not-trivial.

\smallskip

\noindent{\emph{Step IV: No domination for the homoclinic class of
$Q$.}} Since there is no $k_0$-dominated splitting over the  orbit
of $Q_{g_1}$, by Corollary~\ref{c.1}, there is a
$\pes$-perturbation $g_3$ of $g_2$ such that for any $j\ne i$,
$j\in \{1,\dots, d-1\}$, there is a periodic point $Q_{j,g_3}$
homoclinically related to $Q_{g_3}$ that satisfies Property
$\fP_{j,j+1,\CC}$. In what follows, all perturbations that we will
perform  will preserve these properties. This implies that the
homoclinic class will satisfy items (\ref{i.dviral1}) and
(\ref{i.dviral2}) in the definition of Property~$\fV''$ for every
$j\ne i$.

Finally, for $j=i$, as the class $H(Q_{g_3},g_3)$ is not
$k_0$-dominated, using
 Lemma~\ref{l.gdcds} and
Remark~\ref{r.gdcds} we can generate a homoclinic tangency inside
the class after an $\pes$-perturbation $g_4$ of $g_3$. This
prevents the existence of a dominated splitting of index $i$ for
$g_4$.

Note that to complete the proof of the proposition it remains to
get items (\ref{i.dviral1}) for $j=i$ and (\ref{i.dviral3}) of
Property~$\fV''$.

\smallskip

\noindent{\emph{Step V: Generation of a robust heterodimensional
cycle.}} Recall that the homoclinic class $H(Q_{g_4},g_4)$ has no
any dominated splitting. There are three possibilities for the
$s$-index $i$ of $P$. If $i\in\{2,\dots,d-2\}$ we can apply
Corollary~\ref{c.main} to get $g_5$ close to $g_4$ with a robust
heterodimensional cycle associated to a hyperbolic set $L_{g_5}$
containing $P_{g_5}$ and a hyperbolic set $K_{g_5}$ of stable
index $i+1$ or $i-1$.

Assume now that $i=d-1$. Recall that $\chi_{d-1}(Q_{g_4}) +
\chi_d(Q_{g_4})>0$. Thus the hypotheses in Theorem~\ref{t.main}
are satisfied by $g_4$ and we get a diffeomorphism $g_5$ having a
robust heterodimensional cycle as before.

Finally, the case $i=1$ is analogous to the case $i=d-1$. Hence we
 obtain item (3) in Property~$\fV''$.

\smallskip

\noindent{\emph{Step VI: And finally Property~$\fV''$ holds.}}
Note that since the sets $L_{g_5}$ and $K_{g_5}$ have a robust
heterodimensional cycle, for all $g$ close to $g_5$ they are
contained in the same chain recurrence class. Thus by
\cite[Remarque 1.10]{BC:04} there is a residual subset $\cG'$ of
$\diff$ such that for every $f\in \cG'$, every periodic point of
$f$ is hyperbolic and its homoclinic and chain recurrence classes
coincide. In particular, for diffeomorphisms in $\cG'$ the
homoclinic classes of two periodic points either coincide or are
disjoint.

 Therefore for any $g_6\in \cG'$ close to
$g_5$ there is a periodic point $R_{g_6}\in K_{g_6}$ such that the
homoclinic classes $H(R_{g_6},g_6)$ and $H(Q_{g_6},g_6)$ coincide.
Hence the homoclinic class $H(R_{g_6},g_6)$ does not have any
$k_0$-dominated splitting of index $i$.

By Proposition~\ref{p.complex},
there is a saddle $Q_{i,g_6}$ homoclinically related to $R_{g_6}$
such that there is an $\pes$-perturbation of $g$ along the orbit
of $Q_{i,g_6}$ that is adapted to $H(R_{g_6},g_6)$ and to property
$\fP_{i,i+1,\CC}$. Since the perturbation is adapted, there is a
transitive hyperbolic set $K'_g$ containing $Q_{i,g}$ and $K_g$.
Thus the diffeomorphism $g$ has a robust heterodimensional cycle
associated to $L_g$ and $K'_g$. This ends the proof of the
proposition. \hfil \qed

\subsection{Proof of Corollary~\ref{c.bviral}} \label{ss.cviral}
Recall that the residual subset $\cG'$ of $\diff$ introduced in
Step VI consists of diffeomorphisms whose periodic points are all
hyperbolic. In particular, these diffeomorphisms have at most
countably many periodic points and hence countably many homoclinic
classes which are either disjoint or coincide.

By Lemma~\ref{l.SimpliesV}, there exists a dense open subset
$\cW\subset \cV$ such that $C(P_g,g)$ satisfies $\fV''$ for all
$g\in \cU$.

Recall that a filtrating neighborhood is an open set $U$ such that
$U=U_+ \cap U_-$ where $U_+$ and $U_-$ are open sets such that
$f(\overline{U_+}) \subset U_+$ and $f^{-1}(\overline{U_-})
\subset U_-$. Observe that there is
 filtrating neighborhood for the
chain recurrence class of  $Q_g$ separating this class and the
class of $P_g$. In particular, these two recurrence classes are
disjoint. Thus Theorem~\ref{t.bviral} allows to repeat this
process, generating new classes satisfying Property~$\fV''$.
Inductively, for each $n\in \NN$ we get an open and dense subset
$\cU_n$ of $\cU$ consisting of diffeomorphisms having (at least)
$n$ disjoint homoclinic classes. Therefore, the set
$$
\cG_\cU=\cG' \cap\ \bigcap_{n\in \NN} \cU_n
$$
is a residual subset of $\cU$ consisting of diffeomorphisms with
infinitely (countably) many homoclinic classes. This implies the
first part of the corollary.

To see that there are uncountably many chain recurrence classes
note that the first step of the construction provides two disjoint
filtrating open sets, the set $V_0=U$ containing the chain
recurrence class of $P_g=Q^0$ and the set $V_1$ containing the
chain recurrence class of $Q_g=Q^1$.

Repeating this process $n$ times, we can assume that for each map
$g\in \cG_\cU$  at each step we get $2^n$ open filtrating sets
$V_{i_1,\dots,i_n}$, $i_k=0,1$, that are pairwise disjoint and
nested (i.e. $V_{i_1,\dots,i_n}\subset V_{i_1,\dots,i_{n-1}}$),
and
 each set contains a  chain recurrence class with property
$\fV''$. Note that these classes are different and pairwise
disjoint.

Arguing inductively, we can repeat the construction of the first
step for every finite sequence $i_1,\dots,i_n$, getting a new pair
of filtrating neighborhoods $V_{i_1,\dots,i_{n},0}$ and
 $V_{i_1,\dots,i_n,1}$ contained in $ V_{i_1,\dots,i_n}$
and each of them containing a chain recurrence class satisfying
Property~$\fV''$.

Finally, for each infinite sequence $\iota=(i_k)$ consider the set
$$
K_\iota =\bigcap_{k=1}^\infty \overline{V_{i_1,\dots,i_k}}.
$$
By construction, each set $K_\iota$ contains some recurrent point
$X_\iota$ and given two different sequences $\iota$ and $\iota'$
the chain recurrence classes of $X_{\iota}$ and $X_{\iota'}$ are
different. Thus for $g\in \cG_U$ to each sequence $\iota$ we
associate a chain recurrent class $C(X_{\iota},g)$ and this map is
injective.

We have shown that every $g\in \cG_{U}$ has uncountably many chain
recurrence classes. Since, by the definition of $\cG'$, the
diffeomorphism $g$  has only countably many periodic points, there
are uncountably many aperiodic classes. This completes the proof
of the corollary. \hfil \qed

\subsection{Examples}\label{ss.examples}

We close this paper by providing examples of diffeomorphisms
satisfying viral properties that do not exhibit universal
dynamics.

\begin{prop}\label{p.nonempty}
Given any closed manifold $M$ of dimension $d\ge 3$ there is a
non-empty open set of diffeomorphisms having homoclinic classes
satisfying Property~$\fV$. Moreover, the open set can be chosen
such that the Jacobians of the diffeomorphisms are strictly less
than one over these homoclinic classes.
\end{prop}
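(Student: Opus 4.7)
The plan is to combine a classical construction of a robust heterodimensional cycle with the perturbation machinery developed in this paper to kill simultaneously every possible dominated splitting, while keeping the dynamics dissipative throughout. First, I would build the example on a reference manifold (say $\mathbb S^d$), and then transport it to an arbitrary closed $d$-manifold $M$ by a local perturbation supported in a small ball $B\subset M$, assuming the dynamics outside $B$ is strongly contracting towards an attracting fixed point. This guarantees that the Jacobian is globally less than $1$ outside a small piece, and one only has to control it over the homoclinic class built inside $B$.

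Inside $B$ I would start from a model diffeomorphism $f_{0}$, following Abraham--Smale / Bonatti--D{\'\i}az blender constructions, having two transitive hyperbolic sets $K$ and $L$ of stable indices $i$ and $i+1$ robustly related by a heterodimensional cycle. By scaling the model so that all its Lyapunov exponents are shifted towards $-\infty$ (compose with a strong homothety in the unstable directions or rescale the whole model), one may assume that all saddles in $K\cup L$ are uniformly dissipative. Picking a $C^{1}$-generic $f$ near $f_{0}$ one gets $H(P,f)=C(P,f)\supset K\cup L$ for $P\in K$, so that $H(P,f)$ robustly contains saddles of two consecutive stable indices $i$ and $i+1$; this yields the index-variability condition in Property $\fV$. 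Since the initial model can be taken strongly dissipative and all subsequent perturbations are $C^{1}$-small, the open condition ``Jacobian $<1$ on $H(P,f)$'' is preserved.

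To enforce robust lack of every dominated splitting I would apply Corollary~\ref{c.1} to $H(P,f)$: for every index $j\in\{1,\dots,d-1\}$ with $j\ne i$, the robust heterodimensional cycle between $K$ and $L$ already prevents the existence of a dominated splitting of index $j$ over $H(P,f)$ provided $j\in\{i-1,i,i+1\}$; for the remaining indices one argues analogously after noting that the model can be chosen without dominated splittings of any index (this is a standard feature of blender-like constructions, and can also be imposed by a preliminary perturbation using Lemma~\ref{l.gdcds} inside each hyperbolic piece). Corollary~\ref{c.1} then provides an arbitrarily $C^{1}$-small perturbation $g$, supported in a neighborhood of $H(P,f)$, together with saddles $Q_{j,g}$ homoclinically related to $P_g$ and satisfying property $\fP_{j,j+1,\mathbb C}$ for each such $j$. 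The presence of such a $Q_{j,g}$ robustly forbids any dominated splitting of index $j$ over $H(P_{h},h)$ for every $h$ close to $g$, and for $j=i$ the robust heterodimensional cycle inherited from the initial step already forbids it. Hence $C(P_{h},h)$ robustly carries no dominated splitting of any index.

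The main obstacles are twofold. First, one has to perform the perturbations needed to destroy the various dominated splittings \emph{simultaneously} without destroying the robust heterodimensional cycle or the dissipativity estimate; this is exactly what the adapted-perturbation framework of Section~\ref{s.adaptedfranks} is designed for, since the supports of the perturbations lie in disjoint small neighborhoods of disjoint periodic orbits. Second, one must verify that the whole construction produces a homoclinic class rather than just a chain-recurrence class satisfying the required properties; this is handled by choosing $g$ inside the generic set of \cite{BC:04}, where chain-recurrence classes of hyperbolic periodic points coincide with their homoclinic classes. The resulting open set, defined as the $C^{1}$-neighborhood on which Property $\fV$ and the inequality ``Jacobian $<1$'' both hold, is the one claimed by the proposition.
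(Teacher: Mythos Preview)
Your overall architecture --- build a local model with a robust heterodimensional cycle, make it dissipative, then kill every dominated splitting --- is the right one, and it matches the spirit of the paper's reference to \cite[Appendix~6]{BD:02}.  However, the mechanism you propose for destroying the dominated splittings has a genuine gap.

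You assert that the robust heterodimensional cycle between $K$ (index $i$) and $L$ (index $i+1$) ``already prevents the existence of a dominated splitting of index $j$ over $H(P,f)$ provided $j\in\{i-1,i,i+1\}$'', and that lack of domination is ``a standard feature of blender-like constructions''.  Both claims are false.  The classical blender examples (Bonatti--D{\'\i}az, Abraham--Smale type) are \emph{partially hyperbolic} with a one-dimensional center: they carry a dominated splitting $E^{ss}\oplus E^c\oplus E^{uu}$, hence dominated splittings of \emph{every} index except possibly the one corresponding to the center.  A robust heterodimensional cycle forbids uniform hyperbolicity, not domination.  Consequently your appeal to Corollary~\ref{c.1} is circular: that corollary takes as \emph{hypothesis} that $H(P,f)$ already has no $k_0$-dominated splitting of index $j$, which is exactly what you are trying to produce.

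The fix --- and this is what the construction in \cite{BD:02} actually does (see condition~(ii) of Property~$\fC$ recalled in Section~\ref{ss.viral}) --- is to build the non-real multipliers \emph{directly into the affine model}, not to obtain them by perturbation.  Concretely, when you specify the linear part of the return maps at the saddles generating $K$ and $L$, choose them so that for each $j\in\{1,\dots,d-1\}$ some saddle in $K\cup L$ has its $j$-th and $(j+1)$-th multipliers non-real and conjugate (with modulus distinct from the remaining multipliers).  This is a finite list of open linear conditions on finitely many matrices, compatible with the hyperbolicity and with any prescribed bound on the Jacobian; once imposed, the non-domination of every index is robust by construction, and no appeal to Corollary~\ref{c.1} is needed.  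The index $j=i$ is handled the same way, using a saddle in $L$ (whose $s$-index is $i+1\ne i$), so your separate argument for that case is unnecessary.
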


The construction follows arguing exactly as in \cite[Appendix
6]{BD:02}. Just note that in this case we do not assume the
existence of a pair of points $P'$ and $Q'$ with Jacobians less
and larger than one as in \cite{BD:02}. A different approach is to
consider perturbations of systems having {\emph{heterodimensional
tangencies}} as in \cite{DNP:06}.


\end{document}